\numberwithin{equation}{section}
\DeclareMathOperator{\cc}{c}
\DeclareMathOperator{\tr}{tr}
\DeclareMathOperator{\dist}{dist}
\DeclareMathOperator{\sign}{sign}
\newcommand{\rr}{\ensuremath{\mathbb{R}}}
\newcommand{\zz}{\ensuremath{\mathbb{Z}}}
\newcommand{\ep}{\ensuremath{\varepsilon}}
\newcommand{\bdry}{\ensuremath{\partial}}
\newcommand{\tG}{\ensuremath{\tilde{G}}}
\newcommand{\norm}[2]{\ensuremath{|#1|_{#2}}}
\newcommand{\semi}[2]{\ensuremath{[#1]_{#2}}}
\newcommand{\linfty}[2]{\ensuremath{||#1||_{L^{\infty}(#2)}}}
\newcommand{\tu}{\ensuremath{\tilde{u}}}
\newcommand{\bn}{\ensuremath{\bar{n}}}
\newtheorem{thm}{Theorem}[section]
\newtheorem{cor}{Corollary}[section]
\newtheorem{prop}{Proposition}[section]
\newtheorem{lem}{Lemma}[section]
\theoremstyle{definition}
\newtheorem{defn}[thm]{Definition}
\theoremstyle{remark}
\newtheorem{rem}[thm]{Remark}
\begin{document}
\title{On a model of a population with variable motility}
\author{Olga Turanova}
\address{Department of Mathematics, University of Chicago, 5734 S. University Avenue, Chicago, IL 60637}
\email{turanova@math.uchicago.edu}
\date{March 17, 2015}
\keywords{Reaction-diffusion equations, Hamilton-Jacobi equations, structured populations, asymptotic analysis}

\begin{abstract}
We study  a reaction-diffusion equation with a  nonlocal  reaction term that models a population with variable motility.  We establish a global supremum bound for solutions of the equation. 
We investigate the asymptotic (long-time and long-range) behavior of the population. We perform a certain rescaling and prove that solutions of the rescaled problem converge locally uniformly to zero in a certain region and stay positive (in some sense) in another region. These regions are determined by two viscosity solutions of a related Hamilton-Jacobi equation. 
\end{abstract}

\maketitle

\section{Setting and main results}
We study  a  reaction-diffusion equation with a nonlocal reaction term:
\begin{equation}
\tag{$E$}
\label{n}
\begin{cases}
\partial_t n= \theta \partial_{xx}^2 n+ \alpha \partial_{\theta \theta}^2n+r n(1-\rho) \text{ for }(x,\theta,t)\in \rr\times \Theta\times (0,\infty),\\
\rho(x,t)=\int_\Theta n(x,\theta,t)\, d\theta \text{ for }(x,t)\in \rr\times  (0,\infty), \\
\partial_\theta n(x,\theta_{m},t)=\partial_\theta n(x,\theta_{M},t)=0 \text{ for  }(x,t)\in \rr\times (0,\infty), \\
n(x,\theta,0)=n_0(x,\theta) \text{ for  }(x,\theta)\in \rr\times \Theta.
\end{cases}
\end{equation}
This problem, introduced by B\'{e}nichou, Calvez, Meunier, and Voituriez \cite{BCMV},  models a  population  structured by a space variable $x$ and a motility trait  $\theta\in \Theta$.  Our analysis is focused on the case where the trait space $\Theta$ is a bounded subset of $(0,\infty)$.  The parameters $\alpha$ and $r$ are positive and represent, respectively, the rate of mutation and the net reproduction rate in the absence of competition. 
The problem  (\ref{n}) is of  Fisher-KPP type. The classical Fisher-KPP equation, 
\begin{equation}
\tag{F-KPP}
\label{KPP}
\partial_t m(x,t) = \beta \partial_{xx}^2m(x,t)  +r m(x,t) (1-m(x,t) ) \text{ for } (x,t)\in\rr\times (0,\infty),
\end{equation}
describes the growth and spread of a population structured by a space variable $x$.  The diffusion coefficient $\beta$ is a positive  constant. The  behavior of solutions to (\ref{KPP}) has been widely studied, starting from its introduction in \cite{Fisher, KPP}. 
The most important difference between (\ref{n}) and the  Fisher-KPP equation is that the reaction term  in (\ref{n}) is nonlocal in the trait variable. This is because competition for resources, which is represented by the reaction term, occurs between individuals of \emph{all} traits that are present in a certain  location. 
 Another  key feature of (\ref{n}) is that the trait $\theta$  affects how fast an individual moves --  this is why the coefficient of the spacial diffusion in (\ref{n}) depends on $\theta$. In addition, the trait is subject to mutation, which is modeled by the diffusion term in $\theta$. Thus, (\ref{n}) describes the interaction between dispersion of a population and the evolution of the motility trait.  We further discuss the biological motivation for (\ref{n}) and review the relevant literature in more detail later on in the introduction.

\subsubsection*{Main results}
Throughout our paper,  we assume that the trait space $\Theta$ is a bounded interval
\[
\Theta =(\theta_{m}, \theta_{M}),
\]
 where $\theta_M$ and $\theta_{m}$ are positive constants.  We study  classical solutions of (\ref{n})  with initial condition that is non-negative and ``regular enough." 
We state these    assumptions precisely as (A\ref{asump C2}), (A\ref{asm n0}) and (A\ref{asm Theta})  in Section \ref{sec:asm}. 

A significant challenge for us is that (\ref{n})  does not enjoy the maximum principle. This is due to the presence of the nonlocal reaction term. Nevertheless, we are able to establish a global upper bound for solutions of  (\ref{n}). We prove:
\begin{thm}
\label{sup bd intro}
Suppose $n$  is nonnegative, twice differentiable on $\rr\times \Theta\times (0,\infty)$ and satisfies (\ref{n}) in the classical sense, with initial condition  $n_0$ that satisfies (A\ref{asm n0}). 
There exists a constant $C$ such that
\[
\sup_{\rr\times\Theta\times(0,\infty)} n \leq C.
\]
\end{thm}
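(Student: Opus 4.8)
The plan is to bound $n$ from above by a carefully chosen supersolution-like object, compensating for the absence of the maximum principle by exploiting the sign structure of the nonlocal term and a bootstrap on $\rho$. First I would derive an $L^\infty$-in-$x$, integral-in-$\theta$ control on $\rho$. Integrating the first equation in (\ref{n}) over $\theta\in\Theta$ and using the Neumann boundary conditions to kill the $\alpha\,\partial_{\theta\theta}^2 n$ term, one gets
\[
\partial_t \rho = \partial_{xx}^2\!\Big(\int_\Theta \theta\, n\, d\theta\Big) + r\rho(1-\rho).
\]
This is not closed because of the weight $\theta$, but since $\Theta\subset(0,\infty)$ is bounded, $\theta_m\,\rho \le \int_\Theta \theta n\, d\theta \le \theta_M\,\rho$, so the effective diffusion is comparable to that of $\rho$ itself; I would make this rigorous by working with $w(x,t):=\int_\Theta \theta\, n(x,\theta,t)\,d\theta$, which satisfies a Fisher-KPP-type inequality $\partial_t w \le \theta_M \partial_{xx}^2 w + r w\,(1 - \theta_M^{-1} w/\theta_M)$ — wait, more carefully: $\partial_t w = \theta_M \partial_{xx}^2 w - (\theta_M - \theta) \cdots$, so I would instead just use that $w$ is a subsolution of a genuine scalar KPP equation after bounding $\rho \ge \theta_M^{-1} w$, giving $\partial_t w \le \theta_M \partial_{xx}^2 w + r w(1 - c w)$ for $c = 1/(\theta_M^2)$ or similar. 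The scalar KPP comparison principle then yields $\sup_{x,t} \rho(x,t) \le K$ for a constant $K$ depending only on $r$, $\theta_M$, $\theta_m$, and $\|\rho(\cdot,0)\|_\infty$ (which is finite by (A\ref{asm n0})).

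With $\rho \le K$ in hand, the equation for $n$ becomes a \emph{linear} parabolic inequality with bounded zeroth-order coefficient: $\partial_t n \le \theta\,\partial_{xx}^2 n + \alpha\,\partial_{\theta\theta}^2 n + r(1 + K)\,n$, still with Neumann conditions in $\theta$ and now amenable to the maximum principle on the slab $\rr\times\Theta$. The obstacle is that $\rr$ is unbounded, so I cannot directly apply the maximum principle; I would handle this via a standard exhaustion argument using the Phragmén–Lindelöf principle for parabolic equations (the initial data $n_0$ is bounded, and solutions of linear parabolic equations with bounded data on $\rr\times\Theta$ do not grow superexponentially in $x$), or alternatively by noting that $e^{-r(1+K)t}n$ is a subsolution of the heat-type equation $\partial_t v = \theta\partial_{xx}^2 v + \alpha\partial_{\theta\theta}^2 v$ and invoking a representation/comparison with the bounded solution $\|n_0\|_\infty$. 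Either way, one gets $n(x,\theta,t) \le \|n_0\|_{L^\infty}\, e^{r(1+K)t}$ — but this grows in time, which is not yet the theorem.

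To remove the time growth, I would upgrade the argument: once $\rho$ is bounded, revisit the nonlocal term. The key point is that $1-\rho$ becomes negative whenever $\rho > 1$, which forces decay, while on the region where $\rho \le 1$ the growth is controlled. Concretely, I expect the cleanest route is to show $\sup_{x,\theta} n(x,\theta,t)$ satisfies (in the viscosity or barrier sense) an ODE inequality $\tfrac{d}{dt}N \le r N(1-\theta_m N/\theta_M) + (\text{correction})$ using that if $n$ is large at some point then $\rho$ there is at least $\theta_m n$... hmm, that requires a lower bound on the $\theta$-measure of where $n$ is comparable to its sup, which may fail. So instead I anticipate the actual argument builds a stationary (time-independent) supersolution $\bar n(x,\theta)$ of the linear problem $\theta\,\partial_{xx}^2\bar n + \alpha\,\partial_{\theta\theta}^2\bar n + 2r\bar n \ge 0$ on $\rr\times\Theta$ with Neumann conditions and $\bar n \ge \|n_0\|_\infty$ — e.g. $\bar n \equiv \text{const}$ does not work because of the $+2r\bar n$ term, so one needs $\bar n$ to oscillate in $\theta$; taking $\bar n(\theta) = A + B\cos(\sqrt{2r/\alpha}\,(\theta-\theta_m))$ with the frequency tuned so that Neumann conditions hold and $\bar n > 0$ requires the interval $\Theta$ to be short relative to $\sqrt{\alpha/r}$, which is presumably a hypothesis in (A\ref{asm Theta}). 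With such a $\bar n$, comparison on the linear inequality for $n$ (after the exhaustion in $x$) gives $n \le \bar n \le C$ globally, independent of $t$. The main obstacle, then, is the interplay between the unboundedness in $x$ (requiring a Phragmén–Lindelöf-type estimate to justify comparison) and the construction of the $\theta$-oscillating stationary supersolution under the smallness condition on $|\Theta|$; getting the closed KPP bound on $\rho$ despite the $\theta$-weight in its diffusion is the conceptual crux that makes everything else go through.
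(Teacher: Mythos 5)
Your first step---closing a scalar KPP inequality for $\rho$ or for $w=\int_\Theta\theta\,n\,d\theta$---is exactly the step that fails, and it is the crux of the whole problem. Integrating the equation against $\theta$ gives
\[
\partial_t w=\partial^2_{xx}\Big(\int_\Theta\theta^2\,n\,d\theta\Big)-\alpha\big(n(x,\theta_M,t)-n(x,\theta_m,t)\big)+r\,w(1-\rho),
\]
and the diffusion term is the second $x$-derivative of a \emph{different} moment of $n$. There is no pointwise comparison between $\partial^2_{xx}\int\theta^2 n\,d\theta$ and $\theta_M\,\partial^2_{xx}w$: even if $0\le f\le g$ pointwise, $f''$ and $g''$ bear no pointwise relation, because second derivatives have no sign. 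So $w$ is not a subsolution of any closed scalar KPP equation, and the bound $\sup\rho\le K$ does not follow. (This is precisely the obstruction the paper points out when contrasting (\ref{n}) with the constant-diffusion model (\ref{eqnBM}) of Bouin--Mirrahimi, where integrating in $\theta$ \emph{does} close up.) Moreover, the boundary term $-\alpha(n(\theta_M)-n(\theta_m))$ does not vanish and is only controlled by $\sup n$, which is circular. Your fallback constructions also do not close: the ODE for $\sup n$ needs a lower bound on the $\theta$-measure where $n$ is near its supremum (you correctly note this may fail), and the oscillating stationary supersolution requires a smallness condition on $\theta_M-\theta_m$ relative to $\sqrt{\alpha/r}$ that is \emph{not} part of (A\ref{asm Theta}).

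The paper's proof takes an entirely different route. It extends $n$ in $\theta$ by reflection to $\bn$ on $\rr\times\rr$, uses Schauder theory to get $\norm{\bn}{2+\eta}\le C(M^{2+\eta/2}+\norm{n_0}{2+\eta})$ where $M=\sup n$, and then studies the local averages $v(x,\zeta,t)=\int_{\zeta-\sigma/2}^{\zeta+\sigma/2}\bn\,d\theta$ for a width $\sigma$ chosen as a negative power of $M$. At a maximum point of $v$ one has $v_{xx}\le0$, and the troublesome term $\int a(\theta)\bn_{xx}\,d\theta$ is bounded by $\bar C\sigma^{2+\eta}M^{2+\eta/2}$ using the H\"older continuity of $\bn_{xx}$; combined with $\rho\ge v/2$ this yields $\sup v\le3$. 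An Aronson heat-kernel estimate then converts this $L^1$-in-$\theta$ bound into $M\le C/\sigma$, and the choice of $\sigma$ closes the loop and produces the contradiction. The interplay between the exponent $2+\eta/2$ and the width $\sigma$ is the real mechanism, not a comparison principle for $\rho$.
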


Theorem \ref{sup bd intro} is a key element in the proof of our second main result. Previous work on the Fisher-KPP equation \cite{EvansSouganidis, Friedlin} and formal computations concerning (\ref{n}) \cite{variablemotility} suggest that solutions to (\ref{n}) should converge to a travelling front in $x$ and $t$. In order to study the motion of the front, we perform the rescaling $(x,t)\mapsto (\frac{x}{\ep},\frac{t}{\ep})$.  This rescaling leads us to consider solutions $n^\ep$ of the $\ep$-dependent problem,
\begin{equation}
\tag{$E_\ep$}
\label{nep}
\begin{cases}
\ep \partial_t n^\ep = \ep^2\theta \partial^2_{xx} n^\ep+\alpha \partial_{\theta \theta} ^2n^{\ep}+rn^\ep(1-\rho^\ep) \text{ for }(x,\theta,t)\in \rr\times \Theta\times (0,\infty), \\
\rho^\ep(x,t)=\int_\Theta n^\ep(x,\theta,t)\, d\theta,  \text{ for }(x,t)\in \rr\times (0,\infty),\\
\partial_\theta n^\ep(x,\theta_{m},t)=\partial_\theta n^\ep(x,\theta_{M},t)=0 \text{ for all }(x,t)\in \rr\times (0,\infty), \\
n^\ep(x,\theta,0)=n_0(x,\theta)  \text{ for all }(x,\theta)\in \rr\times \Theta .
\end{cases}
\end{equation} 
(We point out that (\ref{nep}) is \emph{not} the rescaled version of the $\ep$-independent problem (\ref{n}), as we consider initial data $n_0(x,\theta)$, instead of $n_0(x/\ep,\theta)$, in (\ref{nep}). Please see Remark \ref{remark: relationship} for more about this.)

We study the limit of the $n^\ep$ as $\ep\rightarrow 0$. We find that there exists a set on which the sequence $n^\ep$ converges locally uniformly to zero, and another set on which a certain limit of $\int n^\ep \,d\theta $ stays strictly positive. These sets are determined by two viscosity solutions of the Hamilton-Jacobi equation, 
\begin{equation}
\tag{HJ}
\label{HJ}
\max\{u, \partial_tu-H(\partial_x u)\}=0.
\end{equation} 
The function  $H:\rr\rightarrow \rr$ arises from the eigenvalue problem (\ref{spectral})  and is determined  by  $\theta_m$, $\theta_M$, $r$, and $\alpha$ (see Proposition \ref{prop:spectral}). One may view the function $H$ as encoding the effect  of the motility trait on the limiting behavior of the $n^\ep$.  Our main result, Theorem (\ref{result on n}), says that  the Hamilton-Jacobi equation (\ref{HJ}) describes the motion of an interface which separates areas with and without individuals.

Viscosity solutions of (\ref{HJ}) with infinite initial data play a key role in our analysis and we provide a short appendix where we discuss the relevant known results. For the purposes of the introduction, we state the following lemma:
\begin{lem}
\label{lem intro}
For any $\Omega\subset\rr$, there exists a unique continuous function $u_{\Omega}$ that is a viscosity solution of (\ref{HJ}) in $\rr\times (0,\infty)$ and satisfies the initial condition
\begin{equation}
\label{infinite}
u_\Omega(x,0)= 
\begin{cases}
0 &\text{ for } x\in \Omega\\
-\infty &\text{ for } x \in \rr\setminus \bar{\Omega}.
\end{cases}
\end{equation}
In addition, we have $u_\Omega(x,t)\leq 0$ for all $x\in \rr$ and $t\in (0,\infty)$.
\end{lem}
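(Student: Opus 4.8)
The plan is to regard (\ref{HJ}) as the obstacle problem associated with the Hamilton--Jacobi equation $\partial_t u = H(\partial_x u)$ and the obstacle $u \le 0$, to build $u_\Omega$ by approximation, and to prove uniqueness via the comparison principle. Throughout I would use the properties of $H$ recorded in Proposition~\ref{prop:spectral}: $H$ is continuous, even, convex, superlinear, and $H(0) = r > 0$. In particular the constant function $0$ is itself a viscosity solution of (\ref{HJ}), which already yields the final assertion $u_\Omega \le 0$ once $u_\Omega$ has been constructed by comparison with $0$.

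\emph{Existence.} For each $k \ge 1$ let $g_k(x) := 0$ for $x \in \bar\Omega$ and $g_k(x) := \max\{-k,\, -k\,\dist(x,\bar\Omega)\}$ otherwise, so that $g_k$ is bounded and Lipschitz and $g_k \downarrow$ the datum in (\ref{infinite}). The classical theory (recalled in the appendix) provides, for each $k$, a unique bounded uniformly continuous viscosity solution $u_k$ of (\ref{HJ}) on $\rr\times[0,\infty)$ with $u_k(\cdot,0) = g_k$; comparison with the solution $0$ gives $u_k \le 0$, and comparison of $u_{k+1}$ with $u_k$ gives that $k \mapsto u_k$ is nonincreasing. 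For a lower barrier I would use the Hopf--Lax--type function $v(x,t) := -t\inf_{y\in\bar\Omega} L\bigl(\tfrac{x-y}{t}\bigr)$, where $L$ denotes the Legendre transform of $H$: since $H$ is convex and superlinear, $L$ is finite, convex and superlinear, $v$ is the viscosity solution of $\partial_t v = H(\partial_x v)$ with datum (\ref{infinite}), its truncation $\min\{0,v\}$ is a viscosity solution of (\ref{HJ}), and $\min\{0,v\} \le u_k$ for every $k$ by comparison. Hence $u_\Omega := \inf_k u_k = \lim_k u_k$ is finite and satisfies $\min\{0,v\} \le u_\Omega \le 0$. Continuity of $u_\Omega$ on $\rr\times(0,\infty)$ then follows by the half-relaxed-limit method: the upper relaxed limit $\bar u$ of the $u_k$ is a subsolution of (\ref{HJ}), the lower relaxed limit equals $u_\Omega$ and is a supersolution, $\bar u \ge u_\Omega$ trivially, and the comparison principle forces $\bar u \le u_\Omega$; thus $u_k \to u_\Omega$ locally uniformly and $u_\Omega$ is continuous. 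That (\ref{infinite}) is attained is read off from the barriers: estimates of the form $g_k \le u_k$ near $t=0$ give $u_\Omega(\cdot,0) = 0$ on $\Omega$, while $\min\{0,v\} \le u_\Omega$ together with the blow-down $v(x,t) \to -\infty$ as $(x,t)\to(x_0,0)$ for $x_0 \notin \bar\Omega$ (a consequence of the superlinearity of $L$) gives $u_\Omega(x,t)\to -\infty$ there. Alternatively, $u_\Omega$ may be produced directly by Perron's method applied to the class of subsolutions of (\ref{HJ}) lying below $0$ with the prescribed initial trace.

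\emph{Uniqueness.} Let $u$ be any continuous viscosity solution of (\ref{HJ}) on $\rr\times(0,\infty)$ attaining (\ref{infinite}). On the one hand $u$ is an admissible competitor for the Perron construction above (it lies below $0$ and has the right initial trace), so $u \le u_\Omega$; equivalently, $u(\cdot,0) \le g_k$ and comparison give $u \le u_k$ for all $k$, hence $u \le u_\Omega$. On the other hand, $u_\Omega$ and $u$ are, respectively, a sub- and a supersolution of (\ref{HJ}) whose initial data agree (both equal $0$ on $\Omega$ and decrease to $-\infty$ on $\rr\setminus\bar\Omega$), so the comparison principle for (\ref{HJ}) with such initial data, as stated in the appendix, yields $u_\Omega \le u$. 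Therefore $u = u_\Omega$.

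I expect the main obstacle to be the behavior at $t = 0$: making precise, and then exploiting, the sense in which the value $-\infty$ is attained on $\rr\setminus\bar\Omega$. A supersolution a priori carries no information there, so the comparison step above must be supplemented by barriers forcing every solution of (\ref{HJ}) to decrease to $-\infty$ near exterior points; this is exactly where the superlinearity of $H$ (equivalently, finiteness and superlinear growth of $L$) is used, and it is the content that the appendix is designed to supply. In the paper I would therefore cite the appendix for the comparison principle and the solvability of (\ref{HJ}) with infinite initial data, and confine the argument here to assembling the pieces above.
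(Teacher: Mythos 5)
Your route is genuinely different from the paper's. The paper proves this lemma by pure reduction: after the sign change $w\mapsto -w$, $\tilde{H}(p)=H(-p)$, the statement becomes exactly the existence and comparison results of \cite[Section 4]{BarlesEvansSouganidis}, and this translation is Proposition \ref{prop:prelem} of the appendix; no construction is carried out in the paper itself. Your scheme (approximate the datum by Lipschitz functions $g_k$ decreasing to (\ref{infinite}), compare, pass to half-relaxed limits) is a legitimate constructive alternative, closer in spirit to what Barles--Evans--Souganidis actually do, and the derivation of $u_\Omega\le 0$ by comparison with the solution $0$ is correct.

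Two steps have genuine gaps. First, you assert that $\min\{0,v\}$, the truncation of the Hopf--Lax solution of the unconstrained equation, is a viscosity solution (in particular a subsolution) of (\ref{HJ}). The minimum of two subsolutions of a first-order equation is \emph{not} in general a subsolution: at a point of the free boundary $\{v=0\}$ where $v$ is differentiable, the superdifferential of $\min\{0,v\}$ contains the whole segment $\{\mu(v_t,v_x):\mu\in[0,1]\}$, so the subsolution inequality requires $\mu H(v_x)-H(\mu v_x)\le 0$ for all $\mu\in[0,1]$. For a general convex $H$ and a general level set this fails; it holds here only because on $\{v=0\}$ one has $|v_x|=\lambda^*$ with $H(\lambda^*)=\lambda^* c^*$, while $H(q)\ge |q|c^*$ for every $q$ by (\ref{Hc*}). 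This verification (the ``condition (N)'' computation) is precisely the content you cannot skip, and points where $v$ has a kink on $\{v=0\}$ need a separate check. Second, your argument that $u_\Omega(x,t)\to-\infty$ as $(x,t)\to(x_0,0)$ for $x_0\notin\bar\Omega$ is backwards: the inequality $\min\{0,v\}\le u_\Omega$ with $v\to-\infty$ is a degenerating \emph{lower} bound and gives no information. What is needed is the upper bound $u_\Omega\le u_k$ together with continuity of $u_k$ up to $t=0$, which yields $\limsup u_\Omega\le g_k(x_0)\to-\infty$ as $k\to\infty$; you have this ingredient but deploy the wrong barrier. Finally, note that every comparison you invoke between functions whose initial traces involve $-\infty$ is exactly the nonstandard comparison principle of Proposition \ref{prop:prelem}, which the paper in turn obtains from \cite[Section 4]{BarlesEvansSouganidis}; so even after the fixes above your argument is not self-contained at that point.
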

We are interested in $u_\Omega$ for  two sets $\Omega$ determined by the initial data $n_0$. We define these two sets, $J$ and $K$, by
\begin{equation}
\label{JK}
\begin{split}
&J = \{ x\in \rr:\   \text{ there exists }\theta\in \Theta \text{ such that } n_0(x,\theta)>0\} \\
&\text{ and } \\
&K = \{ x\in \rr:\   n_0(x,\theta)>0 \text{ for all }\theta\in \bar{\Theta}\}.
\end{split}
\end{equation}
We see that $x$ belongs to $J$ if initially there is at least \emph{some} individual living at $J$, and $x$ belongs to $K$ if individuals with \emph{all} traits are present at $x$. 

Our main result says that the limiting behavior of the $n^\ep$ is determined by $u_J$ and $u_K$:
\begin{thm}
 \label{result on n}
Assume   (A\ref{asump C2}), (A\ref{asm n0}) and (A\ref{asm Theta}). Let $u_J$ and $u_K$ be the functions  given by Lemma \ref{lem intro}. Then,
\[
 \lim_{\ep\rightarrow 0} n^\ep =0\text{ uniformly on compact subsets of } \{u_J<0\} \times \Theta
\]
and
\begin{align*}
 \limsup_{\ep\rightarrow 0}{ }^* \rho^\ep(x,t)&= \lim_{\ep\rightarrow 0} \sup \{\rho^{\ep'}(y,s):\,  \ep'\leq \ep,  |y-x|,  |t-s|\leq \ep \} \\&
\geq 1  \text{ on the interior of }\{u_K=0\}.
\end{align*}
\end{thm}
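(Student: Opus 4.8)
\emph{Strategy and preliminaries.} I would prove both assertions of Theorem~\ref{result on n} along the geometric-optics lines of Freidlin and Evans--Souganidis, using Proposition~\ref{prop:spectral} to pass from the trait variable to the Hamiltonian $H$. Set
\[
\bar w:=\limsup_{\ep\to0}{}^{*}(\ep\ln\rho^\ep),\qquad \underline w:=\liminf_{\ep\to0}{}_{*}(\ep\ln\rho^\ep).
\]
By Theorem~\ref{sup bd intro}, $\rho^\ep\le C|\Theta|$, so $\bar w\le0$ and $\underline w\le0$ on $\rr\times[0,\infty)$. A key preliminary is a Harnack-type estimate, uniform in $\ep$, for the equation for $n^\ep$ (which is uniformly parabolic in $(\theta,t)$ after rescaling $t\mapsto t/\ep$): it bounds $\sup_\theta n^\ep(x,\cdot)$ by $\inf_\theta n^\ep(x,\cdot)$ on slightly earlier time slices, hence by $\rho^\ep$, so that $\ep\ln n^\ep$ and $\ep\ln\rho^\ep$ have the same half-relaxed limits $\bar w,\underline w$, which are $\theta$-independent. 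Establishing this estimate --- around the failure of the maximum principle for (\ref{nep}) and the degeneracy of the $x$-diffusion --- is where I expect the main difficulty to lie.

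\emph{Convergence to zero.} The plan is to show $\bar w$ is a viscosity subsolution of (\ref{HJ}). The obstacle inequality $\bar w\le0$ is clear. For the equation, at a strict maximum of $\bar w-\phi$ I use the perturbed test function $\phi(x,t)+\ep\ln Q(\theta;\partial_x\phi)$, with $Q$ the principal eigenfunction of (\ref{spectral}): substituting nearby maxima of $\ep\ln n^\ep$ minus this function into (\ref{nep}), dropping the nonpositive term $-r\rho^\ep n^\ep$, and letting $\ep\to0$ with the spectral identity gives $\partial_t\phi-H(\partial_x\phi)\le0$. Since $n_0$ vanishes for $x\notin\bar J$, an explicit Gaussian supersolution of a linearization of (\ref{nep}) --- expressing finite speed of propagation in the limit --- shows $\ep\ln\rho^\ep\to-\infty$ near such $x$ as $t\to0^+$, so $\bar w(\cdot,0)$ lies below the datum (\ref{infinite}) of $u_J$. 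The comparison principle for (\ref{HJ}) with obstacle data (see the appendix and Lemma~\ref{lem intro}) then forces $\bar w\le u_J$ on $\rr\times(0,\infty)$. On a compact $\mathcal K\subset\{u_J<0\}$ one has $\bar w\le-c<0$, so $\rho^\ep\le e^{-c/(2\ep)}$ and, by the Harnack estimate, $n^\ep\to0$ uniformly on compact subsets of $\{u_J<0\}\times\Theta$.

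\emph{Persistence, step one.} Symmetrically, I would show $\underline w$ is a viscosity supersolution of (\ref{HJ}): the obstacle inequality is automatic where $\underline w=0$, and at a minimum $\hat z$ of $\underline w-\phi$ with $\underline w(\hat z)<0$ the perturbed test function argument applies again --- crucially, the nonlocal factor $1-\rho^\ep$ is evaluated at space--time points where $\ep\ln\rho^\ep\to\underline w(\hat z)<0$, hence where $\rho^\ep\to0$, so the reaction contributes the full $r$ and one obtains $\partial_t\phi-H(\partial_x\phi)\ge0$ at $\hat z$. Since $n_0\ge c>0$ on compact subsets of $K\times\bar\Theta$, $\underline w(\cdot,0)\ge0$ on $\bar K$ (while the datum of $u_K$ is $-\infty$ off $\bar K$), so the comparison principle gives $\underline w\ge u_K$ on $\rr\times(0,\infty)$. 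On $\operatorname{int}\{u_K=0\}$ this and $\bar w\le0$ force $\bar w=\underline w=0$, i.e.\ $\ep\ln\rho^\ep\to0$ locally uniformly, and by the Harnack estimate $\ep\ln n^\ep\to0$ locally uniformly on $(\operatorname{int}\{u_K=0\})\times\bar\Theta$.

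\emph{Persistence, step two.} Fix $(x_0,t_0)\in\operatorname{int}\{u_K=0\}$ and suppose for contradiction that $\limsup^{*}\rho^\ep(x_0,t_0)<1$. As the half-relaxed upper limit is upper semicontinuous, there are $\delta>0$ and a fixed cylinder $D=B_R(x_0)\times\bar\Theta\times(t_0-s,t_0)\subset(\operatorname{int}\{u_K=0\})\times\bar\Theta\times(0,\infty)$ with $\rho^\ep\le1-2\delta$ on $D$ for all small $\ep$; there $n^\ep$ is a supersolution of the \emph{linear} operator $\ep\partial_t v-\ep^2\theta\,\partial_{xx}^2v-\alpha\,\partial_{\theta\theta}^2v-2r\delta v$, which \emph{does} obey a comparison principle. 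I would compare $n^\ep$ from below with $\underline n^\ep(x,t)=\varphi(x)\exp\!\big(\tfrac1\ep(-\eta+r\delta(t-t_0+s))\big)$, with $\varphi(x)=\cos(\kappa(x-x_0))$ on $B_R$ and $\kappa=\pi/(2R)$: once $\ep$ is small enough that $\ep^2\theta_M\kappa^2\le r\delta$, this $\underline n^\ep$ is a subsolution of the same operator, has vanishing $\theta$-derivative at $\theta_m,\theta_M$, vanishes on $\partial B_R$, and satisfies $\underline n^\ep\le e^{-\eta/\ep}\le n^\ep$ at $t=t_0-s$ by step one (once $\eta$ is fixed and $\ep$ small). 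Hence $n^\ep\ge\underline n^\ep$ on $D$, so $n^\ep(x_0,\theta,t_0^{-})\ge e^{(r\delta s-\eta)/\ep}$; taking $\eta<r\delta s$ makes the right-hand side tend to $\infty$ as $\ep\to0$, contradicting Theorem~\ref{sup bd intro}. Therefore $\limsup^{*}\rho^\ep(x_0,t_0)\ge1$, completing the proof.
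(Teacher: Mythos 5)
Your overall strategy is sound and, for the second conclusion, genuinely different from the paper's. The paper proves $\limsup^*\rho^\ep\geq 1$ on $\mathrm{int}\{u_K=0\}$ directly: since $\underline{u}\equiv 0$ there, the function $u^\ep-\phi$ with $\phi(x,t)=-(x-x_0)^2-(t-t_0)^2$ has approximate minima $(x_\ep,\theta_\ep,t_\ep)\to(x_0,t_0)$, and plugging the derivative inequalities into ($U_\ep$) yields $r\rho^\ep(x_\ep,t_\ep)\geq r-\phi_t+\ep\theta\phi_{xx}\to r$ with no contradiction argument and no auxiliary subsolution. Your route --- assume $\rho^\ep\le 1-2\delta$ on a fixed cylinder, observe that $n^\ep$ then supersolves a linear operator with zeroth-order coefficient $2r\delta$, and push up an explicit $\cos(\kappa(x-x_0))\,e^{(-\eta+r\delta(t-t_0+s))/\ep}$ bump until it violates the global supremum bound --- is the classical Freidlin/Evans--Souganidis mechanism and is correct as far as it goes; it costs you the extra input that $n^\ep\ge e^{-\eta/\ep}$ on the bottom of the cylinder, which you must extract from step one. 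The paper's version is shorter and needs only $\underline{u}\equiv 0$ near the point.

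There are, however, two places where your sketch leans on claims that are not established and are not quite what the paper proves. First, the ``Harnack-type estimate, uniform in $\ep$,'' bounding $\sup_\theta n^\ep$ by $\inf_\theta n^\ep$: a uniform-in-$\ep$ multiplicative Harnack constant is not proven in the paper and is doubtful, because the $\theta$-diffusion scales like $\alpha/\ep$ while the reaction and $x$-diffusion do not. What the paper actually provides (Proposition \ref{bd on utheta}) is $|u^\ep_\theta|\le C\sqrt{\ep}$, i.e.\ a Harnack constant of size $e^{C/\sqrt{\ep}}$; this is harmless on the $\ep\ln$ scale and suffices for every use you make of the Harnack estimate (identifying the half-relaxed limits of $\ep\ln n^\ep$ and $\ep\ln\rho^\ep$, and converting $\ep\ln\rho^\ep\to 0$ into $n^\ep\ge e^{-\eta/\ep}$), so this gap closes by citation --- but as stated your estimate is stronger than what is available. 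Second, the assertion ``since $n_0\ge c>0$ on compact subsets of $K\times\bar\Theta$, $\underline{w}(\cdot,0)\ge 0$ on $\bar K$'' is a genuine gap: the half-relaxed lower limit at $t=0$ is taken over positive times $s\downarrow 0$, and positivity of the initial datum controls only the $s=0$ slice. Ruling out a downward jump of $\underline{w}$ as $t\to 0^+$ is precisely the content of the second step of the paper's proof of (\ref{eqn:u under at 0}) in Subsection \ref{sec:at time 0}, which requires the supersolution property of the variational inequality up to $t=0$ together with the quadratic-in-$x$ test functions and the bound $H(\lambda)\le\theta_M\lambda^2+r$. Since that is Part (II) of Proposition \ref{main result}, already proven in the paper, your argument is repairable by invoking it rather than re-deriving it in one line; the same remark applies to your Gaussian-supersolution sketch of the initial trace of $\bar w$ off $\bar J$.
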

Let us remark on a special case of Theorem \ref{result on n}. Suppose the initial data $n_0$ is such that the two sets $J$  and $K$ are equal (this occurs if, for example, $n_0$ is independent of $\theta$). In this case, $u_J=u_K\leq 0$ and  so $\rr = \{u_J<0\} \cup \{u_K=0\}$, which means  that Theorem \ref{result on n} gives information about the limiting behavior of $n^\ep$ \emph{almost everywhere} on $\rr$, for all times $t$.

We present the following corollary: 
\begin{cor}
\label{corc*informal}
Assume   (A\ref{asump C2}), (A\ref{asm n0}) and (A\ref{asm Theta}). Let us also suppose that each of $J$ and $K$ is a bounded interval. There exists a positive constant $c^*$, which depends only on $\alpha$, $r$, $\theta_m$ and $\theta_M$, such that 
\begin{itemize}
\item if  $\dist(x, J)>tc^*$, then $\displaystyle \lim_{\ep\rightarrow 0} n^\ep (x,\theta,t)=0$ for all $\theta\in \Theta$; and,
\item if $\dist(x, K)<tc^*$, then $\displaystyle \limsup_{\ep\rightarrow 0}{ }^* \rho^\ep(x,t)\geq 1$.
\end{itemize}
%If, in addition, $J=K$, then we  have a partial converse:
%\begin{itemize}
%\item If   $\displaystyle \lim_{\ep\rightarrow 0} n^\ep (x,\theta,t)=0$ for all $\theta\in \Theta$, then $\dist(x, J)\geq tc^*$.
%\item If  $\displaystyle \limsup_{\ep\rightarrow 0}{ }^* \rho^\ep(x,t)\geq 1$, then $\dist(x, J)\geq tc^*$.
%\end{itemize}
\end{cor}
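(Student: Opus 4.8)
The plan is to deduce Corollary \ref{corc*informal} from Theorem \ref{result on n} by identifying, for the special geometry where $J$ and $K$ are bounded intervals, the exact shape of the regions $\{u_J<0\}$ and $\{u_K=0\}$ in terms of a single speed $c^*$. First I would record the basic structural properties of $H$ from Proposition \ref{prop:spectral}: it is even, convex, superlinear, with $H(0)>0$, so that the Lax--Oleinik/front picture for \eqref{HJ} is the standard KPP one. In particular I expect $H$ to satisfy the hypotheses under which the viscosity solution $u_\Omega$ of \eqref{HJ} with infinite initial datum on an interval $\Omega=(a,b)$ is given explicitly by a variational (Hopf--Lax type) formula, and in fact is a function only of $t$ and $\dist(x,\Omega)$ for $x\notin\Omega$. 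The natural candidate is
\[
u_{(a,b)}(x,t)=\min\Bigl\{0,\ \sup_{|p|\le p_0}\bigl(-\dist(x,(a,b))\,p + t\,(H(p)-H(0)\cdot 0)\bigr)\Bigr\},
\]
or more cleanly, using that along the relevant optimal trajectories $u$ vanishes exactly where $\dist(x,(a,b)) = t\,c^*$ with
\[
c^* := \min_{p>0} \frac{H(p)}{p},
\]
the classical KPP spreading speed associated with the Hamiltonian $H$. I would cite the appendix on \eqref{HJ} with infinite initial data for the precise statement that $\{u_{(a,b)}=0\}$ is the closed $tc^*$-neighborhood of $(a,b)$ and $\{u_{(a,b)}<0\}$ is its complement; the constant $c^*$ depends only on $H$, hence only on $\alpha,r,\theta_m,\theta_M$ by Proposition \ref{prop:spectral}.

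Granting that description, the corollary is almost immediate. If $\dist(x,J)>tc^*$ then $(x,t)\in\{u_J<0\}$; choosing a compact neighborhood of $(x,t)\times\{\theta\}$ still inside $\{u_J<0\}\times\Theta$ (possible since $\{u_J<0\}$ is open and $\dist(x,J)>tc^*$ is a strict, hence stable, inequality), Theorem \ref{result on n} gives $n^\ep\to 0$ uniformly there, in particular $\lim_{\ep\to0}n^\ep(x,\theta,t)=0$ for every $\theta\in\Theta$. Conversely, if $\dist(x,K)<tc^*$ then $(x,t)$ lies in the \emph{interior} of $\{u_K=0\}$ — again because the inequality is strict and $\dist(\cdot,K)$, $t$ are continuous — so the second assertion of Theorem \ref{result on n} yields $\limsup_{\ep\to0}{}^*\rho^\ep(x,t)\ge 1$. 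The only mild point to check is that $\dist(x,K)<tc^*$ really places $(x,t)$ in the topological interior of $\{u_K=0\}$ and not merely in the set itself: this follows since $\{u_K=0\}$ contains the closed $tc^*$-neighborhood of $K$ at each fixed time and these neighborhoods grow with $t$, so a strict inequality at $(x,t)$ persists in a full space-time ball.

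I expect the main obstacle to be purely the identification step: pinning down that for an interval $\Omega$ the solution $u_\Omega$ furnished by Lemma \ref{lem intro} has zero set exactly $\{(x,t):\dist(x,\Omega)\le tc^*\}$ with $c^*=\min_{p>0}H(p)/p$. This requires either invoking a Hopf--Lax representation for \eqref{HJ} with the $\max\{u,\partial_t u - H(\partial_x u)\}=0$ obstacle structure — where the $\max$ with $u$ enforces the constraint $u\le 0$ and freezes $u$ at $0$ once a trajectory reaches the support — or a direct comparison argument: build explicit sub- and supersolutions of \eqref{HJ} of the form $\min\{0,\phi(\dist(x,\Omega)-tc^*)\}$ and invoke the comparison principle for \eqref{HJ} together with the uniqueness in Lemma \ref{lem intro}. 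Both routes are standard in the Freidlin--Evans--Souganidis circle of ideas, so I would present whichever the appendix makes cheapest, and then the two bullet points drop out as above. Throughout one uses that $J$ and $K$ being bounded intervals is exactly what makes $\{u_J<0\}$ and $\{u_K=0\}$ expressible through the single scalar $\dist(\cdot,\cdot)$ and the single speed $c^*$.
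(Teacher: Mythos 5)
Your proposal is correct in outline and reaches the corollary by the same top-level route as the paper: identify $\{u_J<0\}$ and $\{u_K=0\}$ for interval data in terms of $\dist(\cdot,\Omega)$ and the speed $c^*=\min_{p>0}H(p)/p$ (which agrees with the paper's $c^*=\min_\lambda \cc(\lambda)$, since $H(\lambda)=\lambda\cc(\lambda)$ for $\lambda>0$; equivalently it is encoded in the identity (\ref{Hc*})), and then read off both bullets from Theorem \ref{result on n}, using that the strict inequalities place $(x,t)$ in an open subset of the relevant region. Where you diverge is in how the identification step is carried out. The paper does not prove your stronger claim that $\{u_\Omega=0\}$ is \emph{exactly} the closed $tc^*$-neighborhood of $\Omega$; it only needs, and only proves, the one-sided inclusions $\{d(x,\Omega)>tc^*\}\subseteq\{u_\Omega<0\}$ and $\{d(x,\Omega)<tc^*\}\subseteq\{u_\Omega=0\}$. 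It gets these by comparing $u_\Omega$ with the solution $w_\Omega$ of the geometric front equation $\partial_t w=c^*|\partial_x w|$ via Propositions 2.3 and 2.4 of Majda--Souganidis, and then solves that first-order equation explicitly by characteristics (Lemma \ref{lemfornewcor}). Your alternative — explicit sub/supersolutions of the form $\min\{0,\phi(\dist(x,\Omega)-tc^*)\}$ together with the comparison principle of Proposition \ref{prop:prelem}, or a Hopf--Lax representation — is a viable and arguably more self-contained substitute, but it is left as a sketch; the barrier construction still has to be executed using convexity of $H$ and (\ref{Hc*}). One factual correction: the appendix does \emph{not} contain the statement that $\{u_{(a,b)}=0\}$ is the $tc^*$-neighborhood of $(a,b)$; it only provides existence, uniqueness, and comparison for (\ref{HJ}) with infinite initial data, so you cannot simply cite it for the identification step — that is precisely the content the paper imports from \cite{MS}.
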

The proof of Corollary \ref{corc*informal} is in Subsection \ref{subsect:pf of cors}.  It uses the work of  Majda and Souganidis \cite{MS} concerning a class of equations of the form (\ref{HJ}) but with more general Hamiltonians.
\begin{rem}
\label{remarkBC}
Corollary \ref{corc*informal} directly connects our result to the main result of Bouin and Calvez  \cite{BouinCalvez}. Indeed, Theorem 3 of \cite{BouinCalvez}  says that there exists a travelling wave solution of (\ref{n}) of  speed $c^*$, for the same $c^*$ as in Corollary \ref{corc*informal}. While it is not known whether solutions of (\ref{n}) converge to a travelling wave,  Corollary \ref{corc*informal} is a result in this direction -- it says that, in the limit as $\ep\rightarrow 0$, the  regions where the $n^\ep$ is positive and zero travel with speed $c^*$.
\end{rem}

\subsubsection*{Biological interpretation of Theorem \ref{result on n}.} 
The biological question is, as time goes on, which territory will be occupied by the species and which will be left empty? To answer this question, it is enough to  determine where the functions $u_J$ and $u_K$   are zero. In fact, Corollary \ref{corc*informal}  gives information about the limit of the $n^\ep$ simply in terms of the  sets $J$, $K$,  and a constant $c^*$. 
%For simplicity we consider only the case that $J$ and $K$ are bounded intervals. 
Indeed, we see that at time $t$, if we  stand a point that is ``far" from $J$, then there are no individuals at $x$. On the other hand, if we 
stand at a point $x$ that is ``pretty close" to $K$, then there are some  individuals living near $x$.

We formulate another corollary:
\begin{cor}
\label{cor:onn}
Assume   (A\ref{asump C2}), (A\ref{asm n0}), (A\ref{asm Theta}) and that  $J$ and $K$ are bounded intervals. Then:
%Under the assumptions of Corollary \ref{corc*informal}, we have:
\begin{itemize}
\item if  $\dist(x, J)>2t\sqrt{\theta_M r}$, then $\displaystyle \lim_{\ep\rightarrow 0} n^\ep (x,\theta,t)=0$ for all $\theta\in \Theta$; and,
\item if $\dist(x, K)<2t\sqrt{\theta_m r}$, then $\displaystyle \limsup_{\ep\rightarrow 0}{ }^* \rho^\ep(x,t)\geq 1$.
\end{itemize}
\end{cor}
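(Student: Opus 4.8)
Corollary \ref{cor:onn} is a direct consequence of Corollary \ref{corc*informal}; all that needs to be done is to sandwich the constant $c^*$ between the two explicit speeds $2\sqrt{\theta_m r}$ and $2\sqrt{\theta_M r}$. So the plan is: first recall from Proposition \ref{prop:spectral} and the discussion around \eqref{HJ} that $c^*$ is the minimal speed associated to the Hamiltonian $H$ arising from the eigenvalue problem \eqref{spectral}, i.e. $c^* = \min_{p>0} H(p)/p$ (this is the standard Freidlin--G\"artner/Majda--Souganidis formula that Corollary \ref{corc*informal} is built on). Then establish the two-sided bound
\[
2\sqrt{\theta_m r}\ \le\ c^*\ \le\ 2\sqrt{\theta_M r},
\]
and feed it into the two bullet points of Corollary \ref{corc*informal}: since $\dist(x,J) > 2t\sqrt{\theta_M r} \ge t c^*$ implies $\dist(x,J)>tc^*$, the first bullet of Corollary \ref{corc*informal} gives $\lim_{\ep\to0} n^\ep(x,\theta,t)=0$; and since $\dist(x,K) < 2t\sqrt{\theta_m r} \le t c^*$ implies $\dist(x,K)<tc^*$, the second bullet gives $\limsup^*_{\ep\to0}\rho^\ep(x,t)\ge 1$.

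**Proving the bound on $c^*$.** The function $H$ is characterized by: for each $p\in\rr$, $H(p)$ is the principal eigenvalue of the operator $\phi\mapsto \alpha\phi'' + (\theta p^2 + r)\phi$ on $\Theta$ with Neumann boundary conditions (this is \eqref{spectral}). For the upper bound $c^*\le 2\sqrt{\theta_M r}$: compare with the constant-motility Fisher--KPP equation with diffusion coefficient $\theta_M$, whose spreading speed is exactly $2\sqrt{\theta_M r}$; concretely, using the constant test function $\phi\equiv 1$ in the Rayleigh quotient (or the comparison $\theta p^2 + r \le \theta_M p^2 + r$ pointwise in $\theta$) one gets $H(p) \le \theta_M p^2 + r$, hence $c^* = \min_{p>0}\frac{H(p)}{p} \le \min_{p>0}\frac{\theta_M p^2+r}{p} = 2\sqrt{\theta_M r}$. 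For the lower bound, use $\theta p^2 + r \ge \theta_m p^2 + r$ pointwise, so by monotonicity of the principal eigenvalue in the zeroth-order coefficient, $H(p)\ge \theta_m p^2 + r$, whence $c^* \ge \min_{p>0}\frac{\theta_m p^2+r}{p} = 2\sqrt{\theta_m r}$. (One should double-check the precise normalization of $H$ used in the paper—whether $H(p)$ is the eigenvalue itself or shifted—but the monotonicity argument is robust to that.)

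**Main obstacle.** There is essentially no analytic difficulty here; the one point requiring care is purely bookkeeping: making sure the variational/monotonicity characterization of $H$ invoked above matches the definition given in Proposition \ref{prop:spectral}, and that the Legendre-type formula $c^* = \min_{p>0} H(p)/p$ used to derive Corollary \ref{corc*informal} from \cite{MS} is stated in a form to which the pointwise bounds $\theta_m p^2 + r \le H(p) \le \theta_M p^2 + r$ can be applied verbatim. Once that is pinned down, the corollary follows in two lines from Corollary \ref{corc*informal}.
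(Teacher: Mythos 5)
Your proposal is correct and follows essentially the same route as the paper: the paper's proof of Corollary \ref{cor:onn} is exactly the two-line deduction from Corollary \ref{corc*informal} together with the bounds $2\sqrt{\theta_m r}\leq c^*\leq 2\sqrt{\theta_M r}$, which are already recorded as (\ref{boundsonc*}) in Proposition \ref{prop:spectral} and proved there by minimizing the two sides of $\lambda\theta_m+r\lambda^{-1}\leq \cc(\lambda)\leq\lambda\theta_M+r\lambda^{-1}$, just as you do. The only difference is that you re-derive the eigenvalue comparison $\theta_m\lambda^2+r\leq H(\lambda)\leq\theta_M\lambda^2+r$, which the paper simply imports as (\ref{bdonc})/(\ref{bdoncBC}) from the spectral problem.
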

%In addition, the following bounds on $c^*$ hold:
%\begin{equation}
%\label{boundsonc*}
%2\sqrt{\theta_m r}\leq c^* \leq 2\sqrt{\theta_M r} 
%\end{equation}
%see Proposition \ref{prop:spectral}). 
We remind the reader of the fact, due to Aronson and Weinberger \cite{AW},  that $2\sqrt{\beta r}$ is the asymptotic speed of propagation of fronts for (\ref{KPP}). Thus, a consequence of Corollary \ref{cor:onn} is that, in the limit, the population we're considering spreads slower than one with constant motility $\theta_M$ and faster than one with constant motility $\theta_m$.  We give the proof of Corollary \ref{cor:onn} in Subsection \ref{subsect:pf of cors}. In addition, please see Remark \ref{rem:interpret} for  further comments on the biological implications of our results.

\begin{comment}
We remind the reader of the fact, due to Aronson and Weinberger \cite{AW},  that $2\sqrt{\beta r}$ is the asymptotic speed of propagation of fronts for (\ref{KPP}).

\begin{cor}
\label{cor:onn}
Assume   (A\ref{asump C2}), (A\ref{asm n0}) and (A\ref{asm Theta}). Let us also suppose that each of $J$ and $K$ is a (possibly infinite) interval.
\begin{itemize}
\item If  $\dist(x, J)>2t\sqrt{\theta_M r}$, then $\displaystyle \lim_{\ep\rightarrow 0} n^\ep (x,\theta,t)=0$ for all $\theta\in \Theta$.
\item If $\dist(x, K)<2t\sqrt{\theta_m r}$, then $\displaystyle \limsup_{\ep\rightarrow 0}{ }^* \rho^\ep(x,t)\geq 1$.
\end{itemize}
\end{cor}

In addition,  We state the following corollary to Theorem \ref{main result}:%, which says that the regions where $n^\ep$

%Since Theorem \ref{result on n} characterizes the limiting behavior of $n^\ep$ in terms of the sets $\{x: u_J(x,t)<0\}$ and $\{x: u_K(x,t)=0\}$, we find that, in the limit as $\ep$ approaches $0$, the sets $\{n^\ep =0\}$ and $\{\rho^\ep \geq 1\}$ move with speed $c^*$. 
%We remark that item 
%(Please see Corollary \ref{corc*} for the precise version of  Corollary \ref{corc*informal}.) Thus, the speed $c^*$  characterizes the asymptotic behavior of solutions to (\ref{nep}).

In Section \ref{subsec:interpret} we further discuss the biological interpretation of our results and give the proof of Corollaries \ref{cor:onn} and \ref{corc*informal}. 
\end{comment}

\subsubsection*{Biological motivation}
Biologists are interested in  the interplay  between  traits present in a species and how the species interacts with its environment -- in other words, between evolution and ecology \cite{ST, SBP, Ronce, KL-S}. It has been observed, for example in  butterflies in Britain \cite{TBWS}, that an expansion of the territory that a species occupies may coincide with changes in a certain trait  -- the butterflies that spread to new territory were able to lay eggs on a larger  variety of plants than the butterflies in previous generations. The phenotypical trait in this case  is related to  adaptation to a fragmented habitat. 

Some biologists have focused specifically on the interaction between ecology and traits that affect motility.  Phillips et al \cite{Phillips} recently discovered a  species of cane toads whose territory has, over the past 70 years,  spread with a speed that  \emph{increases}  in time. This is very interesting because this is contrary to what is predicted by the Fisher-KPP equation \cite{Fisher, KPP, AW} and has previously been observed empirically \cite{Skellam}.  
 Spacial sorting was also observed --  the toads that arrive first in the new areas have longer legs than those in the areas that have been occupied for a long time.   In addition, it was discovered that toads with longer legs are able to travel further than toads with shorter legs.  It is hypothesised that the presence of this trait -- length of legs --   is responsible for both the front acceleration and the spacial sorting. 
 Similar phenomena were observed in crickets in Britain over a shorter time period \cite{TBWS, ST}. In that case, the motility trait was wingspan. 

The cases we describe  demonstrate the need to understand the influence of a  trait -- in particular, a motility trait --  on the dynamics of a population.

\subsubsection*{Literature review}
The Fisher-KPP equation has been extensively studied, and we refer the reader to \cite{Fisher, KPP, Aronson, Fife, Murray} for an introduction. 
Hamilton-Jacobi equations similar to (\ref{HJ})  are known to arise in the analysis of the long-time and long-range behavior of (\ref{KPP}), other reaction-diffusion PDE, and systems of such equations -- see, for example, Friedlin \cite{Friedlin}, Evans and Souganidis \cite{EvansSouganidis} and Barles, Evans and Souganidis \cite{BarlesEvansSouganidis} and Fleming and Souganidis \cite{FS}. The methods of \cite{EvansSouganidis, BarlesEvansSouganidis, FS} are a key part of our analysis of (\ref{nep}).

As we  previously mentioned, (\ref{KPP}) describes  populations structured by space alone, while there is a need to study the interaction of dispersion and phenotypical traits (in particular, motility traits). Most models of populations structured by space and trait either consider a trait that does not affect motility or do not consider the effect of mutations.  Champagnat and M\'{e}l\'{e}ard \cite{CM} start with an individual-based model of  such a population and derive a PDE that describes its dynamics. In the case that the trait affects only the growth rate and not the motility, Alfaro, Coville and Raoul \cite{ACR} study this PDE, which is a reaction-diffusion equation with constant diffusion coefficient:
\begin{equation}
\label{ACR}
n_t - \Delta_{x,\theta} n = \left(r(\theta - x) - \int_\rr K(\theta - x, \theta'-x)n (x, \theta', t)\,  d\theta'\right) n.
\end{equation} 
The  population modeled by (\ref{ACR}) has a preferred trait that varies in space.   Berestycki, Jin and Silvestre \cite{BJS} analyze an equation similar to (\ref{ACR}), but with a different kernel $K$ and growth term $r$ that represent the existence of a trait that is favorable for all individuals.   
The  aims and methods of \cite{ACR, BJS} are quite different from those in this paper. The main result of \cite{ACR} is the existence of traveling wave solutions of (\ref{ACR}) for speeds above a  critical threshold. In \cite{BJS}, the authors establish  the  existence and uniqueness of travelling wave solutions and prove an asymptotic speed of propagation result for the equation that they consider. 
We also mention that  a local version of  (\ref{ACR}) was investigated by Berestycki and Chapuisat  \cite{BC}.  
Desvillettes,  Ferriere and  Pr\'{e}vost   \cite{DFP} and Arnold, Desvillettes and  Pr\'{e}vost \cite{ADP} study a model in which the dispersal rate does depend on the trait and the trait is subject to mutation, but the mutations are represented by a nonlocal linear term, not a diffusive term. 

There has also been analysis of traveling waves and steady states for  equations of the form
\begin{equation}
\label{kernel}
v_t = \Delta v +v(1-v\ast\phi) \text{ in } \rr \times (0,\infty),
\end{equation}
where the reaction term $v\ast\phi$ is the convolution of $v$ with some kernel $\phi$.  We refer the reader to Berestycki, Nadin, Perthame and Ryzhik \cite{BNPR},  Hamel and Ryzhik \cite{HamelRyzhik}, Fang and Zhao \cite{FZ}, Alfaro and Coville \cite{AC} and the references therein. An important difference between (\ref{kernel}) and (\ref{n}) is that the  reaction term of (\ref{n}) is local in the space variable and nonlocal in the trait variable, while  the reaction term in (\ref{kernel}) is fully nonlocal. The long time behavior of solutions to (\ref{kernel}) is studied in \cite{HamelRyzhik}. In addition, \cite[Theorem 1.2]{HamelRyzhik} establishes a supremum bound for solutions of (\ref{kernel}). 

Bouin and Mirrahimi \cite{BouinMirrahimi} analyze the reaction-diffusion equation 
\begin{equation}
\label{eqnBM}
v_t=D\Delta v + \alpha v_{\theta \theta} + rv(x,\theta,t)\left(a(x,\theta)-\int_{\Theta}v(x,\theta,t)\,d\theta\right) \text{ in }\rr^d\times \Theta\times (0,\infty),
\end{equation}
 with Neumann  conditions on $v$ on the boundary of $\Theta$. The main difference between (\ref{eqnBM}) and (\ref{n}) is that the coefficient of spacial diffusion  in (\ref{eqnBM}) is constant, which means that (\ref{eqnBM}) models  a population where the trait does not affect motility. The methods we use here are similar to those of \cite{BouinMirrahimi} (and, in turn, both ours and those of \cite{BouinMirrahimi} are similar to those used in \cite{EvansSouganidis, BarlesEvansSouganidis, FS} to study (\ref{KPP})). 
However,  in general it is  easier to obtain certain bounds for solutions of (\ref{eqnBM}) than for solutions of (\ref{n}). For example, because the  coefficient of $\Delta v$  in (\ref{eqnBM}) is constant, integrating (\ref{eqnBM}) in $\theta$ implies that $\int_{\Theta}v(x,\theta,t)\,d\theta$ is a subsolution of a local equation in $x$ and $t$ that enjoys the maximum principle. This immediately implies that  $\int_{\Theta}v(x,\theta,t)\,d\theta$ is globally bounded \cite[Lemma 2]{BouinMirrahimi}. This strategy does not work for (\ref{n}). 
Indeed, a serious challenge in studying (\ref{n}), as opposed to (\ref{KPP}) or  nonlocal reaction diffusion equations with constant diffusion coefficient such as (\ref{eqnBM}),  is obtaining a global supremum bound for solutions of (\ref{n}). Another challenge that arises in our situation but not in \cite{BouinMirrahimi} is in establishing certain gradient estimates in $\theta$ (see Remark \ref{rem:gradbdBM}). In addition, 
 we compare our main result, Theorem \ref{result on n}, with that of \cite{BouinMirrahimi} in Remark \ref{remrho}.

 Let us discuss the literature that directly concerns (\ref{n}) and (\ref{nep}). The problem (\ref{n}) was introduced in \cite{BCMV}. The rescaling leading to (\ref{nep}) was suggested by Bouin,  Calvez, Meunier, Mirrahimi,  Perthame,  Raoul, and Voituriez  in \cite{variablemotility}.  In addition,   formal results about the asymptotic behavior of solutions to (\ref{n})  were obtained in \cite{variablemotility}. In particular,   Part (I) of Proposition \ref{main result} of our paper    was predicted  in \cite[Section 2]{variablemotility}. (We briefly remark on the term ``motility." It was used heavily in \cite{variablemotility}, which is where   we first learned of the problem (\ref{n}). However, one of the referees of our paper pointed out that this term applies mainly to unicellular organisms.)
 
Bouin and Calvez \cite{BouinCalvez}  also study (\ref{n}). They  prove that there exist  traveling wave solutions to (\ref{n}) but do not analyze whether solutions converge to a traveling wave. In fact, to our knowledge, there are no previous rigorous results about the asymptotic behavior of  solutions of (\ref{n}) or the limiting behavior of solutions to (\ref{nep}). The main difficulty is the lack of comparison principle for (\ref{n}). Please see Corollary \ref{corc*informal}, Remark \ref{remarkBC}, as well as the fourth point below, for further discussion of the connections between the results of our paper and those of \cite{BouinCalvez}.

\subsubsection*{Contribution of our work}
\begin{itemize}
\item To the best of our knowledge, Theorem \ref{sup bd intro} is the first global supremum bound for a Fisher-KPP type equation with a nonlocal reaction term and non-constant diffusion. 
\item Theorem \ref{result on n} completes the program that was proposed in \cite{variablemotility} for analyzing the asymptotic behavior of the model (\ref{n}) in the case where the trait space $\Theta$ is bounded. 
\item  We view our main result, Theorem \ref{result on n}, as evidence that the presence of a motility trait does affect the limiting behavior of populations. %We discuss this in subsection \ref{subsec:interpret}.
\item  Corollary \ref{corc*informal}  provides a direct connection between our work and the main result of  \cite{BouinCalvez}. Indeed,  \cite[Theorem 3]{BouinCalvez} states that there exist travelling wave solutions to (\ref{n}) of a certain speed, while Corollary \ref{corc*informal} shows that this exact speed characterizes the limiting behavior of the $n^\ep$.
\item We hope that our work  is a step towards analyzing (\ref{n}) in the case where the trait space $\Theta$ is unbounded. It is in this case that the phenomena of accelerating fronts is predicted to occur \cite{variablemotility}.
\end{itemize}

\subsubsection*{Elements of the proofs of the main results.} 
The proof of Theorem \ref{sup bd intro} is quite involved. The difficulty comes from the combination of the nonlocal reaction term and non-constant diffusion. 
Our proof of Theorem \ref{sup bd intro} uses regularity estimates for solutions of elliptic PDE, a heat kernel estimate, and an averaging technique similar to that of \cite[Theorem 1.2]{HamelRyzhik}. We believe this combination of methods is new and may be useful in other contexts. We include a detailed outline in Subsection \ref{subsec:outline}.

To analyze the limit of the $n^\ep$, we preform the transformation
\begin{equation}
\label{def:up}
u^\ep(x,\theta,t)=\ep \ln (n^\ep(x,t,\theta)).
\end{equation} 
Such a transformation is used in \cite{EvansSouganidis, BarlesEvansSouganidis, variablemotility, BouinMirrahimi}. 
We prove locally uniform estimates on $u^\ep$: 
\begin{prop}
\label{bd on utheta}
Assume  (A\ref{asump C2}), (A\ref{asm n0}) and (A\ref{asm Theta}) and let $u^\ep$ be given by (\ref{def:up}). 
Suppose $Q$ is compactly contained in $\rr\times (0,\infty)$. There exists a constant $C$ that depends on $Q$, $\alpha$, $r$, $\theta_m$ and $\theta_M$ such that for all  $0<\ep<1$ and for all $(x,\theta,t)$ such that $(x,t)\in Q$ and $\theta \in \Theta$, we have
\[
 -C\leq u^\ep(x,\theta,t)\leq \ep \ln C
\]
and
\[
|u^\ep_\theta (x,\theta, t)|\leq \ep^{1/2} C.
\]
\end{prop}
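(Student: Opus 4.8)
\textbf{Proof proposal for Proposition \ref{bd on utheta}.}

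The plan is to transfer the already-established global supremum bound (Theorem \ref{sup bd intro}) to the $\ep$-dependent problem, and then to combine it with the parabolic structure of the equation satisfied by $u^\ep$ together with a Bernstein-type argument for the gradient estimate in $\theta$. First I would observe that $n^\ep(x,\theta,t) = n(x/\ep,\theta,t/\ep)$ where $n$ solves (\ref{n}); hence $\sup n^\ep \le C$ for the same $C$ as in Theorem \ref{sup bd intro}, which immediately yields the upper bound $u^\ep = \ep\ln n^\ep \le \ep\ln C$ (once $C\ge 1$, which we may assume). For the lower bound $u^\ep \ge -C$ on $Q\times\Theta$, I would rewrite (\ref{nep}) in terms of $u^\ep$: a direct computation gives
\begin{equation}
\label{eq:ueq}
\partial_t u^\ep = \ep\theta\, \partial^2_{xx}u^\ep + \theta|\partial_x u^\ep|^2 + \frac{\alpha}{\ep}\partial^2_{\theta\theta}u^\ep + \frac{\alpha}{\ep^2}|\partial_\theta u^\ep|^2 + r(1-\rho^\ep),
\end{equation}
with Neumann conditions $\partial_\theta u^\ep = 0$ on $\partial\Theta$. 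Since $\rho^\ep$ is bounded above (by $(\theta_M-\theta_m)C$), the zeroth-order term $r(1-\rho^\ep)$ is bounded below, so $u^\ep$ is a supersolution of a linear parabolic equation with bounded right-hand side; comparing with an explicit subsolution (a paraboloid opening downward in $t$ and adjusted in $x$) and using that $u^\ep(\cdot,\cdot,0) = \ep\ln n_0 \ge -C_0$ near the relevant spatial range — here I would invoke assumption (A\ref{asm n0}), which should guarantee $n_0$ is bounded below on compact sets where it matters, or more carefully a finite-speed-of-propagation-type lower bound propagated from a single point — gives $u^\ep \ge -C$ on $Q\times\bar\Theta$ uniformly in $\ep<1$.

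The gradient estimate $|\partial_\theta u^\ep| \le \ep^{1/2}C$ is the genuinely delicate point, and I expect it to be the main obstacle. The idea is a localized Bernstein argument applied to $w^\ep := |\partial_\theta u^\ep|^2$ (or to $\partial_\theta u^\ep$ directly). Differentiating (\ref{eq:ueq}) in $\theta$, one finds that $v^\ep := \partial_\theta u^\ep$ solves a parabolic equation; the dangerous term is the one coming from $\partial_\theta(\tfrac{\alpha}{\ep^2}|\partial_\theta u^\ep|^2) = \tfrac{2\alpha}{\ep^2} v^\ep\, \partial_\theta v^\ep$, which after forming the equation for $(v^\ep)^2$ produces a large \emph{good} (dissipative) term $-\tfrac{C}{\ep^2}|\partial_\theta v^\ep|^2$ balanced against $+\tfrac{C}{\ep^2}(v^\ep)^2 \cdot(\text{something})$; the scaling $\ep^{1/2}$ in the target bound is exactly what makes these terms compatible. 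Concretely, I would multiply by a smooth spatial-temporal cutoff $\zeta$ supported near $Q$, and at an interior maximum of $\zeta^2 (v^\ep)^2$ use the equation plus the Neumann boundary condition (which kills the boundary contribution since $v^\ep=0$ on $\partial\Theta$, so the max is either interior in $\theta$ or on $\partial\Theta$ where $v^\ep=0$) to derive an algebraic inequality of the form $\tfrac{\alpha}{\ep^2}(v^\ep)^4 \lesssim \tfrac{1}{\ep}(v^\ep)^2 + (\text{lower order involving }\linfty{u^\ep}{} \text{ and } \linfty{\zeta, \partial\zeta}{})$, whence $(v^\ep)^2 \lesssim \ep$. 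The already-proven $L^\infty$ bound on $u^\ep$ is what controls the lower-order terms, so the two parts of the proposition are genuinely linked. An alternative, which may be cleaner, is to freeze $(x,t)$ and view $\theta\mapsto u^\ep(x,\theta,t)$ through the substitution $n^\ep = e^{u^\ep/\ep}$: the elliptic-in-$\theta$ part $\alpha \partial^2_{\theta\theta}n^\ep$ together with the $L^\infty$ bounds on $n^\ep$ from above and below gives, via interior elliptic estimates on the interval $\Theta$ (Schauder or $W^{2,p}$, then Sobolev embedding), a bound $|\partial_\theta n^\ep| \le C/\ep \cdot (\text{something})$ — but one must be careful that the $x$- and $t$-derivatives appearing when one isolates $\partial^2_{\theta\theta}n^\ep$ from the PDE do not spoil the estimate; this is where the $\ep^2$ and $\ep$ weights on $\partial^2_{xx}$ and $\partial_t$ help.

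I would organize the write-up as: (i) reduce to the static bound via rescaling and get the upper bound and the bound on $\rho^\ep$; (ii) establish the lower bound on $u^\ep$ by a comparison/barrier argument, citing the relevant assumptions on $n_0$; (iii) prove the $\theta$-gradient bound by the localized Bernstein computation, carefully tracking the powers of $\ep$ and handling the Neumann boundary via the vanishing of $\partial_\theta u^\ep$ there. The main obstacle, as noted, is step (iii): getting the constant in $|\partial_\theta u^\ep|\le \ep^{1/2}C$ to depend only on $Q,\alpha,r,\theta_m,\theta_M$ and not on $\ep$, which forces one to exploit the precise sign and size of the dissipative term $\tfrac{\alpha}{\ep^2}\partial^2_{\theta\theta}u^\ep$ against the nonlinearity $\tfrac{\alpha}{\ep^2}|\partial_\theta u^\ep|^2$.
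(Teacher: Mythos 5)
Your treatment of the upper bound is correct and matches the paper (rescale to reduce to Theorem \ref{sup bd intro}, then take $\ep\ln$). The other two parts have problems.

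For the lower bound, your primary justification — that (A\ref{asm n0}) ``should guarantee $n_0$ is bounded below on compact sets where it matters'' — is false: (A\ref{asm n0}) is only a regularity hypothesis, and $n_0$ is allowed to vanish identically outside the set $J$ (this is the whole point of the sets $J$ and $K$ in the main theorem), so $u^\ep(\cdot,\cdot,0)=\ep\ln n_0=-\infty$ on a possibly large region. Your hedge about propagating positivity from a single point is the right idea, and it is what the paper actually does, but it is the entire content of the argument, not a refinement: one fixes one cube $K_R$ on which $n_0>0$, uses a barrier of the form $\xi(x-x_0)+\ep^2\xi(\theta-\theta_0)-at+\ln\beta$ with $\xi(z)=(z^2-R^2)^{-1}$ (blowing up to $-\infty$ on the lateral boundary of the cube) to get a lower bound on $K_{R/2}\times[0,T]$, and then spreads it to the rest of $\rr\times\Theta\times(0,T)$ with a second barrier $-b(x-x_0)^2/t-ct+\tau$ that is $-\infty$ at $t=0$ — which is precisely why the conclusion only holds on compact subsets of $\rr\times(0,\infty)$ and not up to $t=0$.

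The genuine gap is in the gradient bound. If you differentiate the equation for $u^\ep$ in $\theta$ and set $z=u^\ep_\theta$, the non-constant diffusion coefficient $\ep\theta\partial^2_{xx}$ and the $\theta$-dependent Hamiltonian $\theta(u^\ep_x)^2$ produce the terms $\ep\, u^\ep_{xx}$ and $(u^\ep_x)^2$ in the equation for $z$ (see (\ref{eq:demo}) and Remark \ref{rem:gradbdBM}). These are not the ``dangerous term'' you identified ($\tfrac{2\alpha}{\ep^2}zz_\theta$ is handled routinely at a maximum); they are second and first derivatives in $x$ with no sign and no a priori control, so a Bernstein argument run on $|\partial_\theta u^\ep|^2$ alone does not close. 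This is exactly the obstruction that distinguishes this problem from the constant-coefficient case of \cite{BouinMirrahimi}, where your proposed computation does work. The paper's resolution is to apply the Bernstein method to the \emph{combined} quantity $z=\zeta\bigl(\theta(u^\ep_x)^2+\tfrac{\alpha}{\ep^2}(u^\ep_\theta)^2\bigr)+\lambda u^\ep$ (Proposition \ref{prop:gradbd}), so that the bad $x$-derivative terms are absorbed by the good terms $-\tfrac{\zeta}{\ep}\tr(A^2D^2uD^2u)$ and $-\lambda G(Du,y)$ generated by the $x$-part of the same quantity; this simultaneously yields $|u^\ep_x|\le C\ep^{-1/2}$ and $|u^\ep_\theta|\le C\ep^{1/2}$. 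One further ingredient you omit: the argument needs $|\rho^\ep_x|\le C/\ep$, which comes from the $C^{2,\eta}$ bound on the unscaled solution (Proposition \ref{deriv bd in terms of M}), to control the term involving $Df$ in the Bernstein computation.
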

We define the half-relaxed limits $\bar{u}$ and $\underline{u}$ of $u^\ep$ by,
\begin{equation}
\label{halfrelaxed}
\begin{split}
&\bar{u}(x,t)=\lim_{\ep\rightarrow 0} \sup \{u^{\ep'}(y,\theta,s):\,  \ep'\leq \ep,  |y-x|,  |t-s|\leq \ep, \theta\in \Theta \} \\
&\text{ and }\\
&\underline{u}(x,t)=\lim_{\ep\rightarrow 0}\inf\{u^{\ep'}(y,\theta,s):\, \ep'\leq \ep, |y-x|,  |t-s|\leq \ep, \theta\in \Theta\}.
\end{split}
\end{equation}
The supremum estimates of Proposition \ref{bd on utheta} imply  that $\underline{u}$ and $\bar{u}$ are finite everywhere on $\rr\times(0,\infty)$. Moreover, the gradient estimate of Proposition \ref{bd on utheta} implies  $u^\ep$ becomes independent of $\theta$ as $\ep$ approaches zero. Thus, it is natural that  $\bar{u}$ and $\underline{u}$ should be independent of $\theta$. There is also a connection to homogenization theory -- we can think of $\theta$ as the ``fast" variable, which disappears in the limit. It is the Hamilton-Jacobi equation (\ref{HJ}), which arises in the $\ep\rightarrow 0$ limit, that captures the effect of the ``fast" variable.

We use  a perturbed test function  argument (Evans \cite{EvansPTF}) and  techniques similar to the proofs of \cite[Theorem 1.1]{EvansSouganidis}, \cite[Propositions 3.1 and 3.2]{BarlesEvansSouganidis}, and \cite[Proposition 1]{BouinMirrahimi} to establish:
\begin{prop}
\label{main result}
Assume   (A\ref{asump C2}), (A\ref{asm n0}) and (A\ref{asm Theta}) and let $\bar{u}$ and $\underline{u}$ be given by (\ref{halfrelaxed}).  Then:
\begin{description}
\item[\bf{(I)}] $\bar{u}$ is a viscosity subsolution and $\underline{u}$ is a viscosity supersolution of (\ref{HJ})
in $\rr\times  (0,\infty)$; and
\item[\bf{(II)}] we have 
\begin{equation}
\label{eqn:us at time 0}
\bar{u}(x,0)= 
\begin{cases}
0 &\text{ for } x\in J\\
-\infty &\text{ for } x \in \rr\setminus \bar{J}
\end{cases}
\end{equation}
and
\begin{equation}
\label{eqn:u under at 0}
 \underline{u} (x,0)= 
\begin{cases}
0 &\text{ for } x\in K\\
-\infty &\text{ for } x \in \rr\setminus K.
\end{cases}
\end{equation}
\end{description}
\end{prop}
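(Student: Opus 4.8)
The plan is to work throughout with $u^\ep=\ep\ln n^\ep$, which, upon dividing $(E_\ep)$ by $n^\ep$, satisfies
\[
\partial_t u^\ep=\theta(\partial_x u^\ep)^2+\ep\theta\,\partial_{xx}^2u^\ep+\frac{\alpha}{\ep^2}(\partial_\theta u^\ep)^2+\frac{\alpha}{\ep}\partial_{\theta\theta}^2u^\ep+r(1-\rho^\ep),
\]
and to run a perturbed test function argument (Evans \cite{EvansPTF}) in the style of \cite{EvansSouganidis,BarlesEvansSouganidis,BouinMirrahimi}. The corrector is built from (\ref{spectral}): for $p\in\rr$ let $w_p>0$ be the principal eigenfunction supplied by Proposition \ref{prop:spectral} and set $\psi_p=\ln w_p$. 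Then $\psi_p$ is smooth and bounded, satisfies $\psi_p'(\theta_m)=\psi_p'(\theta_M)=0$, and --- this is precisely how $H$ is manufactured from (\ref{spectral}) ---
\[
\theta p^2+\alpha\big(\psi_p'(\theta)\big)^2+\alpha\psi_p''(\theta)+r=H(p)\qquad\text{for every }\theta\in\Theta.
\]

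For the subsolution half of (I): by Proposition \ref{bd on utheta} we have $\bar u\le 0$, so it is enough to check $\partial_t\phi(x_0,t_0)-H(\partial_x\phi(x_0,t_0))\le 0$ whenever a smooth $\phi$ touches $\bar u$ from above at a strict local maximum $(x_0,t_0)\in\rr\times(0,\infty)$. Set $p=\partial_x\phi(x_0,t_0)$ and $\phi^\ep(x,\theta,t)=\phi(x,t)+\ep\psi_p(\theta)$. The standard half-relaxed-limit lemma produces maximizers $(x_\ep,\theta_\ep,t_\ep)$ of $u^\ep-\phi^\ep$ over a small closed cylinder with $(x_\ep,t_\ep)\to(x_0,t_0)$, $(u^\ep-\phi^\ep)(x_\ep,\theta_\ep,t_\ep)\to 0$, and $\theta_\ep\to\theta_*\in\bar\Theta$. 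Inserting the maximum-point relations $\partial_{t}u^\ep=\partial_{t}\phi^\ep$, $\partial_{x}u^\ep=\partial_{x}\phi^\ep$, $\partial_{\theta}u^\ep=\partial_{\theta}\phi^\ep$, $\partial_{xx}^2u^\ep\le\partial_{xx}^2\phi^\ep$, $\partial_{\theta\theta}^2u^\ep\le\partial_{\theta\theta}^2\phi^\ep$ into the $u^\ep$ equation, using $\rho^\ep\ge 0$ (hence $r(1-\rho^\ep)\le r$) together with $\partial_\theta\phi^\ep=\ep\psi_p'$ and $\partial_{\theta\theta}^2\phi^\ep=\ep\psi_p''$, the singular $\ep^{-2}$ and $\ep^{-1}$ contributions telescope and one is left with
\[
\partial_t\phi\le\theta_\ep(\partial_x\phi)^2+\alpha\big(\psi_p'(\theta_\ep)\big)^2+\alpha\psi_p''(\theta_\ep)+r+\ep\,\theta_\ep\,\partial_{xx}^2\phi
\]
evaluated at $(x_\ep,t_\ep)$ and $\theta_\ep$. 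Letting $\ep\to 0$ and using the identity for $\psi_p$ gives $\partial_t\phi(x_0,t_0)\le H(p)$. (When $\theta_\ep\in\partial\Theta$ the Neumann conditions on $n^\ep$ and on $w_p$ make the computation legitimate at the endpoint.)

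For the supersolution half of (I): let $\phi$ touch $\underline u$ from below at a strict local minimum $(x_0,t_0)$. If $\underline u(x_0,t_0)\ge 0$ the supersolution inequality for the $\max$-equation is automatic, so assume $\underline u(x_0,t_0)<0$ and aim for $\partial_t\phi(x_0,t_0)-H(\partial_x\phi(x_0,t_0))\ge 0$. With $\phi^\ep$ as above (now $p=\partial_x\phi(x_0,t_0)$) take minimizers $(x_\ep,\theta_\ep,t_\ep)\to(x_0,t_0)$ with $(u^\ep-\phi^\ep)(x_\ep,\theta_\ep,t_\ep)\to 0$. The crux is that $\rho^\ep(x_\ep,t_\ep)\to 0$: since $u^\ep(x_\ep,\theta_\ep,t_\ep)=\phi^\ep(x_\ep,\theta_\ep,t_\ep)+o(1)\to\underline u(x_0,t_0)<0$, the gradient bound $|\partial_\theta u^\ep|\le\ep^{1/2}C$ of Proposition \ref{bd on utheta} forces $\sup_{\theta\in\Theta}u^\ep(x_\ep,\theta,t_\ep)\to\underline u(x_0,t_0)<0$, so $\rho^\ep(x_\ep,t_\ep)=\int_\Theta e^{u^\ep/\ep}\,d\theta\to 0$. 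Feeding the minimum-point relations (with $\partial_{xx}^2u^\ep\ge\partial_{xx}^2\phi^\ep$ and $\partial_{\theta\theta}^2u^\ep\ge\partial_{\theta\theta}^2\phi^\ep$) into the $u^\ep$ equation and letting $\ep\to 0$ now gives $\partial_t\phi(x_0,t_0)\ge\theta_*p^2+\alpha(\psi_p'(\theta_*))^2+\alpha\psi_p''(\theta_*)+r=H(p)$.

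For (II), $\bar u\le 0$ and $\underline u\le\bar u\le 0$ by Proposition \ref{bd on utheta}, so each statement is a one-sided bound. (i) If $x_0\in J$, fix $\theta_0\in\Theta$ with $n_0(x_0,\theta_0)>0$; since $n^\ep$ is globally bounded (Theorem \ref{sup bd intro}) we have $r(1-\rho^\ep)\ge -C'$, so $n^\ep$ is a supersolution of the linear equation $\ep\partial_t m=\ep^2\theta\,\partial_{xx}^2 m+\alpha\,\partial_{\theta\theta}^2 m-C' m$, and comparing it from below with a small bump $c\,\zeta(x)\eta(\theta)e^{-\lambda t/\ep}$ supported near $(x_0,\theta_0)$ gives $u^\ep(x_0,\theta_0,s)\ge\ep\ln c-\lambda s$, whence $\bar u(x_0,0)\ge 0$. (ii) If $x_0\notin\bar J$, then $n_0\equiv 0$ on $B_{d_0}(x_0)\times\bar\Theta$ with $d_0=\dist(x_0,\bar J)>0$; as $\rho^\ep\ge 0$, $n^\ep$ is a subsolution of $\ep\partial_t m=\ep^2\theta\,\partial_{xx}^2 m+\alpha\,\partial_{\theta\theta}^2 m+r m$, and comparison with the barrier $\linfty{n_0}{\rr\times\Theta}\,e^{(rt-\Phi(x,t))/\ep}$, $\Phi(x,t)=\big((d_0-|x-x_0|)_+\big)^2/(4\theta_M t)$, yields $u^\ep(y,\theta,s)\le rs-c\,d_0^2/(\theta_M s)+o(1)$ for $(y,s)$ near $(x_0,0)$ with $s>0$, while $u^\ep(y,\theta,0)=-\infty$ there; the supremum of $u^\ep$ over cylinders shrinking to $(x_0,0)$ (with $\ep\to0$) tends to $-\infty$, i.e. $\bar u(x_0,0)=-\infty$. (iii) If $x_0\in K$, then $n_0\ge c>0$ on $B_\delta(x_0)\times\bar\Theta$, and the argument of (i) with a $\theta$-independent bump $c\,\zeta(x)e^{-\lambda t/\ep}$ (which automatically meets the Neumann conditions) gives $\underline u(x_0,0)\ge 0$. (iv) If $x_0\notin K$, there is $\theta_0\in\bar\Theta$ with $n_0(x_0,\theta_0)=0$, so $u^\ep(x_0,\theta,0)=\ep\ln n_0(x_0,\theta)\to-\infty$ as $\theta\to\theta_0$, and the infimum defining $\underline u(x_0,0)$ is $-\infty$. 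The main obstacle is the supersolution half of (I): because $(E_\ep)$ has no comparison principle, $n^\ep$ cannot be squeezed against a supersolution, and the only route to $\rho^\ep\to 0$ at the minimizing points is through the $\theta$-gradient estimate of Proposition \ref{bd on utheta}; the rest --- the half-relaxed bookkeeping, the $\theta\in\partial\Theta$ case handled via the Neumann conditions, and the barrier and bump constructions in (II) (whose only wrinkle is a harmless downward kink in the exponent $\Phi$) --- is routine.
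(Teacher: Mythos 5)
Part (I) of your proposal is essentially the paper's own proof: the corrector $\ep\ln Q_\lambda(\theta)$ built from the spectral problem (\ref{spectral}), the use of $\rho^\ep\ge 0$ for the subsolution inequality, the $\theta$-gradient bound of Proposition \ref{bd on utheta} forcing $\rho^\ep(x_{\ep},t_{\ep})\to 0$ at the minimizing points for the supersolution inequality, and the Neumann matching of $\partial_\theta v^\ep$ and $\partial_\theta u^\ep$ when $\theta_\ep\in\bdry\Theta$ all appear in Section \ref{sec:pf main prop}. For Part (II) you genuinely diverge: the paper first shows that $\bar{u}$ (resp. $\underline{u}$) satisfies a variational inequality on $\{t=0\}$ involving a cutoff $\mu\zeta$ and then tests it with the parabolas $\pm|x-x_0|^2/\delta\pm\nu(\delta)t$ (the Barles--Evans--Souganidis scheme), whereas you argue directly by comparison with barriers for the linear equations that $n^\ep$ super-/subsolves. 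Your multiplicative bumps for $x\in J$ and $x\in K$ do work (the key point being that the bump sits inside the logarithm, so it contributes only $\ep\ln(\cdot)\to 0$ to $u^\ep$; note that for $x\in J$ the paper's argument is even shorter, since $s=0$ is admissible in the half-relaxed limit and $\ep\ln n_0(x,\theta^*)\to 0$ directly), and the case $x\notin K$ is immediate for both of you.

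The gap is in your step (ii), the claim $\bar{u}(x_0,0)=-\infty$ for $x_0\notin\bar{J}$. The barrier $\linfty{n_0}{\rr\times\Theta}\,e^{(rt-\Phi)/\ep}$ with $\Phi=\bigl((d_0-|x-x_0|)_+\bigr)^2/(4\theta_M t)$ is \emph{not} a viscosity supersolution of the linear majorant equation at $x=x_0$: $\Phi$ has a concave corner (a peak) there, so the exponent $rt-\Phi$, and hence the barrier itself, has a convex corner (a valley). At a convex corner the subjet contains second-order elements $X$ that are arbitrarily large and positive, and the supersolution inequality $\ep\varphi_t-\ep^2\theta\varphi_{xx}-\alpha\varphi_{\theta\theta}-r\varphi\ge 0$ fails for such $X$. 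You have the orientation backwards: a kink of this shape is harmless for \emph{sub}solutions (nothing smooth touches it from above) but fatal for \emph{super}solutions, and it occurs precisely on the line $\{x=x_0\}$ where you need the bound, so the comparison with $n^\ep$ is not justified there (failing the supersolution property even on a single line breaks comparison, as $|x|$ versus the constant $1$ on $[-1,1]$ already shows for $-u''\ge 0$). The step is repairable: smooth the barrier (e.g.\ $\Phi\propto\bigl((d_0^2-|x-x_0|^2)_+\bigr)^2/t$ with suitable constants), or compare $n^\ep e^{-rt/\ep}$ with the exact solution of the linear heat problem and invoke the Gaussian upper bound of Lemma \ref{lem:kernel} after rescaling, or follow the paper's two-step viscosity argument, which avoids explicit barriers altogether.
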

Part (I) of Proposition \ref{main result} was predicted via formal arguments in \cite[Section 2]{variablemotility}. 

 Theorem \ref{result on n} follows easily from Proposition \ref{main result} by arguments  similar to those in the proofs of \cite[Theorem 1.1]{EvansSouganidis} and \cite[Theorem 1]{BarlesEvansSouganidis}.

\begin{rem}
\label{remrho}
An interesting question is whether Theorem \ref{result on n} can be refined to obtain better information about the limit of the $\rho^\ep$ in the interior of the set $\{u_K=0\}$. For instance, is   $\liminf_* \rho^\ep$ bounded from below in this set? 
When the  diffusion coefficient is constant, which is the situation  studied in \cite{BouinMirrahimi}, the answer is yes. Indeed, \cite[Theorem 1]{BouinMirrahimi} provides a lower bound on $\liminf_{\ep\rightarrow 0}\int_{\Theta}v^\ep(x,\theta,t)\,d\theta$, where $v^\ep$ is a rescaling of the solution $v$ of (\ref{eqnBM}). This lower bound of \cite[Theorem 1]{BouinMirrahimi} is obtained using an argument that relies on the diffusion coefficient of $\Delta v$ being constant, and thus does not work in our case. 
\end{rem}

\subsubsection*{Structure of our paper} 
In the next  subsection, we state our assumptions, give notation, and provide the definition of $H$.  
 The rather lengthy  Section \ref{sec:sup bd} is  devoted to the proof of Theorem \ref{sup bd intro}. This section is self-contained. In Section \ref{sec:bdutheta} we prove Proposition \ref{bd on utheta}. The proof of Proposition \ref{main result} is in Section \ref{sec:limitsuep}. The proof of  Theorem \ref{result on n} is given in Section \ref{sec:result on n}, and the proofs of Corollaries \ref{corc*informal} and \ref{cor:onn} are in subsection \ref{subsect:pf of cors}. We also provide an appendix with a discussion of  results on existence, uniqueness, and comparison for Hamilton-Jacobi equations with infinite initial data.

We have organized our paper so that a reader who is interested mainly in our  proof of Theorem \ref{sup bd intro} may only read Section \ref{sec:sup bd}. On the other hand,  a reader who is interested in our results about the limit of the $n^\ep$, and not in the proofs of the supremum bound on $n^\ep$ and the estimates on $u^\ep$, may skip ahead to Sections  \ref{sec:limitsuep} and \ref{sec:result on n} after finishing the introduction.

\tableofcontents

\subsection{Ingredients}
\subsubsection{Assumptions}
\label{sec:asm}
Our results hold under the following assumptions: 
\begin{enumerate}[({A}1)]
\item \label{asump C2} $n^\ep$ is a non-negative classical solution of  (\ref{nep}).
\item \label{asm n0} 
$n_0(x,a(\theta))\in C^{2,\eta}(\rr\times \rr)$ for some $\eta\in (0,1)$,  where $a(\theta):\rr\rightarrow [\theta_m,\theta_M]$ is defined  by (\ref{a}).
\item  \label{asm Theta}  $\Theta = (\theta_m, \theta_M)$, where $0<\theta_m<\theta_M$.
\end{enumerate}
%
%\begin{rem}
%\label{remark: asump 2}
The assumption  (A\ref{asm n0}) implies that $n_0(x,\theta)$ is contained in  $C^{2,\eta}(\rr\times \Theta)$ and satisfies 
\[
\partial_\theta n_0(x,\theta_m)=\partial_\theta n_0(x,\theta_M)=0
\]
 for all $x\in \rr$. We use  condition (A\ref{asm n0}) in our proof of Theorem \ref{sup bd intro}. 

%It was suggested to us by the referee
%\end{rem}
  
\begin{rem}
\label{remark: relationship}
We remark on the relationship between (\ref{n}) and (\ref{nep}). 
Our results in Theorem \ref{result on n} concern the limiting behavior of $n^\ep$, and we emphasize that we do not make any precise statements concerning the asymptotic behavior of  solutions to (\ref{n}). As we mentioned ealier, this is due to our assumption that the initial data of (\ref{nep}) is independent of $\ep$. %There are two reasons for this assumption, which we now explain. The first is that the regularity assumption (\ref{asm n0}) needs to be satisfied in order for the  supremum bound of Theorem \ref{sup bd intro} to hold, and the second is that we need to keep track of the area where the initial data is zero in a way that is uniform in $\ep$. %because (\ref{nep}) is \emph{almost} the rescaled version of (\ref{n}), except that we consider initial data $n_0(x,\theta)$, instead of $n_0(x/\ep,\theta)$, in (\ref{nep}).

Indeed, let us  suppose $n$ satisfies (\ref{n}) and \emph{define} $n^\ep(x, \theta,t)=n(\frac{x}{\ep},\theta,\frac{t}{\ep})$. We find that $n^\ep$  satisfies (\ref{nep}), but with initial data $n_0(\frac{x}{\ep},\theta)$. Thus,  our assumption that the initial data of (\ref{nep}) does not depend on $\ep$ is essentially saying that we study the problem (\ref{n}) with  initial data that is invariant under the rescaling $x\mapsto \frac{x}{\ep}$. One example of such data is $\chi_{(-\infty,0]}(x)f(\theta)$, where $\chi_{(-\infty, 0]}$ is the indicator function of $(-\infty, 0]$. However, this example does not satisfy the regularity assumption (A\ref{asm n0}). We leave for future work the possibility of establishing a supremum bound on $n$ that does not require such regular initial data. % Thus, we are left with initial data of the form simply $f(\theta)$. This is too narrow a range of intial data to consider.

Let us also explain why we consider the problem (\ref{nep}) with initial data independent of $\ep$.
%We also remark that if we take the initial data $n_0$ of (\ref{n}) to be compactly supported, then in the limit 
%Even if we were to remove the regularity assumption (\ref{asm n0}), %which we need to establish the supremum bound of Theorem \ref{sup bd intro}, 
%there would be another issue.  We know that $n^\ep\geq 0$, but  it is not necessarily true that $n^\ep>0$.
A key element of our study is the transformation $u^\ep(x,\theta,t)=\ep \ln (n^\ep(x,t,\theta))$.   We know that $n^\ep\geq 0$, but  it is not necessarily true that $n^\ep>0$. It is therefore possible that $u^\ep$,  $\underline{u}$, or $\overline{u}$ ``take on the value $-\infty$". In order for our analysis to make sense, we need to eliminate this possibility  for times $t>0$.

To do this, we can make one of two assumptions regarding the initial data $n^\ep_0$. % that will eliminate the possibility that $u^\ep$ and  $\underline{u}(x,\theta,t)$ ``take on the value $-\infty$". % or imply that they may do so only in a region that doesn't vary in $\ep$.  
One option is to assume that $n_0^\ep>0$ holds everywhere (this  is essentially the assumption made in \cite[line (1.5)]{BouinMirrahimi}). 
Another option is to assume that the initial data  does not depend on $\ep$. This is the assumption made in \cite{EvansSouganidis, BarlesEvansSouganidis}, as well as in this paper. We  find that  $u^\ep$ is bounded from below on compact sets (see Proposition \ref{bd on utheta}). Such a bound is proven by a barrier argument  similar to those in \cite{EvansSouganidis}, but this argument does not work when the initial data depends on $\ep$. 
\end{rem}

\subsubsection{Notation} 
\label{notation}
We will slightly abuse  notation in the following way. If $Q$ is a subset of $\rr\times (0,\infty)$, then we will use  $Q\times \Theta$ to denote the set of $(x,\theta, t)$ such that $(x,t)\in Q$ and $\theta\in \Theta$. We record this as:
\[
Q\times \Theta = \{(x,\theta,t):\  \ (x,t)\in Q \text{ and } \theta\in \Theta\}.
\]

\subsubsection{The spectral problem}
Next we state the spectral problem (\ref{spectral}) of \cite[Proposition 5]{BouinCalvez}, which describes the speed $c^*$ and allows us to define the Hamiltonian $H$. 
\begin{prop}
\label{prop:spectralBC}
For all $\lambda>0$, there exists a unique solution $(\cc(\lambda), Q_\lambda(\theta))=(\cc(\lambda),Q(\theta, \lambda))$ of the spectral problem
\begin{equation}
\label{spectralBC}
\begin{cases}
(-\lambda\cc(\lambda)+ \theta \lambda^2 +r) Q(\theta,\lambda) +\alpha \partial^2_{\theta\theta}Q(\theta,\lambda) =0 \text{ for }\theta\in\Theta, &\\
\partial_\theta Q(\theta_m,\lambda)=\partial_\theta Q(\theta_M,\lambda)=0,
&\\ Q(\theta,\lambda)> 0 \text{ for all }\theta,\lambda, &\\
\int_\Theta Q(\theta,\lambda) \, d\theta =1.&
\end{cases}
\end{equation} 
The map $\lambda\mapsto \cc(\lambda)$ is continuous for $\lambda>0$, and satisfies, for all $\lambda>0$,
\begin{equation}
\label{bdoncBC}
\lambda^2 \theta_m +r\leq \lambda\cc(\lambda) \leq \lambda^2 \theta_M +r.
\end{equation}
In addition, the function $\lambda\mapsto \cc(\lambda)$ achieves its minimum, which we denote $c^*$, at some $\lambda^*>0$. 
\end{prop}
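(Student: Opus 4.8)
The plan is to recognize (\ref{spectralBC}), for each fixed $\lambda>0$, as the principal-eigenvalue problem for a one-dimensional self-adjoint Neumann operator, and to read off every assertion of the proposition from classical Sturm--Liouville (equivalently, Krein--Rutman) theory together with one elementary integration identity. Writing the ODE in (\ref{spectralBC}) as
\[
-\alpha\,\partial^2_{\theta\theta}Q-(\theta\lambda^2+r)Q=\nu Q\quad\text{in }\Theta,\qquad \partial_\theta Q(\theta_m)=\partial_\theta Q(\theta_M)=0,
\]
with the identification $\nu=-\lambda\cc(\lambda)$, I would first note that the operator $L_\lambda Q:=-\alpha\,\partial^2_{\theta\theta}Q-(\theta\lambda^2+r)Q$ on $L^2(\Theta)$ with Neumann conditions is self-adjoint, bounded below (its quadratic form dominates $-(\theta_M\lambda^2+r)\|Q\|_{L^2}^2$), and has compact resolvent since $\Theta$ is a bounded interval. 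Hence its spectrum is a discrete sequence $\nu_1(\lambda)<\nu_2(\lambda)\le\cdots$, the lowest eigenvalue $\nu_1(\lambda)$ is simple, and its eigenfunction may be taken strictly positive on $\bar\Theta$; higher eigenfunctions change sign. (If one prefers not to invoke one-dimensional oscillation theory, the same conclusions follow from the Krein--Rutman theorem applied to the positive compact operator $(L_\lambda+M)^{-1}$ for $M$ large.) Any solution of (\ref{spectralBC}) with $Q>0$ therefore must be a nonsign-changing eigenfunction, hence a positive multiple of the principal eigenfunction, with $\nu=\nu_1(\lambda)$; the constraint $\int_\Theta Q\,d\theta=1$ pins down the multiple. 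This yields simultaneously the existence and uniqueness of $(\cc(\lambda),Q_\lambda)$, with $\cc(\lambda)=-\nu_1(\lambda)/\lambda$, and elliptic regularity for the linear ODE with smooth coefficients makes $Q_\lambda$ as regular as needed.

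For the two-sided bound (\ref{bdoncBC}) I would integrate the ODE over $\Theta$. The term $\alpha\int_\Theta\partial^2_{\theta\theta}Q_\lambda\,d\theta$ vanishes by the Neumann conditions, so
\[
\lambda\cc(\lambda)\int_\Theta Q_\lambda\,d\theta=\lambda^2\int_\Theta\theta\,Q_\lambda(\theta)\,d\theta+r\int_\Theta Q_\lambda\,d\theta,
\]
and since $Q_\lambda>0$ with $\int_\Theta Q_\lambda\,d\theta=1$, the weighted average $\int_\Theta\theta\,Q_\lambda(\theta)\,d\theta$ lies in $[\theta_m,\theta_M]$; this gives exactly $\lambda^2\theta_m+r\le\lambda\cc(\lambda)\le\lambda^2\theta_M+r$, and in particular $\cc(\lambda)>0$ so that $\cc$ is a well-defined positive function. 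Continuity of $\lambda\mapsto\nu_1(\lambda)$ on $(0,\infty)$, hence of $\lambda\mapsto\cc(\lambda)=-\nu_1(\lambda)/\lambda$, follows from the continuous (in fact real-analytic) dependence of the potential $\theta\mapsto-(\theta\lambda^2+r)$ on $\lambda$: using the Rayleigh-quotient characterization of $\nu_1$ one bounds $|\nu_1(\lambda)-\nu_1(\mu)|\le\sup_{\theta\in\Theta}|\theta|\,|\lambda^2-\mu^2|$.

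For the existence of a minimizer, dividing (\ref{bdoncBC}) by $\lambda$ gives $\cc(\lambda)\ge\theta_m\lambda+r/\lambda\ge 2\sqrt{\theta_m r}$ for every $\lambda>0$, so $\cc$ is bounded below, while $\cc(\lambda)\to\infty$ both as $\lambda\to0^+$ (from the term $r/\lambda$) and as $\lambda\to\infty$ (from the term $\theta_m\lambda$). Since $\cc$ is continuous on $(0,\infty)$, one fixes any $\lambda_0$, chooses a compact interval $[a,b]\ni\lambda_0$ outside of which $\cc>\cc(\lambda_0)$, and observes that the minimum of $\cc$ over $[a,b]$ is a global minimum, attained at some $\lambda^*>0$; setting $c^*:=\cc(\lambda^*)$ completes the argument. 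I do not anticipate a genuine obstacle: the statement is essentially a restatement of \cite[Proposition 5]{BouinCalvez}, and the only point needing a little care is the appeal to principal-eigenvalue theory for a Neumann operator with a sign-indefinite zeroth-order term; everything else reduces to the displayed integration identity and elementary calculus.
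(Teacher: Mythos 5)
Your proof is correct, but it is worth knowing that the paper does not actually prove this proposition: it is imported wholesale as \cite[Proposition 5]{BouinCalvez}, and the only trace of its proof in the present paper is the remark (used later to get convexity of $H$) that Bouin--Calvez write $\lambda\cc(\lambda)=\lambda^2\theta_M+r-\gamma(\lambda)$ with $1/\gamma(\lambda)$ the eigenvalue of an auxiliary problem. What you supply is therefore a genuine, self-contained alternative: existence and uniqueness via the principal Neumann eigenvalue of the self-adjoint operator $-\alpha\partial^2_{\theta\theta}-(\theta\lambda^2+r)$ (with uniqueness of the pair forced by the fact that only the principal eigenfunction is signed, plus the normalization); the bound (\ref{bdoncBC}) by integrating the equation over $\Theta$ and using the Neumann conditions so that $\lambda\cc(\lambda)=\lambda^2\int_\Theta\theta Q_\lambda\,d\theta+r$ with the weighted average in $[\theta_m,\theta_M]$; continuity via the Lipschitz dependence of the Rayleigh quotient on $\lambda^2$; and the existence of $\lambda^*$ from coercivity of $\cc(\lambda)\ge\theta_m\lambda+r/\lambda$ at both ends of $(0,\infty)$ together with continuity. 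All four steps are sound, and this is exactly the standard route one would take to prove the Bouin--Calvez result from scratch. The one thing your argument does not deliver (and is not asked to here) is the specific representation $\lambda\cc(\lambda)=\lambda^2\theta_M+r-\gamma(\lambda)$ with $1/\gamma$ convex, which the paper later leans on in the proof of Proposition \ref{prop:spectral}; if you wanted a fully self-contained treatment of both propositions you would need to supplement your eigenvalue argument with a convexity statement for $\lambda\mapsto\nu_1(\lambda)$, e.g.\ by noting that the Rayleigh quotient is, for each fixed test function, an affine function of $\lambda^2$, so $\nu_1$ is concave in $\lambda^2$ — a different but equally workable route.
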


In the next proposition, we define the Hamiltonian $H$ and list the properties of $H$ that we will use.

\begin{prop}
\label{prop:spectral}
We define the Hamiltonian $H:\rr\rightarrow \rr$ by
\[
H(\lambda) = 
\begin{cases}
|\lambda| \cc(|\lambda|) \text{ for }\lambda\neq 0,\\
r \text{ for } \lambda = 0.
\end{cases}
\]
For all $\lambda\in \rr$, there exists a unique solution $(H(\lambda), Q_\lambda(\theta))=(H(\lambda),Q(\theta, \lambda))$ of the spectral problem
\begin{equation}
\label{spectral}
\begin{cases}
(-H(\lambda) + \theta \lambda^2 +r) Q(\theta,\lambda) +\alpha \partial^2_{\theta\theta}Q(\theta,\lambda) =0 \text{ for }\theta\in\Theta, &\\
\partial_\theta Q(\theta_m,\lambda)=\partial_\theta Q(\theta_M,\lambda)=0,
&\\ Q(\theta,\lambda)> 0 \text{ for all }\theta,\lambda, &\\
\int_\Theta Q(\theta,\lambda) \, d\theta =1.&
\end{cases}
\end{equation} 
Moreover,  we have that the map $\lambda\mapsto H(\lambda)$ is continuous, convex, and satisfies, for all $\lambda$,
\begin{equation}
\label{bdonc}
\lambda^2 \theta_m +r\leq H(\lambda) \leq \lambda^2 \theta_M +r.
\end{equation}
We have, for all $\lambda\in \rr$,
\begin{equation}
\label{Hc*}
\inf_{\alpha>0 }\left\{\alpha H\left(\frac{\lambda}{\alpha}\right)\right\} = |\lambda| c^*,
\end{equation}
where $c^*$ is as in Proposition \ref{prop:spectralBC}. Finally, we have
\begin{equation}
\label{boundsonc*}
2\sqrt{\theta_m r}\leq c^* \leq 2\sqrt{\theta_M r} .
\end{equation}
\end{prop}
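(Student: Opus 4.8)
The plan is to derive everything from Proposition \ref{prop:spectralBC} together with elementary facts about the principal eigenvalue of a self-adjoint Sturm--Liouville operator with Neumann boundary conditions. First I would observe that for each fixed $\lambda \in \rr$ the spectral problem (\ref{spectral}) is the eigenvalue problem for the operator $Q \mapsto -\alpha \partial_{\theta\theta}^2 Q - (\theta\lambda^2 + r)Q$ on $\Theta$ with Neumann boundary conditions, and $H(\lambda)$ is its principal (smallest) eigenvalue, with $Q(\cdot,\lambda)$ the associated positive eigenfunction, normalized to integrate to $1$. Existence, uniqueness, simplicity and positivity of the principal eigenfunction are standard (Krein--Rutman / Sturm--Liouville theory). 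For $\lambda \neq 0$ this is exactly the problem (\ref{spectralBC}) with the substitution $\cc(|\lambda|) = H(\lambda)/|\lambda|$, so the content of Proposition \ref{prop:spectralBC} transfers directly; for $\lambda = 0$ the operator is $-\alpha\partial_{\theta\theta}^2 - r$, whose principal eigenvalue is $r$ (eigenfunction constant), which matches the definition $H(0) = r$. This also shows $H$ is even, since replacing $\lambda$ by $-\lambda$ does not change (\ref{spectral}).

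Next, for the qualitative properties of $H$: convexity follows from the variational (Rayleigh quotient) characterization
\[
H(\lambda) = \inf_{\substack{\phi \in H^1(\Theta)\\ \phi \not\equiv 0}} \frac{\int_\Theta \big(\alpha |\partial_\theta \phi|^2 - (\theta\lambda^2 + r)\phi^2\big)\, d\theta}{\int_\Theta \phi^2\, d\theta},
\]
since $\lambda \mapsto \lambda^2$ is convex and an infimum of affine-in-$\lambda^2$... more carefully, one writes $H(\lambda) = \inf_\phi \big(A(\phi) - \lambda^2 B(\phi)\big)$ over normalized $\phi$ with $B(\phi) = \int \theta\phi^2 \geq 0$; each map $\lambda \mapsto A(\phi) - \lambda^2 B(\phi)$ is concave in $\lambda$, so this needs a small fix --- instead I would use that $H$ is an infimum over $\phi$ of the functions $\mu \mapsto A(\phi) - \mu B(\phi)$ which are affine in $\mu := \lambda^2$, hence $H$ is concave in $\lambda^2$; convexity in $\lambda$ then follows from a direct computation or from known results on the shape of $\cc(\lambda)$, and in any case convexity of $\lambda \mapsto H(\lambda)$ on all of $\rr$ can alternatively be quoted from the travelling-wave literature. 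Continuity of $H$ on $\rr\setminus\{0\}$ comes from continuity of $\cc$ (Proposition \ref{prop:spectralBC}); continuity at $0$ follows from the sandwich bound (\ref{bdonc}), which itself is immediate from (\ref{bdoncBC}) for $\lambda\neq 0$ and trivially at $\lambda = 0$, and forces $H(\lambda) \to r = H(0)$ as $\lambda \to 0$.

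It remains to prove the two identities (\ref{Hc*}) and (\ref{boundsonc*}). For (\ref{boundsonc*}), evaluating (\ref{bdoncBC}) along $\lambda = \lambda^*$ gives $(\lambda^*)^2\theta_m + r \leq \lambda^*\cc(\lambda^*) = c^*\lambda^*$... wait, $\cc(\lambda^*) = c^*$, so $(\lambda^*)^2 \theta_m + r \leq \lambda^* c^*$; minimizing the left side over $\lambda^* > 0$ (or rather: the inequality $\lambda\theta_m + r/\lambda \leq \cc(\lambda)$ holds for all $\lambda$, and its left side has minimum $2\sqrt{\theta_m r}$), and using $c^* = \inf_\lambda \cc(\lambda) \geq \inf_\lambda(\lambda\theta_m + r/\lambda) = 2\sqrt{\theta_m r}$ gives the lower bound; symmetrically $c^* \leq \inf_\lambda(\lambda\theta_M + r/\lambda) = 2\sqrt{\theta_M r}$ using the upper bound in (\ref{bdoncBC}) and noting $\inf_\lambda \cc(\lambda) \leq \cc(\bar\lambda)$ at the minimizer $\bar\lambda = \sqrt{r/\theta_M}$ of the upper envelope --- this second direction needs slightly more care but follows from $\lambda \cc(\lambda) \leq \lambda^2\theta_M + r$ evaluated at $\bar\lambda$. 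For (\ref{Hc*}), using $H(\lambda) = |\lambda|\cc(|\lambda|)$ for $\lambda \neq 0$ and substituting, $\alpha H(\lambda/\alpha) = \alpha \cdot \frac{|\lambda|}{\alpha}\cc(|\lambda|/\alpha) = |\lambda|\,\cc(|\lambda|/\alpha)$, so $\inf_{\alpha>0}\alpha H(\lambda/\alpha) = |\lambda| \inf_{\alpha > 0}\cc(|\lambda|/\alpha) = |\lambda|\inf_{\mu > 0}\cc(\mu) = |\lambda| c^*$ (the case $\lambda = 0$ is the trivial identity $0 = 0$, using that $\alpha H(0) = \alpha r$ does not attain an infimum... so actually (\ref{Hc*}) at $\lambda = 0$ reads $\inf_{\alpha>0}\alpha r = 0 = |0|c^*$, which holds).

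I expect the main obstacle to be the proof of convexity of $\lambda \mapsto H(\lambda)$: the naive Rayleigh-quotient argument gives concavity of $H$ as a function of $\lambda^2$, not convexity in $\lambda$, and one must either push through a direct second-derivative computation on the spectral problem (differentiating (\ref{spectral}) twice in $\lambda$ and using the Fredholm alternative / the fact that $H(\lambda)$ is a simple eigenvalue to get an explicit formula for $H''(\lambda)$ and check its sign) or invoke the corresponding result from \cite{BouinCalvez}. Everything else --- existence/uniqueness/positivity of $(H(\lambda), Q_\lambda)$, evenness, continuity, the bounds (\ref{bdonc}), and the two identities (\ref{Hc*})--(\ref{boundsonc*}) --- is a routine translation of Proposition \ref{prop:spectralBC} and elementary optimization.
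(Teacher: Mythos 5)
Your plan coincides with the paper's proof on every point except convexity: the paper likewise sets $Q(\theta,\lambda):=Q(\theta,|\lambda|)$ for $\lambda<0$ and $Q\equiv(\theta_M-\theta_m)^{-1}$ for $\lambda=0$, gets (\ref{bdonc}) and continuity at $0$ from (\ref{bdoncBC}), proves (\ref{Hc*}) by the same substitution $\alpha H(\lambda/\alpha)=|\lambda|\,\cc(|\lambda|/\alpha)$ together with the fact that $\cc$ attains its minimum at some $\lambda^*>0$, and proves (\ref{boundsonc*}) by squeezing $\cc(\lambda)$ between $\lambda\theta_m+r\lambda^{-1}$ and $\lambda\theta_M+r\lambda^{-1}$. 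On convexity, the difficulty you flag is an artifact of a sign error. From (\ref{spectral}), $H(\lambda)$ is the eigenvalue of $\alpha\partial^2_{\theta\theta}+(\theta\lambda^2+r)$ whose eigenfunction is positive, i.e.\ its \emph{largest} eigenvalue, so the correct variational formula is
\[
H(\lambda)=\sup_{\phi\in H^1(\Theta),\ \phi\not\equiv 0}\ \frac{\int_\Theta\bigl(-\alpha|\partial_\theta\phi|^2+(\theta\lambda^2+r)\phi^2\bigr)\,d\theta}{\int_\Theta\phi^2\,d\theta},
\]
a supremum over $\phi$ of functions $\lambda\mapsto -A(\phi)+\lambda^2B(\phi)$ with $B(\phi)\geq\theta_m>0$. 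Each of these is convex in $\lambda$ (not merely affine in $\lambda^2$), so $H$ is convex on all of $\rr$ with no issue at $\lambda=0$; your ``main obstacle'' dissolves and no second-derivative computation is needed. (The same sign slip appears when you call $r$ the principal eigenvalue of $-\alpha\partial^2_{\theta\theta}-r$; that eigenvalue is $-r$, and $H(0)=r$ is its negative.) The paper instead imports the decomposition $\lambda\cc(\lambda)=\lambda^2\theta_M+r-\gamma(\lambda)$ from the proof in \cite{BouinCalvez} to get convexity on $(0,\infty)$, and then must invoke (\ref{bdonc}) to rule out a kink at $0$ when reflecting evenly --- a step your plan omits if you take the quote-from-the-literature route, since \cite{BouinCalvez} only treats $\lambda>0$.
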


\begin{proof}[Proof of Proposition \ref{prop:spectral}]
First, let us verify that $H$ is continuous. 
Since the bound (\ref{bdoncBC}) holds for all $\lambda>0$,  we have 
\[
\lim_{\lambda \rightarrow 0 } \lambda \cc(\lambda)= r,
\]
and in particular this limit is finite, so that $\lambda\mapsto H(\lambda)$ is continuous on all of $\rr$. We also note that (\ref{bdonc}) does indeed hold for all $\lambda$. 
We remark that $H$ is the even reflection of  the map $\lambda\mapsto \lambda \cc(\lambda)$.

For $\lambda<0$, we define $Q(\theta, \lambda)$ to be $Q(\theta, |\lambda|)$, where the latter is the solution of (\ref{spectralBC}) corresponding to $|\lambda|$. If $\lambda=0$, we see that $Q\equiv (\theta_M-\theta_m)^{-1}$ solves (\ref{spectral}). Hence, (\ref{spectral}) holds for all $\lambda$.

In addition, according to the proof of Proposition \ref{spectralBC}, the map $\lambda \cc(\lambda)$ is given by,
\begin{equation}
\label{cAndGamma}
\lambda \cc(\lambda)= \lambda^2\theta_M +r - \gamma(\lambda),
\end{equation}
where $1/\gamma(\lambda)$ is the eigenvalue of a certain eigenvalue problem with parameter $\lambda$. From this characterization, one can check that  $1/\gamma(\lambda)$ is convex in $\lambda$. Hence $\lambda \cc(\lambda)$ is also convex in $\lambda$.  The even reflection of a convex function is not in general convex, due to a possible issue at 0. However, the fact that $H$ satisfies (\ref{bdonc}) allows us to deduce that $H$ is indeed convex.

Finally, we  verify (\ref{Hc*}). According to the definition of $H$ we have,
\begin{align*}
\inf_{\alpha>0}\left\{\alpha H\left(\frac{\lambda}{\alpha}\right)\right\} &=
\inf_{\alpha>0}\left\{\alpha |\frac{\lambda}{\alpha}| \cc\left(|\frac{\lambda}{\alpha}|\right) \right\} = |\lambda| \inf_{\alpha>0 }\left\{ \cc\left(|\frac{\lambda}{\alpha}|\right) \right\}.
\end{align*}
According to Proposition \ref{prop:spectralBC}, $\cc$ achieves its minimum, $c^*$, at some $\lambda^*>0$. Thus, the infimum in the previous line is achieved at $\alpha= |\lambda|/\lambda^*$. Hence we have established (\ref{Hc*}).

To verify (\ref{boundsonc*}), we use (\ref{bdoncBC}) to find,
\[
\lambda \theta_m +r\lambda^{-1}\leq \cc(\lambda) \leq \lambda \theta_M +r\lambda^{-1}.
\]
Hence the minimum value of $\cc(\lambda)$ must be between the minimum values of the functions on the left-hand side and the right-hand side. These are exactly $2\sqrt{r\theta_m}$ and  $2\sqrt{r\theta_M}$, respectively. This establishes (\ref{boundsonc*}) and completes the proof of the proposition.

\end{proof}

\begin{rem}
\label{rem:interpret}
We give a further (slightly informal) interpretation of Theorem \ref{result on n}. According to \cite[Theorem 1.1]{EvansSouganidis}, the behavior of solutions to (\ref{KPP}) is characterized by a solution $v$ of the Hamilton-Jacobi equation
\begin{equation}
\label{forKPP}
\max\{v_t-\beta \cdot (v_x)^2 -r,v\} =0.
\end{equation}
(We remark that this is the negative of the equation that appears in \cite{EvansSouganidis}; we write it this way to be consistent with the signs employed in the rest of this paper.)  
Let us compare  (\ref{forKPP}) and  (\ref{HJ}). 
We see that both  are of the form $\max\{ v_t -H(v_x),v\}$, but for different Hamiltonians $H$. It is this difference that captures the effect of the trait. Indeed, according to Proposition \ref{prop:spectral}, the term $H (u_x)$ satisfies (\ref{bdonc}). 
%\begin{equation}
%\label{cprop}
%\theta_M (u_x)^2-r \leq  H(u_x) \leq -\theta_m (u_x)^2-r.
%\end{equation}
Thus, we see that the term $H(u_x) $ in (\ref{HJ})  is ``like a quadratic", but of different size than the quadratic in  (\ref{forKPP}). In addition, according to (\ref{cAndGamma}) and the definition of $H$, we have
\[
H(\lambda) = \lambda^2\theta_M +r-\gamma(\lambda),
\]
where $\gamma(\lambda)$ is \emph{positive} for all $\lambda$. Hence the Hamiltonian in  (\ref{HJ}) is never exactly a quadratic, and hence must be different from that of (\ref{forKPP}). 

Since (\ref{HJ}) and (\ref{forKPP}) characterize the behavior of populations with and without a motility trait, respectively,  we interpret our results as evidence that the presence of the trait does affect the asymptotic behavior of a population.
\end{rem}

\section{Supremum bound}
\label{sec:sup bd} 
This section is devoted to the proof of:
\begin{thm}
\label{prop:supbd} Suppose $n$  is nonnegative, twice differentiable on $\rr\times \Theta\times (0,\infty)$ and satisfies (\ref{n}) in the classical sense. Assume that $\Theta$ satisfies (A\ref{asm Theta}) and the initial condition  $n_0$  satisfies (A\ref{asm n0}). 
There exists a constant $C$ that depends only on $\theta_m$, $\theta_M$, $ \alpha$, $r$ and $\eta$ such that
\begin{equation}
\label{supbdconclusion}
\sup_{\rr\times\Theta\times(0,\infty)} n \leq C\max \left\{ 1, \sup_{\rr\times \Theta} n_0,  (\norm{n_0(x,a(\theta))}{2+\eta,\rr\times\rr})^{\frac{2}{\eta}}\right\}.
\end{equation}
\end{thm}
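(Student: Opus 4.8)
The plan is to obtain the supremum bound in three stages: (i) get a short-time bound showing $n$ cannot blow up instantaneously from its initial data, (ii) for large times, derive an upper bound for the nonlocal density $\rho$ by an averaging trick, and (iii) upgrade the $\rho$-bound to a pointwise bound on $n$ using interior elliptic regularity in the $\theta$-variable together with a heat-kernel representation in $x$. Throughout one exploits that $n \ge 0$, so that wherever $\rho \ge 1$ the reaction term $rn(1-\rho)$ is nonpositive, and where $\rho < 1$ the growth is at worst that of the linear equation $\partial_t n = \theta\partial_{xx}^2 n + \alpha\partial_{\theta\theta}^2 n + rn$.

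\textbf{Stage 1 (short time).} First I would construct an explicit supersolution on a time interval $(0,T_0]$ with $T_0$ depending only on $r$: since $1-\rho \le 1$, the function $n$ is a subsolution of the linear problem $\partial_t w = \theta\partial_{xx}^2 w + \alpha\partial_{\theta\theta}^2 w + rw$ with Neumann conditions in $\theta$ and data $n_0$. By the maximum principle for \emph{this} linear problem (which does hold), $n(x,\theta,t) \le e^{rt}\sup n_0$, giving a bound $\le e^{rT_0}\sup_{\rr\times\Theta} n_0$ on $[0,T_0]$. Actually for later times this crude bound is useless because $\sup n_0$ need not control anything after competition kicks in; its role is only to handle $t \le T_0$ and to give us a starting point. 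The assumption (A2) that $n_0(x,a(\theta))$ is $C^{2,\eta}$ enters here and in Stage 3 to justify the classical/heat-kernel manipulations up to $t=0$.

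\textbf{Stage 2 (bound on $\rho$).} This is the heart of the matter and the main obstacle, because $\rho$ does \emph{not} satisfy a closed scalar equation with a good sign — the difficulty Turanova emphasizes in the introduction. Integrating (\ref{n}) in $\theta$ and using the Neumann conditions kills the $\alpha\partial_{\theta\theta}^2$ term, leaving
\[
\partial_t \rho = \int_\Theta \theta\,\partial_{xx}^2 n\, d\theta + r\rho(1-\rho),
\]
and the remaining integral is \emph{not} $\partial_{xx}^2(\text{something in }\rho)$ because of the weight $\theta$ — this is exactly why the constant-diffusion argument of \cite{BouinMirrahimi} fails. I would instead borrow the averaging technique of \cite[Theorem 1.2]{HamelRyzhik}: integrate the equation (or a suitable moment of it) in $x$ over a large window, or test against a slowly varying spatial weight, to control the troublesome term by lower-order quantities, and combine this with the logistic structure $r\rho(1-\rho)$ and a heat-kernel estimate for the $x$-diffusion (whose coefficient $\theta$ lies in $[\theta_m,\theta_M]$, hence is uniformly elliptic) to close a Gronwall-type inequality. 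The output should be a bound $\rho(x,t) \le C$ for $t \ge T_0$, with $C$ depending only on $\theta_m,\theta_M,\alpha,r$ (and, through the short-time piece feeding into it, possibly $\sup n_0$ and the $C^{2+\eta}$ norm of $n_0$, matching the right-hand side of (\ref{supbdconclusion})).

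\textbf{Stage 3 (from $\rho$ to $n$).} Once $\|\rho\|_\infty \le C$ we have that $n$ solves, for fixed $x$, an equation in $(\theta,t)$ of the form $\alpha\partial_{\theta\theta}^2 n = \partial_t n - \theta\partial_{xx}^2 n - rn(1-\rho)$, but to decouple the $x$-derivatives I would rather view $n$ as a subsolution of the linear parabolic equation $\partial_t n \le \theta\partial_{xx}^2 n + \alpha\partial_{\theta\theta}^2 n + C' n$ (using $1-\rho \le 1$ again), and then bound $n$ pointwise by Duhamel/heat-kernel against the data at time $t-1$: $n(x,\theta,t) \le \int G(x,\theta;y,\vartheta;1)\, n(y,\vartheta,t-1)\,dy\,d\vartheta \cdot e^{C'}$, where $G$ is the Neumann heat kernel for $\theta\partial_{xx}^2+\alpha\partial_{\theta\theta}^2$. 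Since $\int n(y,\vartheta,t-1)\,d\vartheta = \rho(y,t-1) \le C$, integrating out $\vartheta$ first and using the integrability and smoothing of $G$ in $x$ (finite $\int G\,dy$, boundedness of $\int \sup_\vartheta G\, dy$) converts the $L^1_\vartheta$ control of the data into an $L^\infty$ bound on $n$ one unit of time later; this is where interior elliptic/parabolic estimates in the bounded $\theta$-interval and the heat-kernel decay in $x$ are used. Combining Stages 1--3 gives the uniform bound on all of $\rr\times\Theta\times(0,\infty)$ with the stated dependence of the constant, which is Theorem \ref{prop:supbd} (and hence Theorem \ref{sup bd intro}). I expect Stage 2 — making the averaging argument actually close despite the $\theta$-weighted diffusion — to be by far the most delicate point, and I would spend most of the effort there, likely iterating a bootstrap where a preliminary polynomial-in-$t$ bound on $\rho$ is improved to a constant bound.
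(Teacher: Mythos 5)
Your scaffolding matches the paper's: a linear comparison argument for short times, an averaging step in the spirit of Hamel--Ryzhik to handle the interaction between the nonlocal reaction and the $\theta$-dependent diffusion, and an Aronson-type heat-kernel estimate to convert an $L^1$-in-$\theta$ bound into an $L^\infty$ bound one unit of time later. Your Stages 1 and 3 are essentially the paper's argument (the paper works with the reflected/periodic extension $\bn(x,\theta,t)=n(x,a(\theta),t)$ on all of $\rr^2$ and Aronson's whole-space kernel bounds rather than a Neumann kernel, but that is a cosmetic difference).

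The gap is in Stage 2, which you rightly flag as the crux but for which the mechanisms you propose --- integrating in $x$ over large windows, closing a Gronwall inequality in time, bootstrapping from a polynomial-in-$t$ bound --- are not the ones that work; in particular a time-Gronwall cannot produce a bound uniform over $t\in(0,\infty)$, and a bound on $\rho$ alone cannot be obtained first because controlling $\int_\Theta\theta\,\partial^2_{xx}n\,d\theta$ requires regularity of $n$, which in turn requires the very supremum bound being sought. The paper resolves this circularity by averaging in $\theta$ over windows of small width $\sigma$, setting $v(x,\zeta,t)=\int_{\zeta-\sigma/2}^{\zeta+\sigma/2}\bn\,d\theta$, and arguing by contradiction at the first time $T$ at which $\sup n$ reaches a threshold $M$. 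The two missing ingredients are: (i) a Schauder estimate for the extension, $\norm{\bn}{2+\eta,\rr\times\rr\times(0,T]}\le \tilde C\,(M^{2+\eta/2}+\norm{n_0(x,a(\theta))}{2+\eta,\rr\times\rr})$, which is what allows one to compare the weighted term $\int a(\theta)\bn_{xx}\,d\theta$ at a maximum point of $v$ with the unweighted $v_{xx}\le 0$, at the cost of an error of order $\sigma^{2+\eta}M^{2+\eta/2}$; and (ii) the choice $\sigma\sim M^{-(2+\eta/2)/(2+\eta)}$, which makes that error $O(1)$ so that the logistic structure forces $\sup v\le 3$, after which the heat-kernel step yields $M\le C\sigma^{-1}\sim M^{(4+\eta)/(4+2\eta)}$. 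Since $(4+\eta)/(4+2\eta)<1$ this inequality is self-improving and bounds $M$, producing the exponent $2/\eta$ in (\ref{supbdconclusion}). Without the quantitative interplay between the Schauder exponent $2+\eta/2$ and the window exponent $2+\eta$ the averaging argument does not close; this is precisely the point made in Remark \ref{remark:choice of nonlinearity}.
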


We briefly explain notation for norms and seminorms, which we will be using only in this section. For $U\subset \rr^{d+1}$ we denote: 
\[
\semi{u}{\eta,U} = \sup_{(x,t), (y,s)\in U}\frac{|u(x,t)-u(y,s)|}{(|x-y|+|s-t|^{1/2})^{\eta}}, \   \    \   \norm{u}{\eta,U}= \linfty{u}{U}+\semi{u}{\eta,U},
\]
and
\[
\norm{u}{2+\eta,U}= \linfty{u}{U}+\sum_{i=1}^d\linfty{u_{x_i}}{U} +\linfty{u_t}{U}+ \sum_{i,j=1}^{d}\norm{u_{x_ix_j}}{\eta, U} +\norm{u_t}{\eta, U} .
\]

We present the supremum bound for solutions of (\ref{nep}):
\begin{cor}
\label{supbd:cor}
Assume (A\ref{asump C2}), (A\ref{asm n0}) and (A\ref{asm Theta}). 
There exists a constant $C$ that depends only on $\theta_m$, $\theta_M$, $ \alpha$, $r$, $\eta$, and $\norm{n_0(x,a(\theta))}{2+\eta,\rr\times\rr}$ such that for all $0<\ep<1$,
\[
\sup_{\rr\times\Theta\times(0,\infty)} n^\ep \leq C.
\]
\end{cor}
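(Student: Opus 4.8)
The plan is to deduce Corollary~\ref{supbd:cor} directly from Theorem~\ref{prop:supbd} by a rescaling argument, together with the observation that the relevant norms of the initial data are preserved (or improved) under that rescaling. First I would fix $\ep\in(0,1)$ and, given a solution $n^\ep$ of (\ref{nep}), define the function
\[
n(x,\theta,t) = n^\ep(\ep x, \theta, \ep t) \qquad\text{for }(x,\theta,t)\in\rr\times\Theta\times(0,\infty).
\]
A direct computation shows that $n$ solves (\ref{n}) in the classical sense: differentiating, $\partial_t n = \ep\,(\partial_t n^\ep)(\ep x,\theta,\ep t)$, $\partial^2_{xx}n = \ep^2 (\partial^2_{xx}n^\ep)(\ep x,\theta,\ep t)$, and $\partial^2_{\theta\theta}n = (\partial^2_{\theta\theta}n^\ep)(\ep x,\theta,\ep t)$, so dividing the first line of (\ref{nep}) by $\ep$ and substituting $(\ep x,\theta,\ep t)$ for $(x,\theta,t)$ yields exactly the first line of (\ref{n}); the Neumann conditions in $\theta$ are unaffected by the rescaling of $x$ and $t$; and the nonlocal term transforms correctly since $\rho(x,t) = \int_\Theta n(x,\theta,t)\,d\theta = \rho^\ep(\ep x, \ep t)$. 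Moreover $n\geq 0$ since $n^\ep\geq 0$. The new initial datum is $n(x,\theta,0) = n_0(\ep x,\theta)$.

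The key point is then that the rescaled initial datum satisfies (A\ref{asm n0}) with a constant no worse than that of $n_0$, uniformly in $\ep\in(0,1)$. Indeed, writing $g(x,\theta)=n_0(\ep x,\theta)$ and $a(\cdot)$ for the map from (A\ref{asm n0}), we have $g(x,a(\theta)) = n_0(\ep x, a(\theta))$; since $\ep<1$, differentiating in $x$ brings down factors of $\ep^k$ with $k\ge 1$, so $\linfty{\partial_x^k g(\cdot,a(\cdot))}{\rr\times\rr}\leq \linfty{\partial_x^k n_0(\cdot,a(\cdot))}{\rr\times\rr}$, and likewise the H\"older seminorm in $x$ is multiplied by $\ep^\eta\le 1$; derivatives in $\theta$ and the $L^\infty$ norm itself are unchanged. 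Hence
\[
\linfty{g(\cdot,a(\cdot))}{\rr\times\rr}\ \le\ \linfty{n_0(\cdot,a(\cdot))}{\rr\times\rr},\qquad
\norm{g(x,a(\theta))}{2+\eta,\rr\times\rr}\ \le\ \norm{n_0(x,a(\theta))}{2+\eta,\rr\times\rr},
\]
and similarly $\sup_{\rr\times\Theta} g \le \sup_{\rr\times\Theta} n_0$. In particular $g$ lies in $C^{2,\eta}(\rr\times\rr)$ after composition with $a$, so (A\ref{asm n0}) holds for $n(\cdot,\cdot,0)=g$, with $\eta$ unchanged.

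Now I would simply apply Theorem~\ref{prop:supbd} to $n$. It gives a constant $C_0$ depending only on $\theta_m,\theta_M,\alpha,r,\eta$ such that
\[
\sup_{\rr\times\Theta\times(0,\infty)} n \ \le\ C_0 \max\left\{1,\ \sup_{\rr\times\Theta} n_0,\ \big(\norm{n_0(x,a(\theta))}{2+\eta,\rr\times\rr}\big)^{2/\eta}\right\},
\]
where I have used the bounds of the previous paragraph to replace the norms of $g$ by those of $n_0$ on the right-hand side. Since $\sup n^\ep = \sup n$ (the rescaling is a bijection of $\rr\times\Theta\times(0,\infty)$ onto itself), the right-hand side is a finite constant $C$ depending only on $\theta_m,\theta_M,\alpha,r,\eta$ and $\norm{n_0(x,a(\theta))}{2+\eta,\rr\times\rr}$ (note $\sup_{\rr\times\Theta} n_0 \le \norm{n_0(x,a(\theta))}{2+\eta,\rr\times\rr}$ as well, so one could even drop the middle term), and it is independent of $\ep\in(0,1)$. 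This completes the proof. The only step requiring any care — and the one I'd expect to be the mildly technical point rather than a genuine obstacle — is checking that composition with $a(\theta)$ commutes appropriately with the $x$-rescaling so that the regularity hypothesis (A\ref{asm n0}) is genuinely inherited with a uniform constant; everything else is bookkeeping.
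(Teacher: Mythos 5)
Your proposal is correct and is essentially identical to the paper's own proof: the paper also sets $n(x,\theta,t)=n^\ep(\ep x,\theta,\ep t)$, notes that the rescaled initial datum $\tilde n_0(x,\theta)=n_0(\ep x,\theta)$ satisfies (A2) with $\sup\tilde n_0=\sup n_0$ and $\norm{\tilde n_0(x,a(\theta))}{2+\eta,\rr\times\rr}\le\norm{n_0(x,a(\theta))}{2+\eta,\rr\times\rr}$, and then applies Theorem \ref{prop:supbd}. The final point you flag as needing care is immediate since $a$ acts only in $\theta$ while the rescaling acts only in $x$ and $t$.
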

Let us explain how Corollary \ref{supbd:cor} follows  from Theorem \ref{prop:supbd}.
\begin{proof}[Proof of Corollary \ref{supbd:cor}]
Let us fix some $0<\ep<1$ and suppose $n^\ep$ satisfies (\ref{nep}) in the classical sense and $n_0$ satisfies (A\ref{asm n0}). Let us define $n(x,\theta,t)=n^\ep(\ep x, \theta,\ep t)$. Then $n$ satisfies (\ref{n}) with initial data $\tilde{n}_0(x,\theta):=n_0 (\ep x, \theta)$. We have  that $\tilde{n}_0$ satisfies (A\ref{asm n0}). Therefore, according to Theorem \ref{prop:supbd}, we have that the estimate (\ref{supbdconclusion}) holds, with $\tilde{n}_0$ instead of $n_0$ on the right-hand side. Since we have $\sup_{\rr\times \Theta} \tilde{n}_0 = \sup_{\rr\times \Theta} n_0$ and $\norm{\tilde{n}_0(x,a(\theta))}{2+\eta,\rr\times\rr} \leq \norm{n_0(x,a(\theta))}{2+\eta,\rr\times\rr}$, we obtain the conclusion of Corollary \ref{supbd:cor}.
\end{proof}

\subsection{Outline}
\label{subsec:outline}
We introduce the following piecewise function $a(\theta):\rr\rightarrow [\theta_m,\theta_M]$. For any $\theta\in \rr$ let $k\in \zz$ be such that $\theta-\theta_m\in [k(\theta_M-\theta_m),(k+1)(\theta_M-\theta_m))$. We define $a(\theta)$ by
\begin{equation}
\label{a}
a(\theta)=
\begin{cases}
\theta- k(\theta_M-\theta_m) \text{  if  $k$ is even},\\
(k+1)(\theta_M-\theta_m) -\theta \text{ if $k$ is odd}.
\end{cases}
\end{equation}

\begin{figure}

\caption{On the left is a cartoon graph of the extension $\theta\mapsto a(\theta)$. On the right is a cartoon graph of the extension $\theta\mapsto \bn(x,\theta,t)$. The parts of the graphs that are in black and very thick  represent the original functions $\theta\mapsto \theta$ and $\theta\mapsto n(x,\theta,t)$ on the domain $\Theta$. }
\label{fig:aAndn}
\centering \includegraphics[trim = 3cm 23.5cm 0cm 2.2cm  , clip, scale=1]{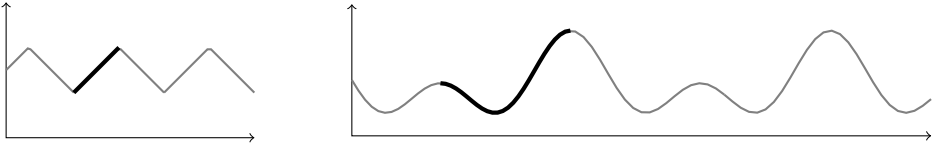}\end{figure}

We use $a$ to extend $\theta\mapsto n(x,\theta,t)$ to a function $\bar{n}(x,\theta,t)=n(x,a(\theta),t)$ defined for all $\theta\in \rr$. This is equivalent to extending $\theta\mapsto n(x,\theta,t)$ by even reflection across $\theta=\theta_M$ (since $n$ satisfies Neumann boundary conditions, this gives a periodic function on $(\theta_m, \theta_m+2(\theta_M-\theta_m))$ with derivative zero on the boundary) and then by periodicity to the rest of $\rr$.  Please see Figure \ref{fig:aAndn}. 

If we use $a$ to extend an \emph{arbitrary} twice differentiable function on $\Theta$ with derivative zero at the boundary to all of $\rr$, the result would still have continuous first derivative but would not necessarily be twice differentiable in $\theta$. However, it turns out that since $n$ solves the equation (\ref{n}), the  extension $\bar{n}$ is in $C^{2,\eta}$. Moreover, the H\"older norm of the second derivatives of $\bn$ is bounded in terms of the supremum of $n$. We state this precisely in  Proposition \ref{deriv bd in terms of M} in   Subsection \ref{subsec:Holder}.

Instead of working directly with $\bn$, we work with its averages in $\theta$ on small intervals: we define the function $v$ by
\[
v(x,\zeta,t)=\int_{\zeta-\sigma/2}^{\zeta+\sigma/2} \bn(x,\theta,t)\,d\theta.
\]
Our idea to use these averages came  from reading the proof of \cite[Theorem 1.2]{HamelRyzhik}. We have that $v$ is twice differentiable on $\rr^2\times (0,\infty)$ and  satisfies 
\[
v_t(x,\zeta,t)=\int_{\zeta-\sigma/2}^{\zeta+\sigma/2} a(\theta) \bn_{xx}(x,\theta,t)\,d\theta + \alpha v_{\zeta\zeta}(x,\zeta,t) + r v(x,\zeta,t)(1-\rho(x,t))
\]
for all $x\in \rr$, $\zeta\in \rr$, and $t\in (0,\infty)$. 

The first step in our proof of the supremum bound on $n$ is to obtain a bound  on $\sup_{x,\zeta,t}v(x,\zeta,t)$ for some $\sigma>0$. 
We then use the  bound on $v$ to establish the supremum bound on $n$ itself. In other words, we use an $L^1$ bound for $n$ to obtain an $L^\infty$ bound. For this, we formulate Proposition \ref{prop:bdfromL1} in Subsection \ref{subsec:L1Linf}. The proof of Proposition \ref{prop:bdfromL1} uses  certain
estimates for the heat kernel associated to the operator $\partial_t - a(\theta) \partial^2_{xx} -\partial^2_{\theta \theta}$. These estimates  say that this kernel is like the  kernel for the heat equation and were established by Aronson \cite{Aronson}. 

%We will use the observation that $n(x,\theta,t)e^{-t}$ is a subsolution of a linear  equation with no right-hand side and apply this Proposition with initial data $n(x,\theta,t^*)$ at a certain fixed time $t^*$.

\subsection{H\"older estimates for $n$}
\label{subsec:Holder}

%The proof of Proposition \ref{deriv bd in terms of M}  employs standard estimates and interpolation inequalities for seminorms.
We will use the following  result on the solvability in $C^{2,\eta}$ of parabolic equations:
\begin{prop}
\label{classical}
Let us take $\eta \in (0,1)$ and suppose the coefficients $a^{ij}$ are uniformly elliptic, $a^{ij}\in C^{\eta}(\rr^2)$, $f\in C^{\eta}(\rr^2\times (0,T))$, and $u_0\in C^{2+\eta}(\rr^2)$. Then the initial value problem
\begin{equation*}
\begin{cases}
u_t- a^{ij}u_{x_ix_j} = f\text{ on } \rr^2\times (0,T),\\
u=u_0 \text{ on } \rr^2\times \{t=0\}
\end{cases}
\end{equation*}
has a unique solution $u\in C^{2+\eta}(\rr^2\times [0,T))$. Moreover, there exists a constant $C$ that depends on $\eta$, the ellipticity constant of $a^{ij}$, and $\norm{a^{ij}}{\eta, \rr^2}$ such that 
\begin{equation}
\label{estimateclassprop}
\norm{u}{2+\eta,\rr^2\times [0,T]}\leq C(\norm{f}{\eta, \rr^2\times [0,T]} +\norm{u}{\eta,\rr^2\times[0,T]}+\norm{u_0}{2+\eta, \rr^2}).
\end{equation}
\end{prop}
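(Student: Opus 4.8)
The plan is to prove Proposition \ref{classical} by classical parabolic Schauder theory, specialized to the whole space $\rr^2$ (so there is no lateral boundary condition to contend with, only the initial trace at $t=0$). The first step is to reduce to the case of zero initial data. Let $P$ be the solution of the heat equation $P_t-\Delta P=0$ with $P(\cdot,0)=u_0$, given by convolution of $u_0$ with the Gaussian kernel; standard heat-kernel estimates give $P\in C^{2+\eta}(\rr^2\times[0,T])$ with $\norm{P}{2+\eta,\rr^2\times[0,T]}\leq C\norm{u_0}{2+\eta,\rr^2}$. Writing $u=P+w$, the function $w$ must solve $w_t-a^{ij}w_{x_ix_j}=f+(a^{ij}-\delta^{ij})P_{x_ix_j}=:g$ with $w(\cdot,0)=0$, and since $a^{ij}-\delta^{ij}\in C^\eta(\rr^2)$ is bounded and $P_{x_ix_j}\in C^\eta(\rr^2\times[0,T])$, we have $g\in C^\eta(\rr^2\times[0,T])$ with $\norm{g}{\eta}$ controlled by $\norm{f}{\eta}$ and $\norm{u_0}{2+\eta}$. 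So it suffices to solve the problem with zero initial data and to establish the a priori bound $\norm{w}{2+\eta}\leq C(\norm{g}{\eta}+\norm{w}{\eta})$, after which $P$ can be added back.

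The a priori Schauder estimate is the technical heart. For a frozen, constant-coefficient operator $\partial_t-A^{ij}\partial_{x_ix_j}$ with $A$ a fixed positive-definite matrix, one represents the solution with zero initial data against the anisotropic Gaussian fundamental solution and differentiates twice; singular-integral (or Campanato-type) estimates yield $\norm{D^2_xw}{\eta}+\norm{w_t}{\eta}\leq C\norm{g}{\eta}$ with $C$ depending only on $\eta$ and the ellipticity of $A$. To handle variable $C^\eta$ coefficients I would freeze $a^{ij}$ at a point $z_0$, rewrite the equation as $w_t-a^{ij}(z_0)w_{x_ix_j}=g+(a^{ij}(z)-a^{ij}(z_0))w_{x_ix_j}$, and exploit the smallness of $\sup|a^{ij}(\cdot)-a^{ij}(z_0)|$ on a small parabolic cylinder together with the constant-coefficient estimate; a covering/partition-of-unity argument over $\rr^2\times[0,T]$ and absorption of the perturbative term then give the global bound $\norm{w}{2+\eta,\rr^2\times[0,T]}\leq C(\norm{g}{\eta}+\norm{w}{\eta})$, which is exactly \eqref{estimateclassprop}. (Here the zeroth-order and Hölder-seminorm parts of $\norm{w}{\eta}$ on the right can be kept as stated, or controlled by the maximum principle and interpolation plus a Gronwall argument on finite time intervals.)

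With the a priori estimate in hand, existence follows by the method of continuity: connect $L_1=\partial_t-a^{ij}\partial_{x_ix_j}$ to $L_0=\partial_t-\Delta$ through $L_s=\partial_t-\big((1-s)\delta^{ij}+s\,a^{ij}\big)\partial_{x_ix_j}$, $s\in[0,1]$. Each $L_s$ has $C^\eta$ coefficients with ellipticity constant and Hölder norm bounded uniformly in $s$, so the a priori estimate holds with a single constant $C$ for all $s$. Solvability for $L_0$ is known from the explicit heat representation, and the standard functional-analytic argument — the solution map, viewed between $\{w\in C^{2+\eta}:w(\cdot,0)=0\}$ and $C^\eta$, is bounded below uniformly in $s$, hence the set of $s$ for which $L_s$ is onto is open, closed, and nonempty in $[0,1]$ — gives solvability for $L_1$. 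Uniqueness of bounded solutions is immediate from the parabolic maximum principle applied to the difference of two solutions (or directly from the a priori estimate). I expect the main obstacle to be the a priori Schauder estimate, specifically the freezing-of-coefficients localization that passes from the Gaussian-kernel estimate to variable Hölder coefficients; everything else is bookkeeping. Since this is entirely classical, in the paper one would most naturally cite a standard reference (e.g.\ Ladyzhenskaya--Solonnikov--Uraltseva, Friedman, or Krylov) rather than reproduce the argument.
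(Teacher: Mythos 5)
Your proposal is correct, and the student-level verdict you reach at the end -- that one would cite a standard reference rather than reproduce the argument -- is in fact exactly what the paper does. The paper's proof is two citations plus a triangle inequality: existence and uniqueness in $C^{2+\eta}$ are taken from Ladyzenskaja--Solonnikov--Uralceva (Chapter IV, Theorem 5.1), and the estimate is taken from Krylov's Theorem 9.2.2, which is stated for the operator $a^{ij}\partial_{x_ix_j}-\partial_t-1$ and reads $\norm{u}{2+\eta}\leq C(\norm{a^{ij}u_{x_ix_j}-u_t-u}{\eta}+\norm{u_0}{2+\eta})$; substituting the equation and splitting $\norm{f-u}{\eta}\leq\norm{f}{\eta}+\norm{u}{\eta}$ produces precisely the form (\ref{estimateclassprop}), which explains why the lower-order term $\norm{u}{\eta}$ appears on the right-hand side of the statement. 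Your route instead reconstructs the classical Schauder theory behind those references: heat-flow reduction to zero initial data, the constant-coefficient kernel estimate, freezing of coefficients with absorption, and the method of continuity. That sketch is sound (the one step you should not wave at is uniqueness: on the unbounded domain $\rr^2\times(0,T)$ the maximum principle for the difference of two bounded solutions requires a Phragm\'en--Lindel\"of-type comparison, e.g.\ with $\ep(|x|^2+Kt)$, though this is routine for uniformly parabolic operators with bounded coefficients). What your approach buys is self-containedness and, after the Gronwall/interpolation absorption you mention, the stronger estimate $\norm{u}{2+\eta}\leq C(\norm{f}{\eta}+\norm{u_0}{2+\eta})$ without the $\norm{u}{\eta}$ term; what the paper's approach buys is brevity, and the weaker form of the estimate it records is all that is needed downstream, since in the proof of Proposition \ref{deriv bd in terms of M} the term $\norm{\bn}{\eta}$ is in any case re-estimated by interpolation against $\norm{\bn}{2+\eta}$ and the supremum bound $M$.
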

\begin{proof}
That there exists a unique solution $u\in C^{2,\eta}$ is part of the statement of Ladyzenskaja et al \cite[Chapter IV Theorem 5.1]{Ladyzenskaja}. The estimate (\ref{estimateclassprop}) follows from Krylov \cite[Theorem 9.2.2]{Krylovbook}, which  asserts that $u$ obeys the estimate,
\begin{equation}
\label{estpreclass}
\norm{u}{2+\eta,\rr^2\times [0,T]}\leq C_1(\norm{ a^{ij}u_{x_ix_j} - u_t-u}{\eta, \rr^2\times [0,T]} +\norm{u_0}{2+\eta, \rr^2}),
\end{equation}
where the constant $C_1$ depends only on $\eta$, the ellipticity constant of $a^{ij}$ and $\norm{a^{ij}}{\eta, \rr^2}$. We use the equation that $u$ satisfies to bound from above the first term on the right-hand side of (\ref{estpreclass}) and find,
\[
\norm{ a^{ij}u_{x_ix_j} - u_t-u}{\eta, \rr^2\times [0,T]} = \norm{ f-u}{\eta, \rr^2\times [0,T]}\leq \norm{f}{\eta, \rr^2\times [0,T]} +\norm{u}{\eta,\rr^2\times[0,T]}.
\]
Using the previous line to bound from above the first term on the right-hand of (\ref{estpreclass})  yields the estimate (\ref{estimateclassprop}).
\end{proof}

In addition, we need the following lemma:
\begin{lem}
\label{lem:unique}
Assume $\Theta$ satisfies (A\ref{asm Theta}).  Given $T>0$ and continuous and non-negative functions $\rho(x,t)$ and $u_0(x,\theta)$, the solution $u$ of 
\begin{equation}
\label{eqn:ulem}
\begin{cases}
u_t=\theta u_{xx}+ \alpha u_{\theta\theta} +u(1-\rho) \text{ on }\rr\times\Theta\times (0,T),\\
 u_\theta(x,\theta_m,t)=u_\theta(x,\theta_M,t)=0 \text{ for all } x\in \rr, t\in (0,T),\\
u(x,\theta,0)=u_0(x,\theta) \text{ on } \rr\times \Theta
\end{cases}
\end{equation}
is unique.
\end{lem}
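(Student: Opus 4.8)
The plan is to prove uniqueness for \eqref{eqn:ulem} by a standard linear-equation argument: if $u_1$ and $u_2$ are two solutions with the same data, then $w = u_1 - u_2$ solves the \emph{linear} problem $w_t = \theta w_{xx} + \alpha w_{\theta\theta} + (1-\rho)w$ on $\rr\times\Theta\times(0,T)$ with Neumann conditions on $\partial\Theta$ and $w(\cdot,\cdot,0)=0$. Note that because $\rho$ is fixed (the same for both solutions — it is \emph{given} data in this lemma, not the nonlocal term evaluated at $u_i$), this equation really is linear in $w$, so the nonlocality that obstructs the maximum principle for \eqref{n} is not present here. The task is thus to show the only solution of this linear Neumann problem with zero initial data is $w\equiv 0$, and the only subtlety is that $x$ ranges over all of $\rr$, so one needs a growth assumption or an argument that localizes in $x$.

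First I would reduce to an $L^2$-type energy estimate. Fix $T' < T$ and a smooth spatial cutoff $\chi_R(x)$ equal to $1$ on $[-R,R]$, supported in $[-2R,2R]$, with $|\chi_R'|\le C/R$, $|\chi_R''|\le C/R^2$. Multiply the equation for $w$ by $\chi_R^2 w$ and integrate over $\rr\times\Theta$. The term $\int \theta w_{xx}\chi_R^2 w$, after integration by parts in $x$, produces $-\int \theta \chi_R^2 w_x^2$ plus cross terms involving $\chi_R\chi_R' w w_x$ controlled by $\frac{1}{2}\int\theta\chi_R^2 w_x^2 + C\int \theta (\chi_R')^2 w^2$; the term $\int \alpha w_{\theta\theta}\chi_R^2 w$, after integration by parts in $\theta$, gives $-\alpha\int \chi_R^2 w_\theta^2$ with \emph{no} boundary contribution precisely because of the Neumann conditions $w_\theta(x,\theta_m,t)=w_\theta(x,\theta_M,t)=0$; and the zeroth-order term contributes $\int (1-\rho)\chi_R^2 w^2$, which is bounded by $\|1-\rho\|_{L^\infty}\int\chi_R^2 w^2$ on the relevant compact set once we know $\rho$ is (locally) bounded. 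Collecting, with $E_R(t) = \int_{\rr\times\Theta}\chi_R^2 w^2\,dx\,d\theta$, one obtains
\[
\frac{d}{dt} E_R(t) \le C_1 E_R(t) + \frac{C_2}{R^2}\int_{R\le|x|\le 2R}\int_\Theta w^2\,dx\,d\theta .
\]

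Then I would close the argument by Gronwall together with a growth control in $x$. Since $E_R(0)=0$, Gronwall gives $E_R(t)\le \frac{C_2 e^{C_1 T}}{R^2}\int_0^t\int_{R\le|x|\le 2R}\int_\Theta w^2$. Under the hypothesis that solutions are considered in a class with at most, say, polynomial (or any subexponential) growth in $x$ — which is the natural class here, and in any case the solutions produced by Proposition~\ref{classical} after the change of variables lie in $C^{2+\eta}$ with controlled norms — the right-hand side tends to $0$ as $R\to\infty$ for each fixed $t<T$, whence $\int_{[-R,R]\times\Theta} w^2 = 0$ for every $R$, i.e. $w\equiv 0$ on $(0,T)$. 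If instead one wants uniqueness without any growth restriction one can invoke a Phragmén–Lindelöf / Täcklind-type uniqueness theorem for parabolic equations on $\rr\times\Theta$ after freezing $\theta$, but the cutoff energy method is cleanest and self-contained.

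The main obstacle is a bookkeeping one rather than a conceptual one: one must be careful that $\rho$ is only assumed continuous and non-negative, not bounded, so the coefficient $(1-\rho)$ in the zeroth-order term need not be globally bounded; this is handled by working on compact time intervals and the fact that $\chi_R$ localizes the $x$-integration to $|x|\le 2R$, where $\rho$ (being continuous) is bounded, so $C_1 = C_1(R)$ may depend on $R$ — and then the Gronwall constant $e^{C_1(R)T}$ must be beaten by the $R^{-2}$ factor times the growth of $w$. If $\rho$ is allowed to grow this can fail, so the cleanest fix is to additionally assume (as is the case in every application in the paper, since $\rho^\ep = \int n^\ep\,d\theta$ is bounded by Corollary~\ref{supbd:cor}) that $\rho$ is locally bounded uniformly in $t$, which makes $C_1$ independent of $R$ and lets the argument close immediately. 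I would state the lemma's proof under that mild additional understanding, or simply note that in the intended application $\rho$ is globally bounded.
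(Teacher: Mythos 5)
The paper does not actually give a proof of this lemma: it observes that $\rho$ is a given function so the equation is local and linear in the difference of two solutions, and then declares the result ``standard'' and omits the argument. Your proposal fills in exactly the standard argument the paper has in mind (difference of solutions, cutoff energy estimate, Gronwall), and it is essentially correct for the class of solutions that actually arises in the paper, namely bounded ones. Two remarks. First, your worry about $(1-\rho)$ being unbounded is unnecessary: the energy inequality only requires a one-sided bound $\int \chi_R^2(1-\rho)w^2 \le C\int\chi_R^2 w^2$, and since $\rho\ge 0$ we have $1-\rho\le 1$, so $C=1$ works with no local-boundedness assumption on $\rho$ and no $R$-dependence in $C_1$. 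Second, your claim that the plain cutoff argument closes for ``polynomial (or any subexponential) growth'' is too strong: with $|\chi_R'|\le C/R$ the Gronwall bound gives $E_R(t)\le CR^{-2}e^{Ct}\int_0^t\int_{R\le|x|\le 2R}\int_\Theta w^2$, which tends to zero only when $\int_{R\le|x|\le 2R}\int_\Theta w^2=o(R^2)$ --- true for bounded $w$ (the integral is $O(R)$) but false already for $|w|\sim|x|^{k}$ with $k\ge 1/2$. For genuinely subexponential classes one needs the weighted/T\"acklind-type argument you mention only as an aside. Since the solutions to which the lemma is applied (namely $n$ and the $\tilde u$ produced by Proposition \ref{classical}) are globally bounded on $\rr\times\Theta\times[0,T]$, your argument suffices for the paper's purposes; just restrict the stated growth class accordingly.
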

We point out that here $\rho$ is a \emph{given} function and the equation (\ref{eqn:ulem}) for $u$ is \emph{local}. Because of this, the proof of the lemma is standard and we omit it. 

We are now ready to state and prove the H\"older estimate for $n$:
\begin{prop}
\label{deriv bd in terms of M}
Assume (A\ref{asump C2}), (A\ref{asm n0}), and (A\ref{asm Theta}).   
We define  the extension $\bn:\rr\times\rr\times [0,\infty)\rightarrow \rr$ by
\[
\bn(x,\theta,t)=n(x,a(\theta),t).
\]
\begin{enumerate}
\item \label{item:bneqn}
We have that $\bn$ is twice differentiable on $\rr\times \rr\times (0,\infty)$ and satisfies
\begin{equation}
\label{eqnbn}
\begin{cases}
\bn_t = a(\theta) \bn_{xx}+  \alpha \bn_{\theta \theta} +r \bn(1-\rho) \text{ on } \rr\times \rr\times (0,\infty),\\
\bn(x,\theta,0)=\bn_0(x,\theta) \text{ for all }(x,\theta)\in \rr\times \rr.
\end{cases}
\end{equation}
\item \label{item:bdbn} Let $T>0$ and assume $\sup_{\rr\times\Theta\times [0,T]} n = M\geq 1$. There exists a positive constant $\tilde{C}$ that depends on $\eta$, $ \alpha$, $r$ $\theta_m$ and $\theta_M$ so that
\[
\norm{\bn}{2+\eta, \rr\times\rr\times(0,T]} \leq\tilde{ C} (M^{2+\eta/2}+\norm{n_0(x,a(\theta))}{2+\eta, \rr\times\rr}).
\]
\end{enumerate}
\end{prop}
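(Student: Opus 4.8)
The plan is to prove the two parts separately, using Proposition \ref{classical} (parabolic Schauder theory) as the main engine for part \eqref{item:bdbn} and a more elementary argument for part \eqref{item:bneqn}.

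For part \eqref{item:bneqn}, the only nontrivial point is $C^2$-regularity of $\bn$ across the ``reflection points'' $\theta = \theta_m + k(\theta_M - \theta_m)$, $k \in \zz$, since away from these points $\bn$ is locally just a reflected/translated copy of $n$ and therefore inherits $C^2$-regularity and the PDE (note $a(\theta) = \theta$ near $\Theta$ and $a'(\theta) = \pm 1$, $a''(\theta) = 0$ away from the breakpoints, so the chain rule produces exactly $\bn_t = a(\theta)\bn_{xx} + \alpha \bn_{\theta\theta} + r\bn(1-\rho)$). Across a breakpoint, first-derivative continuity is immediate because $n$ satisfies the Neumann condition $\partial_\theta n(x,\theta_m,t) = \partial_\theta n(x,\theta_M,t) = 0$, so the one-sided $\theta$-derivatives of $\bn$ both vanish there; continuity of $\bn_{xx}$ and $\bn_t$ across the breakpoint is also immediate since these involve no $\theta$-derivative. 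The subtle derivative is $\bn_{\theta\theta}$: its one-sided limits at the breakpoint are equal because the PDE for $n$ evaluated at $\theta = \theta_M$ (resp.\ $\theta_m$) forces $\alpha \partial^2_{\theta\theta} n(x,\theta_M,t) = \partial_t n - \theta_M \partial^2_{xx} n - rn(1-\rho)$, and this quantity is the same whether approached from the left or right branch (both branches reflect $n$ near $\theta_M$). Hence $\bn$ is genuinely $C^2$ and satisfies \eqref{eqnbn}. Since $n_0$ satisfies the same Neumann condition by (A\ref{asm n0}), $\bn(x,\theta,0) = n_0(x,a(\theta)) = \bn_0(x,\theta)$, and I will have recorded in (A\ref{asm n0}) that this extension lies in $C^{2,\eta}$.

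For part \eqref{item:bdbn}, I will view \eqref{eqnbn} as a \emph{linear} parabolic equation for $\bn$ with the reaction term $r\bn(1-\rho)$ treated as a forcing. The strategy is: first note $\rho(x,t) = \int_\Theta n(x,\theta,t)\,d\theta \leq (\theta_M - \theta_m)M$, so $|r\bn(1-\rho)| \leq rM(1 + (\theta_M-\theta_m)M) \leq C M^2$ on $\rr\times\rr\times[0,T]$ (using $M \geq 1$), and similarly $\linfty{\bn}{} \leq M$. I would like to apply Proposition \ref{classical} with $a^{ij} = \mathrm{diag}(a(\theta), \alpha)$, which is uniformly elliptic with ellipticity constant depending only on $\theta_m, \theta_M, \alpha$, and with $\norm{a^{ij}}{\eta,\rr^2}$ bounded by the same constants since $a$ is globally Lipschitz with values in $[\theta_m,\theta_M]$. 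But the forcing $f = r\bn(1-\rho)$ is only known to be $L^\infty$ a priori, not $C^\eta$, so a direct application needs the $C^\eta$-norm of $f$. The fix is a bootstrap: apply interior parabolic estimates (or De Giorgi--Nash--Moser / the $L^p$ theory followed by Sobolev embedding, as packaged in \cite{Ladyzenskaja}) to the linear equation to first obtain a $C^{\eta}$ bound $\norm{\bn}{\eta, \rr\times\rr\times[0,T]} \leq C(M^2 + \norm{n_0}{\eta})$ from the $L^\infty$ bounds on $\bn$ and $f$ — here one must be slightly careful to get the dependence on $M$ right, and the exponent $M^{2+\eta/2}$ in the claim suggests that after this step one bounds $\semi{f}{\eta} \leq \semi{r\bn(1-\rho)}{\eta} \leq C M \cdot \semi{\bn}{\eta} + C M \cdot \semi{\rho}{\eta}$, and one controls $\semi{\rho}{\eta}$ by $C M \semi{\bn}{\eta}$ (integrating the Hölder seminorm of $\bn$ in $\theta$), yielding $\norm{f}{\eta} \leq C M^{2}\cdot M^{\eta/2}$-type growth after inserting the $C^\eta$-bound. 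Then Proposition \ref{classical} applied with this $C^\eta$-forcing gives
\[
\norm{\bn}{2+\eta,\rr\times\rr\times[0,T]} \leq C\bigl(\norm{f}{\eta} + \norm{\bn}{\eta} + \norm{n_0(x,a(\theta))}{2+\eta,\rr\times\rr}\bigr) \leq \tilde C\bigl(M^{2+\eta/2} + \norm{n_0(x,a(\theta))}{2+\eta,\rr\times\rr}\bigr).
\]

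The main obstacle is bookkeeping the precise power of $M$. The structure of the bound — $L^\infty$ control of order $M$ and $M^2$, upgraded through one $C^\eta$ step and one $C^{2+\eta}$ step, with the Hölder seminorm of $\rho$ costing an extra factor of $M$ — has to be tracked carefully to land exactly on $M^{2+\eta/2}$ rather than some cruder polynomial. A secondary technical point is that Proposition \ref{classical} is stated on all of $\rr^2 \times (0,T)$ with $C^\eta$ data, whereas the natural estimate for $f$ with the correct $M$-dependence may only come after an interior/global bootstrap; I will need to invoke the global parabolic Schauder and $L^p$ theory from \cite{Ladyzenskaja,Krylovbook} in the right order, and use Lemma \ref{lem:unique} to identify $\bn$ restricted to $\rr\times\Theta\times[0,T]$ with $n$ so that the bounds genuinely apply to the object of interest. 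Apart from this, the argument is routine linear parabolic regularity.
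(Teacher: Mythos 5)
Your treatment of part (\ref{item:bneqn}) is a legitimate, more hands-on alternative to the paper's argument: the paper instead solves the linear problem on all of $\rr^2\times(0,T)$ via Proposition \ref{classical}, observes that the data and forcing are invariant under $\theta\mapsto 2k(\theta_M-\theta_m)+\theta$ so the unique solution $\tilde u$ inherits this symmetry (hence the Neumann condition), and then identifies $\tilde u$ with $n$ on $\rr\times\Theta$ via Lemma \ref{lem:unique}, which yields $\bn\equiv\tilde u\in C^{2+\eta}$ in one stroke. Your direct matching of one-sided derivatives at the reflection points works, but note it tacitly requires $n_{\theta\theta}$ to extend continuously to $\bar\Theta$; the uniqueness route avoids having to assume this.

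For part (\ref{item:bdbn}) there is a genuine gap: you never produce the exponent $2+\eta/2$, and the route you sketch cannot. The paper's mechanism is an \emph{interpolation-and-absorption} argument, not a De Giorgi/$L^p$ bootstrap. From the Schauder estimate one gets $\norm{\bn}{2+\eta}\leq C(M\semi{\bn}{\eta}+M^2+\norm{n_0(x,a(\theta))}{2+\eta})$ (the seminorm of $\rho$ is controlled by $(\theta_M-\theta_m)\semi{n}{\eta}$, with no extra factor of $M$, contrary to your accounting). One then invokes the interpolation inequality $\semi{\bn}{\eta}\leq C(\ep\norm{\bn}{2+\eta}+\ep^{-\eta/2}M)$ and chooses $\ep\sim 1/M$, so that the term $CM\ep\norm{\bn}{2+\eta}$ is absorbed into the left-hand side and the remainder is $CM^2\ep^{-\eta/2}=CM^{2+\eta/2}$. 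Your proposed alternative — first bound $\semi{\bn}{\eta}$ by interior regularity from the $L^\infty$ data — gives at best $\semi{\bn}{\eta}\leq C(\linfty{\bn}{}+\linfty{f}{})\leq CM^2$ (and in a Hölder exponent that need not be $\eta$), whence $\norm{f}{\eta}\lesssim M^3$ and $\norm{\bn}{2+\eta}\lesssim M^3$. This is not a cosmetic loss: as Remark \ref{remark:choice of nonlinearity} explains, the proof of Theorem \ref{prop:supbd} closes only if the exponent $p$ in $\norm{\bn}{2+\eta}\lesssim M^p$ satisfies $p<2+\eta$, so $p=3$ (or anything $\geq 2+\eta$) makes the downstream argument fail. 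The missing idea is precisely the absorption of the top-order norm via interpolation with an $M$-dependent $\ep$.
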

\begin{proof}[Proof of Proposition \ref{deriv bd in terms of M}]
We apply Proposition \ref{classical}  with  right-hand side $f=\bn(1-\rho)$,  initial condition $u_0=\bn_0$, and the matrix of diffusion coefficients being the diagonal matrix with entries $a(\theta)$ and $\alpha$. The assumption (A\ref{asm n0}) on $n_0$ says exactly that the assumption of Proposition \ref{classical} on the initial condition is satisfied. By  Proposition \ref{classical}, there exists a unique solution $\tilde{u}$ of 
\begin{equation*}
\begin{cases}
\tu_t- a(\theta) \tu_{xx}-\alpha \tu_{\theta\theta} = r \bar{n}(1-\rho) \text{ on } \rr^2\times (0,T)\\
\tu=\bn_0 \text{ on } \rr^2 \times \{t=0\}
\end{cases}
\end{equation*}
and we have the estimate
\begin{equation}
\label{estimateclass}
\norm{\tu}{2+\eta, \rr^2\times [0,T]}\leq C(\norm{r \bar{n}(1-\rho)}{\eta, \rr^2\times [0,T]} +\norm{\tu}{\eta, \rr^2\times [0,T]}+\norm{\tu_0}{2+\eta, \rr^2}),
\end{equation}
where $C$ depends on $\theta_m$, $\theta_M$ and $\alpha$.
Let us remark that the maps $\theta\mapsto r \bar{n}(x,\theta,t)(1-\rho(x,t))$ and $\theta\mapsto\bn_0(x,\theta,t)$ satisfy
\[
v(2k(\theta_M-\theta_m)+\theta)=v(\theta) \text{ for all } \theta\in \rr, k\in \zz.
\]
Together with the fact that the solution $\tu$ is  \emph{unique}, this implies $\tu$ has this symmetry as well. Thus, $\tu_\theta(x,\theta_m, t)=\tu_\theta(x,\theta_M, t)=0$ for all $x$ and for all $t>0$. Therefore, $\tu(x,\theta,t)$ with $\theta\in \Theta$ satisfies
\begin{equation*}
\begin{cases}
\tu_t=\theta \tu_{xx}+ \alpha \tu_{\theta\theta} +r \tu(1-\rho) \text{ on }\rr\times\Theta\times (0,T),\\
 \tu_\theta(x,\theta_m,t)=\tu_\theta(x,\theta_M,t)=0 \text{ for all } x\in \rr, t\in (0,T),\\
\tu(x,\theta,0)=\bn_0(x,\theta) \text{ on } \rr\times \Theta.
\end{cases}
\end{equation*}
Since $n$ also satisfies this equation, Lemma \ref{lem:unique} implies $\tu(x,\theta,t)=n(x,\theta,t)$ for all $x\in \rr$, all $\theta\in \Theta$, and all $t\in (0,T]$. Therefore, $\bn \equiv \tu$. In particular, we have that item (\ref{item:bneqn}) of the proposition holds. In addition, the estimate (\ref{estimateclass}) holds for $\bn$ and reads,
\begin{equation}
\label{firstneta}
\norm{\bn}{2+\eta, \rr\times\rr\times [0,T]} \leq C(
\norm{\bn(1-\rho)}{\eta, \rr\times\rr\times [0,T]}+\norm{\bn}{\eta,\rr\times\rr\times[0,T]}+\norm{n_0(x,a(\theta))}{2+\eta, \rr\times\rr}),
\end{equation}
where the constant $C$ depends on $\eta$, $\alpha$, $r$, $\theta_m$ and $\theta_M$. We will now establish item (\ref{item:bdbn}) of the proposition. Let us recall that we assume 
\[
\sup_{\rr\times\Theta\times [0,T]} n =M
\]
and $M\geq 1$.  We have $\sup_{\rr\times\Theta\times [0,T]} n = \sup_{\rr\times\rr\times [0,T]} \bn$. For the remainder of the proof of this proposition, we drop writing the domains in the semi-norms and norms (it is always $\rr\times\rr\times [0,T]$).
We will now bound the right-hand side of (\ref{firstneta}) from above. To this end, we first recall the definition of $|\cdot|$:
\begin{align}
\label{usedefnorm}
\norm{\bn(1-\rho)}{\eta} &=\semi{\bn(1-\rho)}{\eta}+||\bn(1-\rho)||_{\infty}.
\end{align}
For functions $f$ and  $g$ we have the elementary estimate
\[
[fg]_{\eta}\leq ||f||_{\infty}[g]_{\eta}+||g||_{\infty}[f]_{\eta}.
\]
We apply this with $f=\bar{n}$ and $g=(1-\rho)$ to obtain a bound from above on the first term of the right-hand side of (\ref{usedefnorm}):
\begin{align*}
\norm{\bn(1-\rho)}{\eta}
&\leq \semi{\bn}{\eta}||(1-\rho)||_{\infty}+\semi{1-\rho}{\eta}||\bn||_{\infty}+ ||\bn(1-\rho)||_{\infty}.
\end{align*}
In addition, the definition of $\rho$ implies $\semi{(1-\rho)}{\eta }\leq (\theta_M-\theta_m) \semi{n}{\eta}$ and $||(1-\rho)||_{\infty}\leq  (\theta_M-\theta_m) M$. We use this to estimate the right-hand side of the previous line and find 
\begin{align}
\label{estsemifirstterm}
\norm{\bn(1-\rho)}{\eta} 
&\leq 2 (\theta_M-\theta_m) M\semi{\bn}{\eta}+M+M^2.
\end{align}
Similarly we estimate the second term on the right-hand side of (\ref{firstneta}): 
\[
\norm{\bn}{\eta} = \semi{\bn}{\eta}+||\bn||_{\infty}\leq \semi{\bn}{\eta}+M.
\]
We use (\ref{estsemifirstterm}) and the previous line to bound from above the first and second terms, respectively, on the right-hand side of (\ref{firstneta}) and obtain, 
\[
\norm{\bn}{2+\eta} \leq C(
(2 (\theta_M-\theta_m) M\semi{\bn}{\eta}+M+M^2)+\semi{\bn}{\eta} +M+\norm{n_0(x,a(\theta))}{2+\eta}) .
\]
We now use  $M\geq 1$ and obtain,
\begin{equation}
\label{n2eta}
\norm{\bn}{2+\eta} \leq
 C(M\semi{\bn}{\eta} +M^2+\norm{n_0(x,a(\theta))}{2+\eta}).
\end{equation}
By interpolation estimates  for the seminorms $[\cdot]$ (for example, \cite[Theorem 8.8.1]{Krylovbook}), we have, for any $\ep>0$,
\begin{equation}
\label{semin}
\semi{\bn}{\eta}\leq C(\ep\norm{\bn}{2+\eta} + \ep^{-\eta/2}M).
\end{equation}
We use the estimate (\ref{semin}) in the right-hand side of the bound (\ref{n2eta}) for $\norm{\bn}{2+\eta}$ and obtain 
\begin{align*}
\norm{\bn}{2+\eta} &\leq C(M(\ep\norm{\bn}{2+\eta} + \ep^{-\eta/2}M) +M^2+\norm{n_0(x,a(\theta))}{2+\eta}) 
\\&= C_1M\ep\norm{\bn}{2+\eta}  + CM^2 \ep^{-\eta/2}+CM^2+C\norm{n_0(x,a(\theta))}{2+\eta}.
\end{align*}
Choosing $\ep = \frac{1}{2C_1M}$, we obtain $C_1M \ep =\frac{1}{2} $ and $M^2 \ep^{-\eta/2}= M^{2+\eta/2}$. Rearranging the above we thus obtain,
\[
 \norm{\bn}{2+\eta} \leq CM^{2+\eta/2} +CM^2 +C\norm{n_0(x,a(\theta))}{2+\eta} \leq CM^{2+\eta/2}+C\norm{n_0(x,a(\theta))}{2+\eta},
\]
where the last inequality follows since $M\geq 1$.
\end{proof}

\subsection{How an $L^1$ bound on $n$ yields an $L^\infty$ bound}
\label{subsec:L1Linf}

We will employ certain $L^\infty$ estimates on the heat kernel, which we state in the following lemma.
\begin{lem}
\label{lem:kernel}
There exists a kernel $K$ such that if $\tilde{u}_0$ is non-negative and $\tilde{u}$ is a  weak solution of
\begin{equation*}
\begin{cases}
\tu_t= a(\theta) \tu_{xx}+\alpha \tu_{\theta\theta} \text{ on }\rr\times \rr\times (0,\infty),\\
\tu(x,\theta,0)=\tu_0(x,\theta) \text{ on } \rr\times \rr,
\end{cases}
\end{equation*}
then 
\[
\tu(x,\theta,t)=\int_{-\infty}^{\infty}\int_{-\infty}^{\infty} K(t,x,y,\theta,\zeta)\tu_0(y,\zeta)\,d\zeta\,dy.
\]
Moreover, there exist  constants $c_1$ and $c_2$ that  depend only on $\alpha$, $\theta_m$ and $\theta_M$ such that
\[
c_1t^{-1}e^{\frac{-c_1((x-y)^2+(\theta-\zeta)^2)}{t}}\leq K(t,x,y,\theta,\zeta)\leq 
c_2t^{-1}e^{\frac{-c_2((x-y)^2+(\theta-\zeta)^2)}{t}}
\]
for all $x$, $y$, $\theta$,$\zeta$ in $\rr$, and for all $t>0$.
\end{lem}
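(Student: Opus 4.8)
The plan is to recognize Lemma \ref{lem:kernel} as a special case of the parabolic Gaussian-bounds theory of Aronson \cite{Aronson}, and to reduce our equation to the form covered there. First I would note that the operator $\partial_t - a(\theta)\partial^2_{xx} - \alpha\partial^2_{\theta\theta}$ on $\rr^2$ has the divergence form $\partial_t - \sum_{i,j}\partial_i(A_{ij}(\theta)\partial_j \cdot)$, where $A$ is the diagonal matrix $\mathrm{diag}(a(\theta),\alpha)$; this works precisely because the diffusion coefficients depend only on $\theta$ (which plays the role of $x_2$) and not on $x$ or $t$, so $\partial_1(a(\theta)\partial_1 \tu) = a(\theta)\partial^2_{11}\tu$ and $\partial_2(\alpha\partial_2 \tu) = \alpha\partial^2_{22}\tu$. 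The matrix $A$ is measurable, symmetric, and uniformly elliptic: since $a(\theta)$ takes values in $[\theta_m,\theta_M]$ with $\theta_m>0$ by (A\ref{asm Theta}), we have $\min\{\theta_m,\alpha\}\,|\xi|^2 \leq \xi^T A(\theta)\xi \leq \max\{\theta_M,\alpha\}\,|\xi|^2$ for all $\xi\in\rr^2$. No continuity of the coefficients is needed.

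Next I would invoke Aronson's main theorem: for a uniformly parabolic divergence-form operator with bounded measurable coefficients on $\rr^d\times(0,\infty)$, there is a unique fundamental solution $K(t,x,y,\theta,\zeta)$ (the kernel of the solution semigroup), and it satisfies two-sided Gaussian bounds
\[
\frac{c_1}{t^{d/2}}\,e^{-c_1|(x,\theta)-(y,\zeta)|^2/t} \;\leq\; K(t,x,y,\theta,\zeta) \;\leq\; \frac{c_2}{t^{d/2}}\,e^{-c_2|(x,\theta)-(y,\zeta)|^2/t},
\]
where $c_1, c_2$ depend only on the dimension and the ellipticity constants. Here $d=2$, so $t^{d/2}=t$, matching the statement; and the ellipticity constants are $\min\{\theta_m,\alpha\}$ and $\max\{\theta_M,\alpha\}$, so $c_1,c_2$ depend only on $\alpha$, $\theta_m$, $\theta_M$ as claimed. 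The representation formula $\tu(x,\theta,t)=\iint K(t,x,y,\theta,\zeta)\tu_0(y,\zeta)\,d\zeta\,dy$ then follows from the standard theory of the associated analytic semigroup acting on $\tu_0$ (e.g.\ $\tu_0 \in L^\infty(\rr^2)$, which is the only case we need to apply this to), together with uniqueness of weak solutions.

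The only genuinely delicate point is bookkeeping about function classes: one must make sure the extension map $\theta\mapsto a(\theta)$ produces a coefficient $a(\theta)$ that is merely bounded and measurable (indeed it is Lipschitz, hence certainly admissible), and one must match our notion of "weak solution" with the class in which Aronson's uniqueness and representation hold — for the uses in this paper $\tu_0$ will be continuous and bounded (it is a product of the bounded function $\bn$ with bounded data, or similar), so this is routine. I would therefore present the proof as a short citation-driven argument: verify divergence form, verify uniform ellipticity with the stated constants, quote \cite{Aronson} for existence of $K$, the representation formula, and the two-sided bound, and observe that the constants have the claimed dependence. I expect no real obstacle; the "hard part," such as it is, is simply stating the hypotheses of Aronson's theorem precisely enough that the reader sees they are met, in particular that $d=2$ gives the $t^{-1}$ prefactor.
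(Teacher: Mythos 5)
Your proposal is correct and follows essentially the same route as the paper: both reduce the lemma to a direct citation of Aronson's theorems (representation of the solution via the fundamental solution, plus the two-sided Gaussian bounds), after checking that the coefficient matrix $\mathrm{diag}(a(\theta),\alpha)$ is bounded and uniformly elliptic with constants depending only on $\theta_m$, $\theta_M$, $\alpha$. Your extra remark that the operator is simultaneously in divergence and non-divergence form (because the diagonal coefficients depend only on $\theta$) is a slightly more careful matching of hypotheses than the paper bothers with, but it is the same argument.
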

\begin{proof}
We apply two theorems of Aronson \cite{Aronson}. That we have $\tu=K* \tu_0$ is the content of \cite[Theorem 11]{Aronson}. The bound on $K$ is exactly \cite[item (ii), Theorem 10]{Aronson}.  
Aronson's results apply to any parabolic equation $u_t - \tr(A(x,\theta) D^2u)=G$ in $\rr^n\times (0,\infty)$ with bounded coefficients and with $G$ regular enough. In our case, $n=2$ and $G\equiv 0$ (so, in particular, it satisfies Aronson's hypotheses) and $A(x,\theta)$ is the diagonal matrix with entries $a(\theta)$ and $\alpha$. 
In the general case, the constants $c_1$ and $c_2$  depend on the ellipticity constant of $A$ and the $L^\infty$ norm of $A$. Since these depend only on $\alpha$, $\theta_m$ and $\theta_M$, we have that $c_1$ and $c_2$ depend only on $\alpha$, $\theta_m$ and $\theta_M$ as well.
\end{proof}
For the proof of Proposition \ref{prop:bdfromL1}  we also need the following lemma. Its proof is elementary and we omit it.
\begin{lem}
\label{lem:elem}
For $a> 0$ we have $\sum_{i=1}^{\infty} e^{-a^2i^2}\leq\frac{\sqrt{\pi}}{2a}$.
\end{lem}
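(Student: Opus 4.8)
The plan is to dominate the series by an integral and then evaluate the resulting Gaussian integral. The key observation is that for fixed $a>0$ the function $x\mapsto e^{-a^2x^2}$ is positive and strictly decreasing on $[0,\infty)$. Hence, for every integer $i\geq 1$ and every $x\in[i-1,i]$ one has $x^2\leq i^2$, so $e^{-a^2x^2}\geq e^{-a^2i^2}$, and integrating over $[i-1,i]$ gives the elementary inequality
\[
e^{-a^2i^2}\leq \int_{i-1}^{i} e^{-a^2x^2}\,dx .
\]

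Summing this over all $i\geq 1$, and using that the intervals $[i-1,i]$ tile $[0,\infty)$ (all terms are nonnegative, so there is no issue with rearrangement, and both sides converge since $e^{-a^2x^2}$ decays superexponentially), I would obtain
\[
\sum_{i=1}^{\infty} e^{-a^2i^2}\leq \int_{0}^{\infty} e^{-a^2x^2}\,dx .
\]
The substitution $u=ax$, which is legitimate because $a>0$, turns the right-hand side into $\frac{1}{a}\int_0^{\infty}e^{-u^2}\,du$. Invoking the classical value $\int_0^{\infty}e^{-u^2}\,du=\frac{\sqrt{\pi}}{2}$ of the Gaussian integral then yields $\sum_{i=1}^{\infty}e^{-a^2i^2}\leq \frac{\sqrt{\pi}}{2a}$, as claimed.

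There is no genuine obstacle here — the estimate is purely elementary, which is presumably why the authors omit its proof. The only step deserving a word of comment is the integral comparison, which is just the standard fact that the integer samples of a nonnegative decreasing function are bounded above by its integral over $[0,\infty)$; everything else is a change of variables and the known value of the Gaussian integral.
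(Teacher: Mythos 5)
Your proof is correct: the integral comparison $e^{-a^2i^2}\leq \int_{i-1}^{i}e^{-a^2x^2}\,dx$ for the decreasing function $x\mapsto e^{-a^2x^2}$, followed by the substitution $u=ax$ and the Gaussian integral $\int_0^\infty e^{-u^2}\,du=\frac{\sqrt{\pi}}{2}$, gives exactly the stated bound. The paper omits the proof precisely because it is this standard elementary argument, so there is nothing to add.
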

%\begin{proof}
%We have a bound on the series we're interested in by the corresponding integral of $e^{-a^2x^2}$:
%\[
%\sum_{i=1}^{\infty} e^{-a^2i^2}\leq \int_0^{\infty} e^{-a^2x^2}\,dx .
%\]
%By change of variables, we obtain
%\[
%\int_0^{\infty} e^{-a^2x^2}\,dx = \frac{1}{a}\int_0^{\infty} e^{-x^2}\,dx = \frac{1}{a}\frac{\sqrt{\pi}}{2}.
%\]
%\end{proof}
We now state and prove the main proposition of this subsection.
\begin{prop}
\label{prop:bdfromL1}
Suppose $\tu$ is a non-negative classical solution of 
\begin{equation*}
\begin{cases}
\tu_t= a(\theta) \tu_{xx}+ \alpha \tu_{\theta\theta} \text{ on }\rr\times \rr\times (0,\infty),\\
\tu(x,\theta,0)=\tu_0(x,\theta) \text{ on } \rr\times \rr,
\end{cases}
\end{equation*}
where $\tu_0(x,\theta)\in C(\rr\times\rr)$ satisfies, for some $0<\sigma\leq 1$ and for all  $x\in \rr$ and $\zeta\in \rr$,
\begin{equation}
\label{asm bu0}
\int_{\zeta-\sigma/2}^{\zeta+\sigma/2} \tu_0(x,\theta)\,d\theta\leq C_1.
\end{equation}
There exists a constant $C_0$ that depends only on $ \alpha$, $\theta_m$ and $\theta_M$ such that, for all $x$ and $\theta$,
\[
\tu(x,\theta,1)\leq \frac{C_0 C_1}{\sigma}.
\]
\end{prop}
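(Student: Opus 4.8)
The plan is to use the kernel representation from Lemma~\ref{lem:kernel} together with the Gaussian upper bound on $K$, and then exploit the hypothesis \eqref{asm bu0} by chopping the $\zeta$-integration into slices of width $\sigma$. First I would write
\[
\tu(x,\theta,1)=\int_{-\infty}^{\infty}\int_{-\infty}^{\infty} K(1,x,y,\theta,\zeta)\,\tu_0(y,\zeta)\,d\zeta\,dy
\leq c_2\int_{-\infty}^{\infty}\int_{-\infty}^{\infty} e^{-c_2((x-y)^2+(\theta-\zeta)^2)}\,\tu_0(y,\zeta)\,d\zeta\,dy.
\]
The point is that $\tu_0\geq 0$ and that the weight in the $y$-variable is integrable with a bound depending only on $c_2$ (hence only on $\alpha,\theta_m,\theta_M$), while in the $\zeta$-variable I need to pair the Gaussian decay against the $L^1$-on-$\sigma$-intervals control \eqref{asm bu0}.

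Next I would partition $\rr$ in the $\zeta$ variable into the intervals $I_k^\zeta = [\,(k-1/2)\sigma,\,(k+1/2)\sigma\,)$ for $k\in\zz$, and similarly handle $y$ either by a crude $\int e^{-c_2(x-y)^2}\,dy = \sqrt{\pi/c_2}$ bound after noting $\tu_0$ only enters through its $\zeta$-integral on each slice — more precisely, I would first do the $y$-integral for fixed $\zeta$ is not directly possible since $\tu_0$ depends on $y$; instead, bound $e^{-c_2((x-y)^2+(\theta-\zeta)^2)}\leq e^{-c_2(\theta-\zeta)^2}$ and integrate in $y$ and $\zeta$ over each slice. On the slice $I_k^\zeta$, $\sup_{\zeta\in I_k^\zeta}e^{-c_2(\theta-\zeta)^2}\leq e^{-c_2(\dist(\theta,I_k^\zeta))^2}$, and $\dist(\theta,I_k^\zeta)\geq (|k-k_0|-1)\sigma$ where $k_0$ is the index of the slice containing $\theta$. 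Thus
\[
\tu(x,\theta,1)\leq c_2\sum_{k\in\zz}\Big(\sup_{\zeta\in I_k^\zeta}e^{-c_2(\theta-\zeta)^2}\Big)\int_{-\infty}^{\infty}\int_{I_k^\zeta}\tu_0(y,\zeta)\,d\zeta\,dy
\leq c_2 C_1 \sum_{k\in\zz} e^{-c_2(\dist(\theta,I_k^\zeta))^2}\cdot(\text{something}).
\]
Here I must be careful: \eqref{asm bu0} controls $\int_{\zeta-\sigma/2}^{\zeta+\sigma/2}\tu_0(x,\theta)\,d\theta$ for \emph{each fixed} $x$, i.e.\ it bounds the $\theta$-integral over a $\sigma$-window, not a $y$-integral. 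So the correct move is: for fixed $y$, $\int_{I_k^\zeta}\tu_0(y,\zeta)\,d\zeta\leq C_1$ by \eqref{asm bu0} (taking the window centered appropriately, using nonnegativity to enlarge if needed), and then the remaining $\int_{-\infty}^\infty e^{-c_2(x-y)^2}\,dy=\sqrt{\pi/c_2}$ handles the $y$-direction. Carrying the $e^{-c_2(\theta-\zeta)^2}$ factor out of the $\zeta$-integral as a supremum over the slice gives
\[
\tu(x,\theta,1)\leq c_2\sqrt{\tfrac{\pi}{c_2}}\,C_1\sum_{k\in\zz}e^{-c_2\sigma^2(|k-k_0|-1)_+^2}.
\]

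Finally I would bound the sum: $\sum_{k\in\zz}e^{-c_2\sigma^2(|k-k_0|-1)_+^2}\leq 3 + 2\sum_{j\geq 1}e^{-c_2\sigma^2 j^2}$, and apply Lemma~\ref{lem:elem} with $a=\sqrt{c_2}\,\sigma$ to get $\sum_{j\geq1}e^{-c_2\sigma^2j^2}\leq \frac{\sqrt\pi}{2\sqrt{c_2}\,\sigma}$. Since $\sigma\leq 1$, the constant term $3$ is absorbed into $\tfrac{1}{\sigma}$ times a constant, so altogether $\sum_k(\cdots)\leq \frac{C}{\sigma}$ for a constant depending only on $c_2$. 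This yields $\tu(x,\theta,1)\leq \frac{C_0 C_1}{\sigma}$ with $C_0$ depending only on $\alpha,\theta_m,\theta_M$, as claimed. The main obstacle, and the step requiring the most care, is the bookkeeping in the slicing argument — making sure that \eqref{asm bu0} (a bound on $\sigma$-window integrals in $\theta$ for each fixed spatial point) is applied in the right variable, that nonnegativity of $\tu_0$ is used to relate an integral over $I_k^\zeta$ to a centered $\sigma$-window, and that the Gaussian factor is correctly pulled out as a slice-supremum with the right distance estimate; the convergence of the resulting geometric-type sum and the $\sigma\leq 1$ normalization are then routine via Lemma~\ref{lem:elem}.
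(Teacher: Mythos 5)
Your proposal is correct and follows essentially the same route as the paper: Gaussian upper bound on the kernel from Lemma \ref{lem:kernel}, slicing the $\zeta$-integral into width-$\sigma$ intervals on which the hypothesis (\ref{asm bu0}) applies, pulling the Gaussian out as a slice-supremum, and summing via Lemma \ref{lem:elem} together with $\sigma\leq 1$. The only differences (centered slices indexed relative to $\theta$'s slice versus the paper's shift of variable followed by slicing at $[i\sigma,(i+1)\sigma]$) are cosmetic.
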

\begin{proof}[Proof of Proposition \ref{prop:bdfromL1}]
According to Lemma \ref{lem:kernel}, we have, for all $x\in \rr$, all $\theta\in \rr$, and all $t>0$,
\begin{align*}
\tu(x,\theta,t)&=\int_{-\infty}^{\infty}\int_{-\infty}^{\infty} K(t,x,y,\theta,\zeta)\tu_0(y,\zeta)\,d\zeta\,dy
\\&\leq \int_{-\infty}^{\infty}\int_{-\infty}^{\infty} ct^{-1}e^{\frac{-c((x-y)^2+(\theta-\zeta)^2)}{t}}\tu_0(y,\zeta)\,d\zeta\,dy.
\end{align*}
The second inequality follows from the upper bound on the kernel $K$ given by Lemma \ref{lem:kernel}, where we write $c$ instead of $c_2$ to simplify notation. 
Let us take $t=1$ to obtain the following bound for $\tu(x,\theta,1)$:
\begin{equation}
\label{eq:bdbu1}
\begin{split}
\tu(x,\theta,1)&\leq 
c\int_{-\infty}^{\infty}\int_{-\infty}^{\infty} e^{-c((x-y)^2+(\theta-\zeta)^2)} \tu_0(y,\zeta)\,d\zeta\,dy\\
& = c\int_{-\infty}^{\infty} e^{-c(x-y)^2} \int_{-\infty}^{\infty} e^{-c\zeta^2} \tu_0(y,\zeta-\theta)\,d\zeta\,dy.
\end{split}
\end{equation}
We will now split up the integral in $\zeta$ into a sum of  integrals over  intervals of size $\sigma$. We have
\begin{align*}
\int_{-\infty}^{\infty} e^{-c\zeta^2} \tu_0(y,\zeta-\theta)\,d\zeta
&= \sum_{i=-\infty}^{\infty} \int_{i\sigma}^{(i+1)\sigma} e^{-c\zeta^2} \tu_0(y,\zeta-\theta)\,d\zeta.
\end{align*}
For $\zeta\in (i\sigma, (i+1)\sigma)$ we have 
\[
e^{-c\zeta^2}\leq 
\begin{cases} 
e^{-ci^2\sigma^2} \text{ if } i\geq 0\\
 e^{-c(i+1)^2\sigma^2} \text{ if } i< 0.
 \end{cases}
\]
We use this to bound each of the integrals in $\zeta$ and find,
\begin{align*}
\int_{-\infty}^{\infty} e^{-c\zeta^2} \tu_0(y,\zeta-\theta)\,d\zeta
&=  \sum_{i\geq 0} e^{-ci^2\sigma^2} \int_{i\sigma}^{(i+1)\sigma} \tu_0(y,\zeta-\theta)\,d\zeta + \sum_{i< 0} e^{-c(i+1)^2\sigma^2} \int_{i\sigma}^{(i+1)\sigma} \tu_0(y,\zeta-\theta)\,d\zeta.
\end{align*}
For each $i$, we can take $\zeta=i-\sigma/2$ in  assumption (\ref{asm bu0}) of this proposition to obtain , for all $i$,
\[
\int_{i\sigma}^{(i+1)\sigma} \tu_0(y,\zeta-\theta)\,d\zeta\leq C_1.
\]
Therefore,
\begin{align*}
\int_{-\infty}^{\infty} e^{-c\zeta^2} \tu_0(y,\zeta-\theta)\,d\zeta
&\leq  2 C_1\left(1+\sum_{i=1}^{\infty} e^{-ci^2\sigma^2} \right)\leq 2C_1(1+\frac{\pi}{2\sigma\sqrt{c}}),
\end{align*}
where the last inequality follows from Lemma \ref{lem:elem} applied with $a=\sqrt{c}\sigma$. We use $\sigma\leq 1$ (and so $1\leq 1/\sigma$) to bound the right-hand side of the previous line from above and obtain,
\begin{align*}
\int_{-\infty}^{\infty} e^{-c\zeta^2} \tu_0(y,\zeta-\theta)\,d\zeta
&\leq 2C_1\left(\frac{1}{\sigma}+\frac{\sqrt{pi}}{2\sigma\sqrt{c}}\right) = \frac{C_2C_1}{\sigma}.
\end{align*}
We now use this bound on the integral in $\zeta$ in the estimate (\ref{eq:bdbu1}) for $\tu(x,\theta,1)$ and obtain
\begin{align*}
\tu(x,\theta,1)&\leq 
 \frac{C_2C_1c}{\sigma}\int_{-\infty}^{\infty} e^{-c(x-y)^2} \,dy = \frac{CC_1}{\sigma},
\end{align*}
where $C$ depends only on $\theta_m$, $\theta_M$ and $\alpha$. This holds for all $x$ and $\theta$, so the proof is complete.
\end{proof}

\subsection{Proof of  the supremum bound}
We have now established the two auxillary results that we need, and are ready to proceed with:
\begin{proof}[Proof of Theorem \ref{prop:supbd}]
Let  $C_0$ be the constant from Proposition \ref{prop:bdfromL1} and  let $\tilde{C}$ be the constant from Proposition \ref{deriv bd in terms of M}. Define the constants $\bar{C}$ and $M_0$ by
\begin{equation}
\label{barC}
\bar{C} = \tilde{C}(1+ \norm{n_0(x,a(\theta))}{2+\eta, \rr\times\rr})
\end{equation}
and 
\begin{equation}
\label{choiceM0}
M_0=\max\left\{(3e^rC_0)^{\frac{4+2\eta}{\eta}}\bar{C}^{\frac{2}{\eta}}r^{-\frac{2}{\eta}}, e^r\sup_{\rr\times\Theta}n_0, \frac{3e^rC_0}{\theta_M-\theta_m},1, 3e^rC_0\right\}.
\end{equation}
We claim 
%\begin{equation}
%\label{supclaim}
%\sup_{\rr\times\Theta\times (0,\infty)}n\leq M \text{ for all numbers }M>M_0.
%\end{equation}
%f (\ref{supclaim}) holds, then we will have
\[
\sup_{\rr\times\Theta\times (0,\infty)}n\leq M_0.
\]
 We proceed by contradiction: let us assume $\sup_{\rr\times\Theta\times (0,\infty)}n> M_0$. Let us now fix a number $M$  with $M>M_0$ and
\begin{equation}
\label{bdbn}
\sup_{\rr\times\Theta\times (0,\infty)}n>M.
\end{equation}

Throughout the rest of the proof of this proposition, $C$ denotes a positive constant that may change from line to line and depends only on  $\theta_m$, $\theta_M$, $r$, $ \alpha$ and $\eta$ (in particular, $C$ does not depend on $M$).

Let us consider the map $S(t)$ that takes $t$ to the supremum of $n$ at time $t$, in other words:
\[
S(t) =\sup_{x\in \rr,\theta\in \Theta} n (x,\theta, t).
\]
The map $S$ is continuous. In addition, since $n$ satisfies (\ref{n}), and $n$ and $\rho$ are non-negative, we have that $n(x,\theta,t)e^{-t}$ is a subsolution of
\begin{equation}
\label{eqn:ufromzero}
\begin{cases}
u_t=\theta u_{xx}+  \alpha u_{\theta\theta} \text{ on }\rr\times\Theta\times (0,\infty),\\
u(x,\theta_m,t)=u(x,\theta_M,t)=0 \text{ for all }x\in \rr, t\in (0,\infty),\\
u(x,\theta,0)=n_0(x,\theta) \text{ on } \rr\times \Theta.
\end{cases}
\end{equation}
The equation (\ref{eqn:ufromzero}) satisfies the comparison principle. Therefore, we have the following bound on $n(x,\theta,t) e^{-t}$ from above: for all $x\in \rr$ and $\theta\in \Theta$, 
\[
n(x,\theta,t)e^{-t}\leq u(x,\theta,t)\leq \sup_{x\in \rr,\theta\in \Theta} n_0(x,\theta).  
\]
Taking supremum in $x$ and $\theta$ and multiplying by $e^{t}$ gives the bound
\[
S(t)\leq e^{t}\sup_{x\in \rr,\theta\in \Theta} n_0(x,\theta)
\]
for all $t>0$. In particular, taking supremum over $t\in (0,1]$, we have
\[
\sup_{t\in (0,1]} S(t)\leq e\sup_{x\in \rr,\theta\in \Theta} n_0(x,\theta)\leq M_0<M,
\]
where the second inequality follows from the definition of $M_0$. Line (\ref{bdbn}) implies $\sup_t S(t)>M$. Since $S$ is continuous and $S(t)<M$ for $t\leq 1$, there exists a first time $T>1$ for which $S(T)=M$.
So, we have 
\[
\sup_{\rr\times\Theta\times [0,T]} n =M \text{ and }\sup_{\rr\times\Theta} n(\cdot, \cdot, T) =M. 
\]
We will now work with the extension $\bn(x,\theta,t)$ defined in Proposition \ref{deriv bd in terms of M}. By the previous line, we have
\begin{equation}
\label{supn}
\sup_{\rr\times\rr\times [0,T]} \bn =M \text{ and }\sup_{\rr\times\rr} \bn(\cdot, \cdot, T) =M.
\end{equation}
We apply Proposition \ref{deriv bd in terms of M} to $\bn$. Part \ref{item:bneqn} implies that $\bn$ satisfies equation (\ref{eqnbn}). Part (\ref{item:bdbn})   gives us the estimate
\[
\norm{\bn}{2+\eta, \rr\times\rr\times(0,T]} \leq \tilde{C}( M^{2+\eta/2}+\norm{n_0(x,a(\theta))}{2+\eta, \rr\times\rr}).
\]
Since we have $M\geq 1$,  the second term on the right-hand side of the previous line is smaller than 
\[
\norm{n_0(x,a(\theta))}{2+\eta, \rr\times\rr}M^{2+\eta/2},
\]
 so  we find,
\[
\norm{\bn}{2+\eta, \rr\times\rr\times(0,T]} 
\leq  \tilde{C}(M^{2+\eta/2}+\norm{n_0(x,a(\theta))}{2+\eta, \rr\times\rr}M^{2+\eta/2}).
\]
We use our choice of $\bar{C}$ in (\ref{barC}) to bound the right-hand side from the previous line from above and obtain,
\begin{equation}
\label{seminM}
\norm{\bn}{2+\eta, \rr\times\rr\times(0,T]} \leq  \bar{C}M^{2+\eta/2}.
\end{equation}
Let us take 
\[
\sigma=\min \{1,\theta_M-\theta_m, \bar{C}^{-\frac{1}{2+\eta}}M^{-\frac{4+\eta}{4+2\eta}}r^{\frac{1}{2+\eta}}\}
\]
 and define $v(x,\zeta,t):\rr\times\rr\times [0,\infty)\rightarrow \rr$ by
\[
v(x,\zeta,t)=\int_{\zeta-\sigma/2}^{\zeta+\sigma/2} \bn(x,\theta,t)\,d\theta.
\]

\textbf{First Step:} We will prove 
\[
\sup_{\rr\times\Theta\times (0,T)} v\leq 3.
\]

Since $n$ satisfies (\ref{n}), we have that $v$ satisfies
\begin{equation}
\label{eqnv}
v_t(x,\zeta,t)=\int_{\zeta-\sigma/2}^{\zeta+\sigma/2}a(\theta) \bn_{xx}(x,\theta,t)\,d\theta + \alpha v_{\zeta\zeta}(x,\zeta,t) + r v(x,\zeta,t)(1-\rho(x,t)).
\end{equation}

Let us explain why we may assume, without loss of generality, that the supremum of $v$ on $\rr\times\Theta\times (0,T)$ is achieved at some $(x_0,\zeta_0, t_0)$. Since $\zeta\mapsto v(x,\zeta -\theta_m,t)$ is periodic of period $2(\theta_M-\theta_m)$, and we are considering times $t$ in the bounded interval $(0,T)$, we know that there exist $\zeta_0\in \Theta$, $t_0\in [0,T]$ and a sequence $\{x_k\}_{k=1}^{\infty}$ with $v(x_k, \zeta_0, t_0)\rightarrow \left(\sup_{\rr\times\Theta\times (0,T)} v\right)$ as $k\rightarrow \infty$. 
For each $k$, we define the translated functions
\[
\bn^k_{xx}(x,t)=\bn_{xx}(x+x_k,\theta, t),
\] 
\[
\rho^k(x,t)=\rho(x+x_k,t),
\]
and
\[
v^k(x,t)=v(x+x_k,\theta, t).
\] 
We similarly define the translates of the first and second derivatives of $v$. 
We have  $\bn_{xx}\in C^{\eta}$, and according to (\ref{seminM}), 
\[
\norm{\bn_{xx}}{\eta, \rr\times\rr\times[0,T]}\leq \bar{C}M^{2+\eta/2}.
\]
Therefore, $\bn^k_{xx}$, $\rho^k$, $v^k$,  and the translates of the first and second derivatives of $v$ are uniformly bounded and uniformly equicontinuous on $\rr\times\rr\times[0,T]$. Hence, there exists a subsequence (still denoted by $k$) and functions $\rho^\infty$,  $\bn_{xx}^\infty$, and $v^\infty$ such that $\rho^k$, $\bn_{xx}^k$ and $v^k$ converge locally uniformly to $\rho^\infty$,  $\bn_{xx}^\infty$ and $v^\infty$, respectively;  the derivatives of   $v^k$ converge locally uniformly to those of  $v^\infty$; and $v^\infty(x,\zeta,t)= \int_{\zeta-\sigma/2}^{\zeta+\sigma/2} \bn^\infty(x,\theta,t)\, d\theta$.
Moreover, $v^{\infty}$ satisfies
\begin{equation*}
\partial_t v^{\infty}(x,\zeta,t)=\int_{\zeta-\sigma/2}^{\zeta+\sigma/2}a(\theta) \bn^{\infty}_{xx}(x,\theta,t)\,d\theta + \alpha v_{\zeta \zeta}^{\infty}(x,\zeta,t) + r v^{\infty}(x,\zeta,t)(1-\rho^{\infty}(x,t))
\end{equation*}
on $\rr\times\rr\times (0,T)$ and we have,  for all $x\in \rr$, all $\zeta\in \Theta$, and all $t\leq T$,
\[
v^\infty(x,\zeta,t)\leq v^\infty (0,\zeta_0, t_0) =\sup_{\rr\times\Theta\times (0,T)} v;
\]
in other words, $v^\infty$ achieves its supremum on $\rr\times \rr \times (0,T)$. We now drop the superscript $\infty$.

At the point $(x_0,\zeta_0, t_0)$ where $v$ achieves its supremum, we have 
\begin{equation}
\label{derivsv}
v_t(x_0,\zeta_0,t_0)\geq 0, \  \   \   v_{\zeta\zeta}(x_0,\zeta_0, t_0)\leq 0, 
\end{equation}
and 
\begin{equation}
\label{vxx}
0\geq v_{xx}(x_0,\zeta_0,t_0)=\int_{\zeta_0-\sigma/2}^{\zeta_0+\sigma/2} \bn_{xx}(x_0,\theta,t_0)\,d\theta .
\end{equation}
We point out that $v_{xx}$ does not appear in (\ref{eqnv}), the equation that $v$ satisfies. We will bound from above the corresponding term that does appear in (\ref{eqnv}). This term is:
\begin{equation}
\label{term}
\int_{\zeta_0-\sigma/2}^{\zeta_0+\sigma/2}a(\theta) \bn_{xx}(x_0,\theta,t_0)\,d\theta.
\end{equation}
The inequality (\ref{vxx}) implies that there exists $\theta^*\in (\zeta_0-\sigma/2,\zeta_0+\sigma/2)$ with $\bn_{xx}(x_0,\theta^*, t_0)\leq 0$. In addition, let $\tilde{\theta}\in[\zeta_0-\sigma/2,\zeta_0+\sigma/2]$ be so that 
\[
\min_{\theta\in [\zeta_0-\sigma/2,\zeta_0+\sigma/2]} a(\theta) = a(\tilde{\theta}).
\]
Since  $a$ is positive, we multiply (\ref{vxx}) by $-a(\tilde{\theta})$ and find 
\begin{equation*}
0\leq \int_{\zeta_0-\sigma/2}^{\zeta_0+\sigma/2} -a(\tilde{\theta})\bn_{xx}(x_0,\theta,t_0)\,d\theta .
\end{equation*}
Adding the term  (\ref{term}) that we're interested in to both sides of this inequality, we find
\begin{align*}
\int_{\zeta_0-\sigma/2}^{\zeta_0+\sigma/2}a(\theta) \bn_{xx}(x_0,\theta,t_0)\,d\theta &
\leq \int_{\zeta_0-\sigma/2}^{\zeta_0+\sigma/2}(a(\theta)-a(\tilde{\theta}) )\bn_{xx}(x_0,\theta,t_0)\,d\theta.
\end{align*}
Let us recall that $a(\theta)-a(\tilde{\theta})$ is non-negative on $[\zeta_0-\sigma/2,\zeta_0+\sigma/2]$. We thus use the estimate (\ref{seminM}) on the seminorm of $\bn_{xx}$ and the fact that $\bn_{xx}(x_0,\theta^*, t_0)\leq 0$ to estimate the right-hand side of the previous line from above and obtain
\begin{align*}
\int_{\zeta_0-\sigma/2}^{\zeta_0+\sigma/2}a(\theta) \bn_{xx}(x_0,\theta,t_0)\,d\theta &
\leq \int_{\zeta_0-\sigma/2}^{\zeta_0+\sigma/2}(a(\theta)-a(\tilde{\theta}))(\bn_{xx}(x_0,\theta^*,t_0) + |\theta-\theta^*|^{\eta}\bar{C}M^{2+\eta/2})\,d\theta \\
&\leq \int_{\zeta_0-\sigma/2}^{\zeta_0+\sigma/2}(a(\theta)-a(\tilde{\theta}))|\theta-\theta^*|^\eta \bar{C}M^{2+\eta/2}\,d\theta.
\end{align*}
Since $\theta^*\in (\zeta_0-\sigma/2,\zeta_0+\sigma/2)$, we have $|\theta-\theta^*|\leq \sigma$. In addition, $a$ is Lipschitz with Lipschitz constant $1$, so we have $(a(\theta)-a(\tilde{\theta}))\leq |\theta-\tilde{\theta}|$. We use these  two inequalities to bound the right-hand side of the previous line from above and find
\begin{align*}
\int_{\zeta_0-\sigma/2}^{\zeta_0+\sigma/2}a(\theta) \bn_{xx}(x_0,\theta,t_0)\,d\theta &
\leq \bar{C}\sigma^{\eta}M^{2+\eta/2}\int_{\zeta_0-\sigma/2}^{\zeta_0+\sigma/2}|\theta-\tilde{\theta}|\,d\theta\leq \frac{1}{2}\bar{C}\sigma^{2+\eta}M^{2+\eta/2}.
\end{align*}
The last inequality follows by an elementary calculus computation that relies on the fact that  $\tilde{\theta}$ is contained in $[\zeta_0-\sigma/2,\zeta_0+\sigma/2]$. 

Using this estimate together with the information (\ref{derivsv}) about the other derivatives of $v$  at $(x_0,\zeta_0,t_0)$ in the equation (\ref{eqnv}) that $v$ satisfies, we obtain
\begin{equation}
\label{vatmax}
0\leq \frac{\bar{C}}{2}\sigma^{2+\eta}M^{2+\eta/2} + r v(x_0,\zeta_0,t_0)(1-\rho(x_0,t_0)).
\end{equation}
Since $\sigma\leq \theta_M-\theta_m$ we may bound $\rho(x_0,t_0)$ from below by $\frac{v(x_0,\zeta_0,t_0)}{2}$:
\[
 v(x_0,\zeta_0,t_0)= \int_{\zeta_0-\sigma/2}^{\zeta_0+\sigma/2} \bn(x_0,\theta,t_0)\,d\theta \leq 2\int_{\theta_m}^{\theta_M} n(x_0,\theta,t_0)\,d\theta = 2\rho(x_0,t_0).
\]
We use the previous estimate to bound the right-hand side of (\ref{vatmax}) from above and obtain
\[
0\leq \frac{\bar{C}}{2} \sigma^{2+\eta}M^{2+\eta/2} + r v(x_0,\zeta_0,t_0)(1-\frac{1}{2}v(x_0, \zeta_0, t_0)).
\]
Upon rearranging we find,
\begin{equation}
\label{explanationsigma}
\frac{r}{2}v^2(x_0, \zeta_0, t_0)\leq \frac{\bar{C}}{2} \sigma^{2+\eta}M^{2+\eta/2} +r v(x_0,\zeta_0,t_0).
\end{equation}
By our choice of $\sigma$, we have $\sigma\leq \bar{C}^{\frac{-1}{2+\eta}}M^{-\frac{4+\eta}{4+2\eta}} r^{\frac{1}{2+\eta}}=\bar{C}^{\frac{-1}{2+\eta}}M^{-\frac{2+\eta/2}{2+\eta}}r^{\frac{1}{2+\eta}}$. We use this to bound the  right-hand side of the previous line and find 
\[
\frac{r}{2}v^2(x_0,\zeta_0,t_0)\leq \frac{r}{2}+rv(x_0,\zeta_0,t_0),
\]
so 
\[
v(x_0,\zeta_0,t_0)\leq \frac{1}{2}(1+\sqrt{3})\leq 3.
\]
Since  $v$ achieved its supremum on $\rr \times \rr \times (0,T)$ at $(x_0,\zeta_0,t_0)$, we conclude
\begin{equation}
\label{bdv}
\sup_{\rr\times\Theta\times (0,T)}v\leq 3.
\end{equation}

\textbf{Second Step:} We will now deduce a supremum bound on $\bn$ from the bound on $v$. Let us fix any $t^*\in ( 1,T)$. Let $u$ be the solution of 
\begin{equation*}
\begin{cases}
u_t=\theta u_{xx}+ u_{\theta\theta} \text{ on }\rr\times\rr\times (0,\infty),\\
u(x,\theta,0)=\bn(x,\theta,t^*-1) \text{ on } \rr\times\rr.
\end{cases}
\end{equation*}
We have that $\bn(x,\theta,t+t^*-1)e^{-rt}$ is a subsolution of the equation for $u$ for $t\geq 0$. Since they are equal at $t=0$, the comparison principle for the equation for $u$ implies the bound
\[
\bn(x,\theta,t+t^*-1)e^{-rt}\leq u(x,\theta,t)
\]
for all $x, \theta$, and $t\geq 0$. In particular, we evaluate the above at $t=1$ and take supremum in $x$ and $\theta$ to find
\begin{equation}
\label{bdnbyu}
\sup_{x\in\rr, \theta\in \rr} \bn(x,\theta,t^*)\leq e^r\sup_{x\in\rr, \theta\in \rr}u(x,\theta,1).
\end{equation}
We will now apply Proposition \ref{prop:bdfromL1} to $u$. The supremum bound (\ref{bdv}) on $v$  says exactly that  assumption (\ref{asm bu0}) is satisfied, with $C_1=3$.
Therefore, Proposition \ref{prop:bdfromL1} implies
\begin{align*}
\sup_{x\in \rr,\theta\in\rr} u(x,\theta,1)&\leq  \frac{3C_0}{\sigma}.
\end{align*}
Therefore, 
 we may bound $u$ by $\frac{3C_0}{\sigma}$ on the right-hand side of (\ref{bdnbyu}) and find,
\[
\sup_{x\in \rr,\theta\in \rr}\bn(x,\theta,t^*)\leq \frac{3e^rC_0}{\sigma}.
\]
This holds for any $t^*\leq T$, so in particular at $t^*=T$. According to line (\ref{supn}), we have $\sup_{x,\theta}\bn(x,\theta,T)=M$, so we obtain
\begin{equation}
\label{explanationMbound}
M\leq \frac{3e^rC_0}{\sigma}.
\end{equation}
Recall that we chose $\sigma=\min \{\theta_M-\theta_m, 1, \bar{C}^{-\frac{1}{2+\eta}} M^{-\frac{4+\eta}{4+2\eta}}r^{\frac{1}{2+\eta}}\}$. If $\sigma=\theta_M-\theta_m$, then we find $M\leq \frac{3e^rC_0}{\theta_M-\theta_m}$; if $\sigma=1$, then $M\leq 3e^rC_0$; and
if $\sigma=\bar{C}^{\frac{-1}{2+\eta}}M^{-\frac{4+\eta}{4+2\eta}}r^{\frac{1}{2+\eta}}$ we obtain
\[
M\leq 3e^rC_0\bar{C}^{\frac{1}{2+\eta}}M^{\frac{4+\eta}{4+2\eta}}r^{-\frac{1}{2+\eta}},
\]
which implies, since $1-\frac{4+\eta}{4+2\eta} = \frac{\eta}{4+2\eta}$, the following bound for $M$:
\[
M\leq  (3e^rC_0)^{\frac{4+2\eta}{\eta}}\bar{C}^{\frac{4+2\eta}{\eta(2+\eta)}}r^{-\frac{4+2\eta}{\eta(2+\eta)}}= (3e^rC_0)^{\frac{4+2\eta}{\eta}}\bar{C}^{\frac{2}{\eta}}r^{-\frac{2}{\eta}}.
\]
Therefore, 
\[
M\leq \max\left\{\frac{3e^rC_0}{\theta_M-\theta_m}, (3e^rC_0)^{\frac{4+2\eta}{\eta}}\bar{C}^{\frac{2}{\eta}}r^{-\frac{2}{\eta}}, 3e^rC_0,\right\}.
\]
But we had taken $M>M_0$. Recalling our choice of $M_0$ in line (\ref{choiceM0}) yields the desired contradiction and hence the proof is complete. 
\end{proof}

\begin{rem}
\label{remark:choice of nonlinearity}
We remark on other possible choices of the nonlinear term $n(1-\rho)$, specifically as related to the proof of Theorem \ref{prop:supbd}.  (For the purposes of this remark, we use $C$ to denote any constant that does not depend on $M$.)  The exponent $2+\eta/2$ in the right-hand side of the estimate of item (\ref{item:bdbn}) of Proposition \ref{deriv bd in terms of M} is very important. Let us  suppose the estimate read,
\[
\norm{\bn}{2+\eta, \rr\times\rr\times(0,T]} \leq C (M^{p}+1),
\]
for some $p>0$. The key difference in step one of the proof of Theorem \ref{prop:supbd} is that line (\ref{explanationsigma}) would have the term $C\sigma^{2+\eta}M^p$ instead of $C\sigma^{2+\eta}M^{2+\eta/2}$. Thus, we would need to choose $\sigma = CM^{-\frac{p}{2+\eta}}$ in order to proceed with the estimate on $v$. Then, in line (\ref{explanationMbound}) in the second step of the proof, we would obtain 
\[
M\leq \frac{C}{\sigma}=CM^{\frac{p}{2+\eta}}.
\]
Hence, to deduce a bound on $M$ we need $p$ to satisfy $p<2+\eta$. %Indeed, the exponent $p=2+\eta/2$ satisfies this, as $\eta$ is strictly positive.

The value of $p$ in the estimate of item (\ref{item:bdbn}) of Proposition \ref{deriv bd in terms of M} is determined by the stucture of the nonlinear term $n(1-\rho)$. For example, analyzing the proof of Proposition \ref{deriv bd in terms of M} yields that if the nonlinear term were $n^\gamma(1-\rho)$, for some $\gamma\geq 1$, the conclusion would hold with $p=1+\gamma(1+\eta/2)$. Since our method requires $p<2+\eta$, it would work for $\gamma < \frac{1+\eta}{1+\eta/2}$ (and, indeed, $\gamma=1$ satisfies this). We leave further considerations regarding different choices of nonlinearities for future work.
\end{rem}

\section{Estimates on $u^\ep$}
\label{sec:bdutheta}
In this section we give the proof of Proposition \ref{bd on utheta}. We believe that the arguments in this section are the most technical ones of the paper.

Let us point out that $u^\ep(x,\theta,t)=\ep \ln n^\ep(x,\theta,t)$ satisfies 
\begin{equation}
\tag{$U_\ep$}
\label{eqn:uep}
\begin{cases}
\partial_t u^\ep=\ep \theta \partial^2_{xx} u^\ep +\frac{\alpha}{\ep}\partial^2_{\theta \theta}u^\ep +\theta (\partial_x u^\ep)^2 + \alpha\left(\frac{\partial_\theta u^\ep}{\ep}\right)^2 +r(1-\rho^\ep)  \text{ on }\rr\times \Theta \times (0,\infty),\\
\partial_\theta u^\ep (x,\theta_m,t)=\partial_\theta u^\ep (x,\theta_M,t)=0 \text{ for all }(x,t)\in \rr\times  (0,\infty),\\
u^\ep (x, \theta, 0)= \ep \ln (n_0(x,\theta)).
\end{cases}
\end{equation}
 In subsection \ref{subsec:lowbd} we establish the upper and lower bounds on $u^\ep$. The upper bound follows directly from Corollary \ref{supbd:cor}, while the lower bound needs a barrier argument that is quite similar to \cite[Lemma 2.1]{EvansSouganidis} and \cite[Lemma 1.2 item (i)]{BarlesEvansSouganidis}. We skip a few details in the proof because it is quite similar to the cited ones. 
  In subsection \ref{subsec:grad} we prove the gradient estimate by an application of the so-called ``Bernstein method" (see \cite[Lemma 2.1]{EvansSouganidis} and  \cite[Lemma 2.2]{BarlesEvansSouganidis}). This argument is quite lengthy and has a some rather technical parts. We provide all the details.

\subsection{Proof of upper and lower bounds of Proposition \ref{bd on utheta}}
\label{subsec:lowbd}
In order to construct the barrier necessary for the proof of the lower bound, we need a lemma. The proof of the lemma  is an elementary computation which we include  for the sake of completeness.
\begin{lem}
\label{lem:barrier}
Given  $R<1/2$ we define the function $\xi(z)$ for $z\in (-R,R)$ by,
\[
\xi(z)=\frac{1}{z^2 - R^2}.
\]
There exists a constant $C$ that depends only on $R$ such that for all $z\in (-R, R)$ and for all $0<\ep<1$,
\[
-\xi_{zz}(z)-(\xi_z(z))^2\leq C \text{ and } -\ep \xi_{zz}(z)-(\xi_z(z))^2\leq C.
\]
We remark that $z$ is negative.
\end{lem}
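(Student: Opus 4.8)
The plan is to prove both inequalities by a direct computation of the derivatives of $\xi$, followed by an elementary estimate that exploits the fact that $\xi_z^2$ dominates $|\xi_{zz}|$ uniformly on the compact interval $(-R,R)$ once we account for the blow-up at the endpoints. First I would compute
\[
\xi_z(z) = \frac{-2z}{(z^2-R^2)^2}, \qquad \xi_{zz}(z) = \frac{-2(z^2-R^2)^2 + 2z\cdot 2(z^2-R^2)\cdot 2z}{(z^2-R^2)^4} = \frac{6z^2+2R^2}{(z^2-R^2)^3}.
\]
(Note $z^2-R^2<0$ on $(-R,R)$, so $\xi<0$, $\xi_{zz}>0$, and the sign of $\xi_z$ is that of $-z$.) Then $(\xi_z)^2 = \dfrac{4z^2}{(z^2-R^2)^4}$ and $-\xi_{zz} = \dfrac{6z^2+2R^2}{|z^2-R^2|^3}$, both of which are nonnegative.

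The key step is to show $-\xi_{zz}(z) - (\xi_z(z))^2 \le C$, i.e.\ that $(\xi_z)^2$ beats $-\xi_{zz}$ up to an additive constant. Writing $w = R^2 - z^2 \in (0, R^2]$, we have $-\xi_{zz} = \frac{6z^2+2R^2}{w^3} \le \frac{8R^2}{w^3}$ and $(\xi_z)^2 = \frac{4z^2}{w^4}$. The worrisome region is $w$ small (i.e.\ $z$ near $\pm R$), where both terms blow up; there $z^2 \ge R^2/2$ say, so $(\xi_z)^2 \ge \frac{2R^2}{w^4} \ge \frac{2R^2}{R^2 w^3} \cdot \frac{1}{1} \ge \frac{2}{w^3}$... more carefully, for $w \le R^2/2$ one has $z^2 = R^2 - w \ge R^2/2$, hence $(\xi_z)^2 = \frac{4z^2}{w^4} \ge \frac{2R^2}{w^4} \ge \frac{8R^2}{w^3}$ precisely when $\frac{2R^2}{w^4} \ge \frac{8R^2}{w^3}$, i.e.\ $w \le 1/4$; since $R < 1/2$ we have $w \le R^2 < 1/4$, so on the whole interval $-\xi_{zz} \le (\xi_z)^2$, giving $-\xi_{zz} - (\xi_z)^2 \le 0 \le C$. (One should double-check the constant comparison; if one prefers to avoid sign-chasing, simply bound $-\xi_{zz} - (\xi_z)^2 \le -\xi_{zz} \le \sup_{(-R,R)}(-\xi_{zz})$ is \emph{not} available since that sup is $+\infty$ — so the cancellation with $(\xi_z)^2$ is genuinely needed, which is the one real point of the lemma.)

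For the second inequality, since $0 < \ep < 1$ and $-\xi_{zz} < 0$ (because $\xi_{zz} > 0$), we have $-\ep\,\xi_{zz}(z) \le 0 \le -\xi_{zz}(z) \cdot 0$... rather: $-\ep \xi_{zz} = \ep(-\xi_{zz})$ and $-\xi_{zz} \le 0$, so $-\ep\xi_{zz} \le 0$ as well, and therefore $-\ep \xi_{zz} - (\xi_z)^2 \le -(\xi_z)^2 \le 0 \le C$. Thus the same constant $C$ (in fact $C = 0$ works, but it is cleaner to just assert existence of such a $C$) handles both cases, and the dependence is only on $R$. The main obstacle, such as it is, is organizing the endpoint analysis cleanly so that the $(\xi_z)^2$ term is seen to absorb the blow-up of $-\xi_{zz}$; once the explicit formulas for $\xi_z$ and $\xi_{zz}$ are in hand, everything reduces to a one-variable inequality in $w = R^2 - z^2$, and the $\ep$-version is immediate from the favorable sign of $-\ep \xi_{zz}$.
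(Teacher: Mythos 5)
Your derivative computations are correct and your reduction to the variable $w=R^2-z^2$ is essentially the same idea as the paper's proof (which also splits the interval at $|z|=R/\sqrt{2}$). However, there are two genuine errors. First, the claim that $-\xi_{zz}\leq (\xi_z)^2$ holds \emph{on the whole interval} (and hence that $C=0$ works) is false: your lower bound $(\xi_z)^2\geq 2R^2/w^4$ uses $z^2\geq R^2/2$, which is only available when $w\leq R^2/2$, i.e.\ near the endpoints. In the middle region it fails badly — at $z=0$ you have $(\xi_z)^2=0$ while $-\xi_{zz}=2/R^4>0$, so $-\xi_{zz}-(\xi_z)^2=2/R^4>0$ there. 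The fix is easy but must be stated: for $w>R^2/2$ simply bound $-\xi_{zz}\leq 8R^2/(R^2/2)^3=64/R^4$ and drop the $-(\xi_z)^2$ term, so the lemma holds with a constant like $C=64/R^4$, not $C=0$.

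Second, and more seriously for the $\ep$-inequality, you have the sign of $\xi_{zz}$ wrong. Since $(z^2-R^2)^3<0$ on $(-R,R)$ and $6z^2+2R^2>0$, we get $\xi_{zz}<0$ (the function $\xi$ is concave, plunging to $-\infty$ at both endpoints), so $-\xi_{zz}>0$ — which is in fact exactly what your own displayed formula $-\xi_{zz}=(6z^2+2R^2)/|z^2-R^2|^3$ shows, and what you rely on when you treat $-\xi_{zz}$ as a blow-up term needing to be absorbed by $(\xi_z)^2$. Your final paragraph then contradicts this by asserting $\xi_{zz}>0$ and $-\ep\xi_{zz}\leq 0$, and the entire argument for $-\ep\xi_{zz}-(\xi_z)^2\leq C$ rests on that false inequality. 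The correct argument runs the other way: because $\xi_{zz}\leq 0$, for $0<\ep<1$ we have $-\ep\xi_{zz}=\ep(-\xi_{zz})\leq -\xi_{zz}$, hence $-\ep\xi_{zz}-(\xi_z)^2\leq -\xi_{zz}-(\xi_z)^2\leq C$, so the second inequality follows from the first with the same constant. This is precisely how the paper concludes.
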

\begin{proof}
Computing the derivatives of $\xi$, substituting them into $-\xi_{zz}(z)-(\xi_z(z))^2$ and rearranging yields,  for $z\in (-R,R)$,
\begin{align*}
-\xi_{zz}(z)&-(\xi_z(z))^2 
=  \frac{ 2 }{(z^2-R^2)^2}\left( 1  + \frac{2z^2}{R^2-z^2}\left( 2 - \frac{1}{R^2-z^2}\right)\right).
\end{align*}
We have  $R^2-z^2 \leq R^2\leq 1/4$, so that the term in the inner-most parentheses on the right-hand side of the previous line is bounded from above by $-2$.  Consequently we find, for $z\in (-R,R)$, 
\begin{align}
\label{psieqn}
-\xi_{zz}(z)&-(\xi_z(z))^2 \leq  \frac{ 2 }{(z^2-R^2)^2}\left( 1  -2 \frac{2z^2}{R^2-z^2}\right).
\end{align}
We now consider $z$ such that $\frac{R}{\sqrt{2}}\leq |z|\leq R$, in which case we have $0\leq R^2-z^2\leq R^2 - R^2/2=R^2/2$. Therefore, 
\[
\frac{z^2}{R^2-z^2}\geq 2\frac{z^2}{R^2} \geq 1,
\]
where we have again used that $ |z|\geq \frac{R}{\sqrt{2}}$. Hence the term in the parenthesis on the right-hand side of (\ref{psieqn}) is bounded from above by $-3$. Therefore,  the right-hand side of (\ref{psieqn}) is negative for  $\frac{R}{\sqrt{2}}\leq |z|\leq R$. Thus it is left to consider $z$ such that $|z|\leq \frac{R}{\sqrt{2}}$. But in this case, we see $R^2\geq R^2-z^2\geq R^2/2$. Using this, together with the fact that the term in the parenthesis on the right-hand side of (\ref{psieqn}) is bounded from above by 1, and find,
\begin{align*}
-\xi_{zz}(z)&-(\xi_z(z))^2 \leq  \frac{ 2 }{(z^2-R^2)^2}\leq 8\frac{1}{R^4}.
\end{align*}
Since $\xi_{zz}(z)$ is non-positive for $z\in (-R,R)$, we have, for $0<\ep<1$ and $z\in (-R,R)$, 
\[
-\ep \xi_{zz}(z)\leq -\xi_{zz}(z),
\]
 so that,  
 \[
 -\ep \xi_{zz}(z)-(\xi_z(z))^2 \leq -\xi_{zz}(z)-(\xi_z(z))^2\leq \frac{8}{R^4} .
 \]
  This completes the proof of the lemma.
 \end{proof}
 
The proof of the lower bound proceeds in two steps. First we fix a cube $K_R\subset \rr\times \Theta$ on which $n^0$ is strictly positive. Thus, $u^\ep(x, \theta, 0)= \ep \ln n^0(x,\theta)$ is bounded from below on this cube. We use this, together with the barrier $\xi$, to obtain a lower bound on $u^\ep(x,\theta,t)$ for $(x,\theta)$ in a smaller cube $K_R$ and for  times $t\in [0,T]$. The second step is to use this lower bound to build a barrier for $u^\ep$ on the remainder of $\rr\times \Theta\times (0,T]$. This barrier function is infinite at $\{t=0\}$, hence  the bound we obtain does not reach the initial time $0$. % This is why we require, in the statement of Proposition  \ref{bd on utheta}, that 

\begin{proof}[Proof of the  lower bound of Proposition \ref{bd on utheta} ]
Due to Corollary \ref{supbd:cor}, there exists a positive constant $M$ that depends only on $\theta_m$, $\theta_M$, $\alpha$ and $r$ such that $u^\ep$ is a supersolution of
\begin{equation}
\label{eqn:uepsuper}
\partial_t u^\ep=\ep \theta \partial^2_{xx} u^\ep +\frac{\alpha}{\ep}\partial^2_{\theta \theta}u^\ep +\theta (\partial_x u^\ep)^2 + \alpha\left(\frac{\partial_\theta u^\ep}{\ep}\right)^2 -M  \text{ on }\rr\times \Theta \times (0,\infty).
\end{equation}
\textbf{Step one:} Let us fix some $x_0\in \rr$ and $\theta_0\in \Theta$ with $n^0(x_0,\theta_0)>0$, and $R>0$ small so that $n^0(x,\theta)>0$ for all $(x,\theta)$ in the cube $K_R := (x_0-R, x_0+R)\times(\theta_0-R, \theta_0+R)$. Without loss of generality we may take $R<1/2$. Since $n_0$ is continuous and positive in $\bar{K}_R$, there exists a constant $\beta\leq 1$ such that $\inf_{\bar{K}_R} n_0 \geq \beta$. Therefore, if $(x,\theta)$ is contained in $\bar{K}_R$,  we have
\[
u^\ep(x,\theta,0)=\ep \ln n^0(x,\theta) \geq \ln (\beta).
\]
Let $C$ be the constant given by Lemma \ref{lem:barrier}, and define $a=M+(\theta_M +\alpha) C$. We define the function $\phi$ by,
\[
 \phi(x,\theta,t)= \xi(x-x_0)+\ep^2 \xi(\theta-\theta_0) -at+\ln(\beta)=\frac{1}{(x-x_0)^2-R^2} + \frac{\ep^2}{(\theta-\theta_0)^2-R^2} -at+\ln(\beta).
\]
Using Lemma \ref{lem:barrier}, it is easy to check that $\phi$ is a subsolution of (\ref{eqn:uepsuper}) in $K_R\times (0,\infty)$. Moreover, $\phi$ lies below $u^\ep$ on the parabolic boundary of $K_R \times (0,\infty)$. Indeed, for points $(x,\theta)\in \bar{K}_R$, we have $ \phi(x,\theta,0)\leq \ln(\beta)\leq u^\ep(x,\theta,0)$. And, if $(x,\theta)$ is contained in $\bdry K_R$, then $ \phi(x,\theta,t)=-\infty \leq u^\ep(x,\theta,t)$ for all $t>0$. Therefore, the comparison principle for equation (\ref{eqn:uepsuper}) implies 
\begin{equation}
\label{ugeqphi}
 u^\ep \geq \phi \text{ on }K_R\times (0,\infty).
 \end{equation} 
Let us fix some $T>0$. By (\ref{ugeqphi}) and the definition of $\phi$, we have,
\begin{equation}
\label{deftau}
 u^\ep(x,\theta,t) \geq -\frac{1}{4R^2} -\frac{\ep^2}{4R^2} -aT+\ln(\beta) =:\tau, \text{ for all }(x,\theta,t)\in \bar{K}_{R/2}\times [0,T].
\end{equation}
\textbf{Step two:} We will now use this lower bound to build a barrier for $u^\ep$ on $((\rr\times \Theta)\setminus K_{R/2}) \times (0,T)$. We define the function $\psi$ by,
\begin{equation*}
 \psi(x, \theta,t) = -\frac{b (x-x_0)^2}{t} - c t  +\tau.
\end{equation*}
We see that for $b$ and $c$ properly chosen and depending only on $T$, $R$, $\theta_m$, $\theta_M$, $\alpha$ and $r$, we have  that $\psi$ is a subsolution of (\ref{eqn:uepsuper}) on  $((\rr\times \Theta)\setminus K_{R/4}) \times (0,T)$. 
Moreover, we have $ \psi(x, \theta,t)\leq \tau$ for all $(x,\theta,t)$.  In particular, if $(x,\theta)\in \bdry K_{R/2}$ and $t\in [0,T]$, then, according to (\ref{deftau}), we have
\[
u^\ep(x,\theta,t) \geq \tau \geq \psi(x, \theta,t).
\]
Since $\psi_\theta \equiv 0$ and $\psi(x,\theta,0)=-\infty$ for all $(x,\theta)$, a comparison principle argument similar to that of  \cite[Lemma 2.1]{EvansSouganidis}  implies 
\[
 u^\ep \geq \psi \text{ on } ( (\rr\times \Theta)\setminus K_{R/2} )\times (0,T).
\]
Together with (\ref{ugeqphi}), the previous  estimate implies that if $Q\subset \subset \rr\times (0,\infty)$, then there exists a constant $C$ that depends on $Q$, $\alpha$, $r$, $\theta_m$ and $\theta_M$ such that for all  $0<\ep<1$ and for all $(x,t)\in Q $ and all $\theta\in\Theta$, we have
\[
 u^\ep(x,\theta,t)\geq -C.
\]
\end{proof}

\begin{proof}[Proof of the  upper bound of Proposition \ref{bd on utheta} ]
According to Corollary \ref{supbd:cor}, the estimate $\sup_{\rr\times\Theta\times(0,\infty)} n^\ep \leq C$ holds for a constant $C$ depending only on $\theta_m$, $\theta_M$, $\alpha$ and $r$. The upper bound on $u^\ep$ follows since, by definition we have $u^\ep = \ep \ln n^\ep$.
\end{proof}

\subsection{Proof of the gradient bound of Proposition \ref{bd on utheta}}
\label{subsec:grad}
\subsubsection{Idea of the proof -- strategy and challenges}
\label{subsubsect:ideagradient}
The proof of the gradient bound proceeds via a Bernstein argument -- in other words, we use that the derivatives of $u$ also satisfy certain PDEs in order to obtain estimates on them. We are most interested in obtaining an estimate on the derivative in $\theta$. To this end, let us denote $z=u^\ep_\theta$ and differentiate the equation  (\ref{eqn:uep}) for $u^\ep$ in $\theta$ to  find that $z$ satisfies,
\begin{equation}
\label{eq:demo}
z_t = \ep \theta z_{ x x} + \frac{\alpha}{\ep}z_{\theta \theta} + 2\theta u^\ep_x z_x +(u^\ep_x)^2 + \frac{2\alpha}{\ep^2} z z_\theta + \ep u^\ep_{xx}. 
\end{equation}
We bring the reader's attention to the last term, which involves a \emph{second} derivative of $u^\ep$ in the space variable $x$. This means that we cannot ``ignore" the space variable and must try to estimate $u^\ep_\theta$ and $u^\ep_x$ at the same time. In particular, the essense of our strategy is to consider the PDE satisfied not by $z$ but by $(u^\ep_\theta)^2 + \frac{1}{\ep^2}(u^\ep_x)^2$, and obtain estimates on this quantity.  

Our proof is similar to that of \cite[Lemma 2.4]{FS} and \cite[Lemma 2.1]{EvansSouganidis}, which use a Bernstein argument to obtain gradient bounds for  Hamilton-Jacobi equations with variable coefficients. However, our situation is more delicate because of the different scaling in the space and trait variable.  

\begin{rem}
\label{rem:gradbdBM}
The term $\ep u^\ep_{xx}$ in (\ref{eq:demo})  only arises because the diffusion coefficient is not constant in $\theta$. Let us compare our method to that of the proof of Lemma 2 of \cite{BouinMirrahimi}. We recall that the authors of \cite{BouinMirrahimi} consider the PDE (\ref{eqnBM}), which, after a rescaling and an exponential transformation, yields an equation similar to  our equation  (\ref{eqn:uep}) for $u^\ep$, but with constant diffusion coefficient. Thus, they are able to carry out the strategy we first describe -- essentially, they differentiate their PDE in $\theta$ and, because extraneous second derivative terms do not appear, they are able to use this equation to obtain the desired estimate. %They do not encounter extraneous second derivative terms and hence their proof is simpler and less technical than the one of our paper.
\end{rem}

\subsubsection{The proof of the gradient estimate}
Most of this subsection is devoted to the proof of:
\begin{prop}
\label{prop:gradbd}
Fix $0<\ep<1$. We denote $(x,\theta)$ by $y$. Suppose $u\in C^3(\rr\times\Theta\times (0,\infty))$ satisfies
\begin{equation}
\label{eqnu}
\begin{cases}
\partial_t u = \tr(A(y)D^2u)+G(Du,y)+f(y) \text{ on }\rr\times \Theta\times (0,\infty),\\
\partial_\theta u (x,\theta_m,t) =\partial_\theta u(x,\theta_M,t) = 0 \text{ for all }(x,t)\in \rr\times (0,\infty),
\end{cases}
\end{equation}
where the coefficients $A$ and $G$ are given by
 \[
A(y)=\left(\begin{matrix}
\ep y_2 &0\\
0&\frac{\alpha}{\ep}
\end{matrix}\right),  \  \   
G(p,y)=y_2 p_1^2+\frac{\alpha}{\ep^2}p_2^2
\]
and $f$ is independent of $y_2$ and satisfies
\begin{equation}
\label{linff}
-M\leq f\leq r
\end{equation} 
and
\begin{equation}
\label{fyasump}
\linfty{f_{y_1}}{\rr\times\Theta\times (0,\infty)}\leq \ep^{-1}M.
\end{equation} 
Given $Q\subset\subset Q' \subset\subset  \rr\times(0,T)$, there exists a constant $C$ that depends only on $Q$, $Q'$, $\linfty{u}{Q'\times \Theta}$, $\theta_m$, $\theta_M$, $\alpha$, $r$ and $M$ such that
\begin{equation}
\label{eq:lemuy2}
\sup_{Q\times\Theta} u_{y_2}^2\leq \ep C
\end{equation}
and
\begin{equation}
\label{eq:lemuy1}
\sup_{Q\times\Theta} u_{y_1}^2\leq \frac{C}{\ep}.
\end{equation}
\end{prop}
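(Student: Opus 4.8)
The proof runs a localized Bernstein argument on a gradient quantity weighted to respect the different diffusive scalings in the $x$ and $\theta$ directions. First I would remove the $\theta$-boundary: using the homogeneous Neumann conditions and the even-reflection-and-periodization map $a(\cdot)$ of Section~\ref{sec:sup bd}, one extends $u(x,\cdot,t)$ to $\theta\in\rr$ so that it solves the same equation with $y_2$ replaced by $a(y_2)$; the resulting diffusion matrix $\operatorname{diag}(\ep\,a(y_2),\alpha/\ep)$ stays uniformly elliptic, and $f$ extends unchanged since it is independent of $y_2$. (Alternatively one keeps the boundary and notes that $u_{y_2}$, and — upon differentiating the Neumann condition in $x$ — also $u_{y_1y_2}$, vanish on $\partial\Theta$, so the quantity $w$ below satisfies a homogeneous Neumann condition there.) Since $Q\subset\subset Q'\subset\subset\rr\times(0,T)$, it then suffices to localize only in $(x,t)$: fix a smooth cutoff $\chi(x,t)$ with $\chi\equiv1$ on $Q$, $\supp\chi\subset Q'$, $0\le\chi\le1$.

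Differentiating \eqref{eqnu} in $y_1$ and in $y_2$, and using that $A$ and $G$ depend on $y$ only through $y_2$ while $f$ depends only on $y_1$, one gets, with $b:=G_p(Du,y)=\bigl(2y_2u_{y_1},\,2\alpha\ep^{-2}u_{y_2}\bigr)$,
\[
\partial_t u_{y_1}=\tr(A D^2u_{y_1})+b\cdot Du_{y_1}+f_{y_1},\qquad \partial_t u_{y_2}=\tr(A D^2u_{y_2})+b\cdot Du_{y_2}+\ep\,u_{y_1y_1}+u_{y_1}^2 .
\]
The term $\ep\,u_{y_1y_1}$ — a second $x$-derivative in the equation for the $\theta$-derivative, absent when the diffusion is $\theta$-independent (cf.\ Remark~\ref{rem:gradbdBM}) — forces $u_{y_1}$ and $u_{y_2}$ to be controlled simultaneously. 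I therefore work with
\[
w:=\ep\,u_{y_1}^2+u_{y_2}^2 ,
\]
so that both \eqref{eq:lemuy1} and \eqref{eq:lemuy2} follow from the single estimate $\sup_{Q\times\Theta}w\le\ep C$ (using $\ep<1$).

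Using $\tr(A D^2(g^2))=2g\,\tr(A D^2g)+2(Dg)^\top A\,Dg$ and the two equations above,
\[
\partial_t w-\tr(A D^2w)-b\cdot Dw=2\ep\,u_{y_1}f_{y_1}+2\,u_{y_2}u_{y_1}^2+2\ep\,u_{y_2}u_{y_1y_1}-2\ep(Du_{y_1})^\top A\,Du_{y_1}-2(Du_{y_2})^\top A\,Du_{y_2}.
\]
The last two terms are strictly negative; the piece $2\ep^2y_2u_{y_1y_1}^2$ of the dissipation is the key one. By Young's inequality — and here the $\ep$-weight on $u_{y_1}^2$ is exactly what keeps the residual lower order — the bad term $2\ep\,u_{y_2}u_{y_1y_1}$ is absorbed into $2\ep^2y_2u_{y_1y_1}^2$ at the cost of $C\theta_m^{-1}u_{y_2}^2\le Cw$, and $2\ep\,u_{y_1}f_{y_1}$ is controlled by \eqref{linff}, \eqref{fyasump} and $\ep<1$ at the cost of $C(1+w)$. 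The delicate term is the nonlinearity $2u_{y_2}u_{y_1}^2$ coming from $G_{y_2}=u_{y_1}^2$: I would absorb it by adding to $w$ a suitable multiple of $u$ — note $b\cdot Du=2G(Du,y)$, so $\partial_tu-\tr(A D^2u)-b\cdot Du=f-G$ supplies the dissipative terms $-\lambda y_2u_{y_1}^2$ and $-\lambda\alpha\ep^{-2}u_{y_2}^2$ — after first obtaining, if needed, a cruder bound on $w$ in a preliminary pass so that $\lambda$ can be chosen large enough to dominate $2u_{y_2}u_{y_1}^2$. With a differential inequality of the form $\partial_t w-\tr(A D^2w)-b\cdot Dw\le C(1+w)$ (with the extra $-\lambda y_2u_{y_1}^2$ still available) in hand, I multiply by $\chi^2$, evaluate at an interior maximum of $\chi^2w$ over $Q'\times\bar\Theta\times(0,T)$ (discarding the $\theta$-boundary via the Neumann/reflection remark and the rest via $\supp\chi\subset Q'$), and at that point combine $\partial_t(\chi^2w)\ge0$, $D(\chi^2w)=0$, $D^2(\chi^2w)\le0$ with the ellipticity of $A$, $\linfty{u}{Q'\times\Theta}$, the bounds on the derivatives of $\chi$, and the remaining dissipation to conclude $\sup_{Q\times\Theta}w\le\ep C$.

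I expect the main obstacle to be the handling of the source terms $\ep\,u_{y_1y_1}$ and $u_{y_1}^2$ in the $u_{y_2}$-equation: they rule out estimating $u_{y_2}$ on its own, dictate the anisotropic weighting $w=\ep u_{y_1}^2+u_{y_2}^2$ and the auxiliary $u$-term, and — for the \emph{sharp} power of $\ep$ in \eqref{eq:lemuy2} rather than a mere $O(1)$ bound — require careful tracking of $\ep$-powers through every Young's inequality, together with use of the strong $\alpha/\ep$ diffusion in $\theta$ (with $u_{y_2}$ vanishing on $\partial\Theta$), which damps $u_{y_2}$ at scale $\ep$.
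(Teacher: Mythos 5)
Your architecture matches the paper's in broad outline: a Bernstein argument on an anisotropically weighted gradient quantity, a cutoff in $(x,t)$ only, the $\theta$-boundary handled via reflection or via the vanishing of $u_{y_2}$ and $u_{y_1y_2}$ on $\partial\Theta$, and a multiple of $u$ added so that $\partial_t u-\tr(AD^2u)-G_p\cdot Du=f-G$ supplies a dissipative $-\lambda G$. The gap is in the choice of weights, and it blocks the sharp estimate (\ref{eq:lemuy2}). First, your bookkeeping for the source term is off: by (\ref{fyasump}) one has $2\ep\, u_{y_1}f_{y_1}\le 2M|u_{y_1}|\le M(1+u_{y_1}^2)\le C(1+w/\ep)$, not $C(1+w)$. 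The only dissipation of order $w/\ep$ available is $\lambda G$, and since $G$ can be as small as $\theta_m w/\ep$ (e.g.\ where $u_{y_2}=0$), absorbing $Cw/\ep$ forces $\lambda\gtrsim M/\theta_m$, a constant independent of $\ep$. But then transferring the bound from the maximum point $(y_0,t_0)$ of $\chi^2(w+\lambda u)$ back to $Q$ costs $2\lambda\linfty{u}{Q'\times\Theta}=O(1)$: on $Q$ one only gets $w\le w(y_0,t_0)+2\lambda\linfty{u}{Q'\times\Theta}\le C$, i.e.\ $u_{y_2}^2\le C$ rather than $\le\ep C$ — and the factor $\ep$ in (\ref{eq:lemuy2}) is precisely what Proposition \ref{bd on utheta} and the rest of the paper require. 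The paper avoids this by putting weight $\alpha/\ep^2$ (not $1$) on $u_{y_2}^2$, i.e.\ running Bernstein on $\zeta G(Du,y)+\lambda u$ with $\lambda\sim\ep^{-1}$: the combined quantity is then only bounded by $C/\ep$ on $Q$, but the $\ep^{-2}$ weight converts that into $u_{y_2}^2\le C\ep$ (and $u_{y_1}^2\le C/\ep$). Your closing remark about "using the strong $\alpha/\ep$ diffusion" points in the right direction, but with $w=\ep u_{y_1}^2+u_{y_2}^2$ no amount of careful tracking of $\ep$-powers recovers the lost factor.

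A secondary omission: at the maximum of the localized quantity, killing the drift via $D(\chi^2 w)=0$ leaves the term $G_p(Du,y)\cdot D(\chi^2)\,w\sim |u_{y_1}|\,w$, which is superlinear in the unknown and is not covered by your "$C(1+w)$" right-hand side. The paper absorbs the analogous term $G_p\cdot D\zeta\, G$ by allowing $\lambda$ to take the alternative value $C\sup_{Q'}|\zeta u_{y_1}|$ and closing a small bootstrap; your outline does not account for this term, and some such device is needed.
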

%Proposition \ref{prop:gradbd} is essentially the same as  Part (II)  of Proposition \ref{bd on utheta}, but  we believe the notation in Proposition \ref{prop:gradbd} is more suited to the presentation of the proof. Let us now describe how Part (II)  of Proposition \ref{bd on utheta} follows from Proposition \ref{prop:gradbd}. 

We remark that we only use the estimate (\ref{eq:lemuy2}) in the remainder of the paper. Nevertheless, our proof ``automatically" yields estimate (\ref{eq:lemuy1}) as well, so we state it for the sake of completeness. 

The gradient bound of Proposition \ref{bd on utheta} follows  from Proposition \ref{prop:gradbd} once we  verify its hypotheses:
\begin{proof}[Proof of the gradient bound of Proposition \ref{bd on utheta}]
We apply  Proposition \ref{prop:gradbd}  to $u^\ep$ with $f=r(1-\rho^\ep)$.  According to the supremum bound of Proposition \ref{bd on utheta}, which we have just established, we have that $u^\ep$ is bounded on $Q'\times \Theta$, uniformly in $\ep$. 

 According to Corollary \ref{supbd:cor} we have that $n^\ep$, and hence $\rho^\ep$, is uniformly bounded from above. In addition, $\rho^\ep$ is non-negative. Hence the hypothesis (\ref{linff}) is satisfied.
 
Now let us demonstrate that  (\ref{fyasump}) holds: since we have $f=r(1-\rho^\ep)$ and $y=(y_1,y_2)=(x,\theta)$, this amounts to establishing
\[
\linfty{\rho^\ep_x}{\rr\times\Theta\times (0,\infty)}\leq \ep^{-1}M
\]
for some constant $M$. Let us define $n(x,\theta,t)= n^\ep(\ep x,\theta,\ep t)$. Since $n^\ep$ satisfies (\ref{nep}) and $n_0$ satisfies (A\ref{asm n0}), we have that $n$ satisfies (\ref{n}) with initial data that satisfies (A\ref{asm n0}). Thus, according to Proposition \ref{deriv bd in terms of M} and Theorem \ref{prop:supbd}, $n$ is bounded in $C^{2,\eta}$. Therefore, there exists a constant $C$ so that, for all $(x,\theta,t)$,
\[
|n^\ep_x(x,\theta,t)| = \frac{1}{\ep} \left|n_x\left(\frac{x}{\ep},\theta,\frac{t}{\ep}\right) \right| \leq \frac{C}{\ep}.
\]
Integrating in $\theta$ yields the desired estimate for $\rho^\ep_x$. 
\end{proof}

For the proof of Proposition \ref{prop:gradbd} we will need the following elementary facts. We summarize them as a lemma and omit the proof.
\begin{lem}
\label{matrixlem}
For any diagonal $n\times n$ positive definite matrix $M$, any $n\times n$ matrices $X$, $Y$, and any $\beta>0$, we have
\begin{equation}
\label{matrixineqlem}
|\tr(MXY)|\leq \beta \tr(MXX^T)+\frac{1}{4\beta }\tr(MYY^T).
\end{equation}
In addition, if $X$ is symmetric then $\tr(MXMX)= \tr(MMXX)$.
\end{lem}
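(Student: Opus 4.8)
The plan is to reduce the inequality to the scalar Young inequality $\sqrt{pq}\le \beta p+\tfrac{1}{4\beta}q$ (for $p,q\ge 0$, $\beta>0$) by way of the Cauchy--Schwarz inequality for the Frobenius inner product. Since $M$ is diagonal with positive diagonal entries $m_1,\dots,m_n$, I would first introduce its diagonal square root $N=M^{1/2}=\mathrm{diag}(\sqrt{m_1},\dots,\sqrt{m_n})$, so that $N=N^{T}$ and $N^{2}=M$. Using the cyclic invariance of the trace, $\tr(MXY)=\tr(N^{2}XY)=\tr\big((NX)(YN)\big)$, and then the Cauchy--Schwarz bound $|\tr(PQ)|\le\|P\|_{F}\,\|Q\|_{F}$, where $\|P\|_{F}^{2}=\tr(PP^{T})$, gives $|\tr(MXY)|\le\|NX\|_{F}\,\|YN\|_{F}$.

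The next step is to evaluate the two Frobenius norms and to apply Young's inequality. By cyclicity of the trace and $N^{2}=M$ one has $\|NX\|_{F}^{2}=\tr(NXX^{T}N)=\tr(N^{2}XX^{T})=\tr(MXX^{T})$ and, similarly, $\|YN\|_{F}^{2}=\tr(YN^{2}Y^{T})=\tr(YMY^{T})=\tr(MY^{T}Y)$; since the matrices $X$ and $Y$ to which the lemma is applied are symmetric (they are Hessians in the Bernstein argument of Subsection~\ref{subsec:grad}), $Y^{T}Y=YY^{T}$ and this last quantity equals $\tr(MYY^{T})$. Inserting these into the Cauchy--Schwarz bound and then invoking $\sqrt{pq}\le\beta p+\tfrac{1}{4\beta}q$ with $p=\tr(MXX^{T})$ and $q=\tr(MYY^{T})$ produces the stated inequality. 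An equivalent, even more hands-on route, which I would mention as an alternative, is to expand $\tr(MXY)=\sum_{i,j}m_iX_{ij}Y_{ji}$ using that $M$ is diagonal, apply $|ab|\le\beta a^{2}+\tfrac{1}{4\beta}b^{2}$ to each summand with $a=\sqrt{m_i}\,X_{ij}$ and $b=\sqrt{m_i}\,Y_{ji}$, and sum; the two resulting sums are recognized as $\tr(MXX^{T})$ and $\tr(MY^{T}Y)$ exactly as above.

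For the second assertion I would argue by a direct coordinate computation. Since $M$ is diagonal we have $(MX)_{ij}=m_iX_{ij}$, and with $X=X^{T}$ we have $XX^{T}=X^{2}$; hence, using cyclicity of the trace and $M=M^{T}$, $\tr\big((MX)(MX)^{T}\big)=\tr(MXX^{T}M)=\tr(M^{2}XX^{T})=\tr(M^{2}X^{2})=\tr(MMXX)$, which is the form of the identity actually needed in the gradient estimate (note that $(MX)^{T}=MX$ only when $M$ and $X$ commute, so the second displayed identity of the lemma should be read with a transpose on one factor). I do not expect any genuine obstacle: the whole lemma is a matter of careful index and transpose bookkeeping, the only delicate point being to keep $YY^{T}$ distinct from $Y^{T}Y$ and $(MX)(MX)^{T}$ distinct from $(MX)(MX)$, which is precisely why it is worth recording that only symmetric matrices occur wherever the lemma is invoked.
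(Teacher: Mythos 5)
The paper itself offers no proof of Lemma \ref{matrixlem} (it is declared elementary and omitted), so there is nothing to compare line by line; your argument --- Cauchy--Schwarz for the Frobenius inner product after factoring $M=N^{2}$ with $N=M^{1/2}$ diagonal, followed by Young's inequality, or equivalently the direct expansion $\tr(MXY)=\sum_{i,j}m_iX_{ij}Y_{ji}$ together with $|ab|\le \beta a^{2}+\frac{1}{4\beta}b^{2}$ --- is correct and is the natural way to prove such a bound.

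Your two caveats are genuine, not pedantry. The computation produces $\tr(MY^{T}Y)$, not $\tr(MYY^{T})$, and these differ when $Y$ is not symmetric and $M$ is not a multiple of the identity: with $M=\mathrm{diag}(m_1,m_2)$, $X$ having only $X_{12}=1$ and $Y$ having only $Y_{21}=1$, one gets $\tr(MXY)=m_1$, $\tr(MXX^{T})=m_1$, $\tr(MYY^{T})=m_2$, so (\ref{matrixineqlem}) with $\beta=\tfrac12$ fails when $m_1>m_2$; the inequality is correct with $\tr(MY^{T}Y)$ on the right, or under a symmetry hypothesis on $Y$. Similarly, for symmetric $X$ one has $\tr(MXMX)=\sum_{i,j}m_im_jX_{ij}^{2}$ while $\tr(MMXX)=\sum_{i,j}m_i^{2}X_{ij}^{2}$, so the second identity should indeed be read as $\tr\bigl((MX)(MX)^{T}\bigr)=\tr(MMXX)$, exactly as you propose. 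Neither correction disturbs the paper: in the proof of Proposition \ref{prop:gradbd} the lemma is applied either with $Y=\psi D^{2}u$ symmetric (so $Y^{T}Y=YY^{T}$) or with $M=\mathrm{Id}$ (where the two traces coincide), and the identity is used in the transposed form. One small slip in your write-up: the matrices $X$ occurring in the applications, namely $Du\otimes D\psi$ and $W$, are not symmetric Hessians --- but this is harmless, since $X$ enters only through $\tr(MXX^{T})$ and the symmetry discussion is needed only for the $Y$ factor.
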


%\begin{proof}Let us fix $M$ and define $\langle \cdot,\cdot \rangle$ by $\langle X,Y\rangle = \tr(M X Y^T)$. We have that $\langle \cdot,\cdot \rangle$  is an inner product on the space of $n\times n$ matrices. Hence, $\langle \cdot,\cdot \rangle$  satisfies the Cauchy Schwarz inequality. In other words, we have  
%$\langle X,Y\rangle^2\leq \langle X,X\rangle \langle Y,Y\rangle$ for all $n\times n$ matrices $X$ and $Y$. 
%Taking square root and using Cauchy Schwarz for real numbers gives, 
%\[
%|\langle X,Y\rangle|\leq\sqrt{ \langle X,X\rangle}\sqrt{ \langle Y,Y\rangle}\leq \beta \langle X,X\rangle + \frac{\langle Y,Y\rangle}{4\beta}.
%\]
%Using the definition of $\langle,\rangle$ gives that (\ref{matrixineqlem}) holds.

%For any three \emph{symmetric} matrices $A$, $B$, $C$, we have $\tr(ABC)=\tr(BAC)$. We apply this with $A=MX$, $B=M$ and $C=X$ and obtain the second claim of the lemma.
%\end{proof}

\begin{proof}[Proof of Proposition \ref{prop:gradbd}]
Let  $0\leq\psi(y_1,t)\leq 1$ be a cutoff function supported on $Q'$ and identically $1$ inside $Q$. Let us define $\zeta(y,t) = \psi^2(y_1,t)$ and 
\[
\tG(p,y,t)=\zeta(y,t)G(p,y).
\]
 We record for future use the following facts:
\begin{equation}
\label{HpDu}
G_p(Du, y)\cdot Du=2G(Du,y)
\end{equation}
\begin{equation}
\label{tHpp}
\tilde{G}_{pp}(Du,y)
= \frac{2\zeta}{\ep}A(y), \text{ and }\,  \,   
 \tilde{G}_{py}(Du,y)
=\frac{2\psi}{\ep}A(y)\cdot Du\otimes D\psi +2 \zeta W,
\end{equation}
where we denote $W=\left(\begin{matrix}
0 &u_{y_1}\\
0&0
\end{matrix}\right)$. Throughout the remainder of the argument we will use $C$ to denote constants that may depend on $\alpha$, $r$, $\linfty{u}{Q'}$, $\theta_m$, $\theta_M$, $Q$ and $M$ and that may change from line to line. We also define the constant $\lambda$ by
\[
\lambda=16\max\left\{ \linfty{\zeta_t}{Q'}, \frac{8}{\theta_m}, 16\theta_M ||\psi_{y_1}||_\infty, \frac{8\alpha||\psi_{y_1}||_\infty}{\ep}, \frac{||D^2\zeta||_{\infty}\alpha}{\ep}, 2\theta_M\sup_{Q'}(\zeta u_{y_1}), \frac{2M}{\ep}, \frac{2M\theta_m}{\ep}, \frac{1}{2\alpha\ep} \right\}.
\]
We remark that $\lambda$ is of the form,
\[
\lambda =  \left\{\frac{C}{\ep}, C \sup_{Q'}(\zeta u_{y_1})\right\}.
\]
We define the function
\begin{equation}
\label{defz}
z(y,t)=\tilde{G}(Du,y,t)+\lambda u(y,t)
\end{equation}
and we claim 
\begin{equation}
\label{locmaxz}
\text{if $(y_0,t_0)$ is the maximum of $z$ on $Q'\times \Theta$ and $(y_0,t_0)\notin (\partial Q')\times \Theta$, then } 
G(Du(y_0,t_0),y_0)\leq C.
\end{equation}
\textbf{Why (\ref{locmaxz}) implies the  estimates (\ref{eq:lemuy2}) and (\ref{eq:lemuy1}).} Let us assume (\ref{locmaxz}) holds, and we will establish (\ref{eq:lemuy2}). 

\textbf{First step:} We will first prove the following upper bound on $\tG$ holds:
\begin{equation}
\label{tHatmax}
\tG(Du,y,t)\leq C+\lambda C \text{ for all }(y,t)\in Q'\times \Theta.
\end{equation}
To this end, we use   (\ref{defz}) and the fact that $(y_0,t_0)$ is the maximum of $z$ to obtain,
\begin{equation}
\label{tHzy}
 \tG(Du,y,t)= z(y,t)-\lambda u(y,t) \leq z(y_0,t_0)-\lambda u(y,t) \text{ for all }(y,t)\in Q'\times \Theta.
\end{equation}
Next we consider two cases:  the first is $(y_0,t_0)\in (\partial Q')\times \Theta$, and the second is  $(y_0,t_0)\notin (\partial Q')\times \Theta$.  In the first case, we have $\zeta(y_0,t_0)=0$, so we find $z(y_0,t_0)=\lambda u(y_0,t_0)$. Therefore, the estimate (\ref{tHzy}) says,
\[
\tG(Du,y,t) \leq \lambda u(y_0,t_0)-\lambda u(y,t) \text{ for all }(y,t)\in Q'\times \Theta.
\] 
Since we have $\linfty{u}{Q'}\leq C$, this establishes the estimate (\ref{tHatmax}) in the case  $(y_0,t_0)\in (\partial Q')\times \Theta$. 

Now let us suppose  $(y_0,t_0)\notin (\partial Q')\times \Theta$, 
so that, according to (\ref{locmaxz}), we have 
\begin{equation}
\label{Hy0t0}
G(Du(y_0,t_0),y_0)\leq C.
\end{equation}
We now  use  (\ref{defz}) to express the right-hand side of (\ref{tHzy}) in terms of $\tG$ and $u$:
\[
\tG(Du,y,t) \leq  \tG(Du,y_0,t_0) + \lambda u(y_0,t_0)-\lambda u(y,t).
\]
We use that $\tG = \zeta G \leq G$ and the estimate (\ref{Hy0t0}) to bound the first term on the right-hand side of the previous line of the above by $C$,  and that $\linfty{u}{Q'\times\Theta}\leq C$ to bound the last two terms. This yields  (\ref{tHatmax}).

\textbf{Second step:} Next we will use (\ref{tHatmax}) to verify the following upper bound on the derivatives of $u$ in terms of $C$ and $\lambda$:
\begin{equation}
\label{eq:lemalmost}
\theta_mu_1^2(y,t)+\frac{\alpha}{\ep^2} u_{y_2}^2(y,t)\leq C+\lambda C \text{ for all } (y,t)\in Q\times \Theta.
\end{equation}
To this end, we use  the definition of $\tG$ and the fact that $\zeta$ is identically $1$ inside $Q$ to obtain,
\[
\theta_mu_1^2(y,t)+\frac{\alpha}{\ep^2} u_{y_2}^2(y,t) \leq y_2u_1^2(y,t)+\frac{\alpha}{\ep^2} u_{y_2}^2(y,t) = \tG(Du, y, t) \text{ for all }(y,t)\in Q\times \Theta.
\]
We use  the estimate (\ref{tHatmax}) to bound the first term on the right-hand side of the previous line from above by $C+\lambda C$ and obtain the estimate (\ref{eq:lemalmost}).

\textbf{Third step:} There are two possible values for $\lambda$, and we will show that, together with the estimate (\ref{eq:lemalmost}) that we just established, either one implies the desired estimate (\ref{eq:lemuy2}). Let us first suppose $\lambda=C\ep^{-1}$. Upon substituting this on the right-hand side of (\ref{eq:lemalmost}) we find,
\[
\theta_mu_1^2(y,t)+\frac{\alpha}{\ep^2} u_{y_2}^2(y,t) \leq C+\frac{C}{\ep},
\]
which yields the desired estimates (\ref{eq:lemuy2}) and (\ref{eq:lemuy1}) by multiplying both sides by $\ep^2$.

Now let us suppose that $\lambda$ takes on the other possible value, so that $\lambda = C \left( \sup_{Q'}(\zeta u_{y_1})\right)$. We use that $\theta_m\leq y_2$, the definition of $\tG$, the line (\ref{tHatmax}) and the value of $\lambda$ to obtain, for any $(y,t)\in Q'\times \Theta$,
\[
\theta_m\zeta(y,t) u_{y_1}^2(y,t)\leq \zeta(y,t) y_2 u_{y_1}^2(y,t)\leq \tG(Du,y,t)\leq C+\frac{C}{\lambda} = C+ C\left( \sup_{Q'}(\zeta u_{y_1})\right) .
\]
From this we deduce $\sup_{Q'}(|\zeta u_{y_1}|)\leq C$ and hence we have $\lambda\leq C$ (and, since $\zeta \equiv 1$ on $Q$, (\ref{eq:lemuy1}) holds). Using this on the right-hand side of (\ref{eq:lemalmost}) implies that (\ref{eq:lemuy2}) holds.

\textbf{The proof of (\ref{locmaxz}).}
Let us suppose $(y_0,t_0)$ is the maximum of $z$ on $Q'\times \Theta$ and $(y_0,t_0)\notin (\partial Q')\times \Theta$. There are two cases to consider: the first is that $(y_0,t_0)$ is in the  interior of $Q'\times\Theta$,  and the second is that $(y_0,t_0)\in Q'\times \partial \Theta$. Let us tackle the first case, so that we have,
\begin{equation}
\label{atmaxz}
0\leq z_t(y_0,t_0)-\tr (AD^2_{yy}z(y_0,t_0)), \text{ and } D_y z(y_0,t_0)=0.
\end{equation}

%(If $(y_0,t_0)$ is in the  interior of $Q'\times\Theta$, then of course (\ref{atmaxz}) holds. Otherwise, we have $(y_0,t_0)\in Q'\times \partial \Theta$, and the Neumann  conditions for $u$ on $\bdry \Theta$ imply that (\ref{atmaxz}) holds.) 
We seek to establish
\begin{equation}
\label{wantH}
G(Du(y_0,t_0),y_0)\leq C.
\end{equation}
As we have just shown, once we establish (\ref{wantH}) the proof of the proposition will be complete. To this end, we compute,
\begin{equation}
\label{eqforz}
z_t-\tr(AD^2_{yy} z) = \lambda (u_t - \tr(AD^2u))+\zeta G_p(Du, y)\cdot (Du_t -\tr(AD^3u))+\text{I}
\end{equation}
where $\text{I}$ is the sum of the left-over terms from $z_t$ and $-\tr(AD^2z)$:
\[
\text{I} = \tG_t(Du,y,t) - \tr\left[A\left(\tG_{pp}(Du,y,t) D^2u D^2u +2\tG_{py}(Du,y,t) D^2u +\tG_{yy}(Du,y,t)\right)\right].
\]
(Throughout, we use $D$ to denote the derivative in $y$.) 
We  use that $u$ satisfies (\ref{eqnu}) to write the first term on the right-hand side of (\ref{eqforz}) as
\begin{equation}
\label{firstterm}
\lambda (u_t - \tr(AD^2u)) = \lambda (G(Du,y)+f(y))\leq \lambda (G(Du,y)+r),
\end{equation}
where we have used (\ref{linff}) to obtain the inequality. 
Now let us look at the second term on the right-hand side of (\ref{eqforz}). We recognize that $Du_t -\tr(AD^3u)$ is ``almost" the derivative of $u_t -\tr(AD^2u)$, up to a term that involves a derivative of $A$. In addition, rearranging the equation that $u$ solves implies $u_t -\tr(AD^2u)= G(Du,y)+f(y)$. We find:
\[
Du_t -\tr(AD^3u) =D(u_t -\tr(AD^2u)) +  \tr(DAD^2u) = D(G(Du,y)) +Df(y)+\tr(DAD^2u) .
\]
Multiplying by $\zeta$ we obtain,
\begin{equation}
\label{secondterm}
\zeta (Du_t -\tr(AD^3u)) = \zeta D(G(Du,y)) +\zeta Df(y)+\zeta  \tr(DAD^2u) .
\end{equation}
So far, our computations hold on all of $Q'\times \Theta$. Now we will specialize to $(y_0,t_0)$ and obtain an alternate expression for the first term on the right-hand side of the previous line. We  recall the derivative of $z$ is zero at $(y_0,t_0)$, so that, 
\begin{equation}
\label{Dz0}
0=Dz(y_0,t_0)= D(\tG(Du,y_0))+\lambda Du= G(Du, y_0)D\zeta  +\zeta D(G(Du,y_0)) +\lambda Du,
\end{equation}
which, upon rearranging becomes,
\[
\zeta D(G(Du,y_0)) =- \lambda Du-  G(Du, y_0)D\zeta .
\]
We  substitute the right-hand side of the previous line for the first term of the right-hand side of (\ref{secondterm}) to obtain, at $(y_0,t_0)$,
\begin{equation*}
\zeta (Du_t -\tr(AD^3u) )=- \lambda Du- G(Du, y_0)D\zeta + \zeta Df(y_0)+\zeta \tr(DAD^2u).
\end{equation*}
We take dot product with $G_p(y_0,t_0)$ and use the previous line to find: 
\begin{equation}
\label{secondtermuse}
\zeta G_p(Du, y_0)\cdot (Du_t -\tr(AD^3u)) = -\lambda G_p(Du, y_0)\cdot Du + \text{II}
\end{equation}
where $\text{II}$ is the sum of the left-over terms:
\[
\text{II} = G_p(Du,y_0)\cdot (- G(Du, y_0) D\zeta  + \zeta Df(y_0)+ \zeta \tr(DAD^2u)).
\]
Next, according to (\ref{HpDu}), we have  $G_p(Du, y)\cdot Du= 2G(Du, y)$. We use this on the right-hand side of (\ref{secondtermuse}) and find,
\begin{equation}
\label{secondtermagain}
\zeta G_p(Du, y_0)\cdot (Du_t -\tr(AD^3u)) = -2\lambda G(Du,y_0) + \text{II}.
\end{equation}
Let us now consider (\ref{eqforz}) evaluated at $(y_0,t_0)$. According to (\ref{atmaxz}),  the left-hand side of  (\ref{eqforz}) is non-negative. We   use (\ref{firstterm}) to estimate the first term on the right-hand side of (\ref{eqforz}), and we use (\ref{secondtermagain}) for the second term, and find,
\begin{equation}
\label{HandDpH}
0\leq 
\lambda (G(Du,y_0)+r- 2G(Du,y_0))+ \text{II}+\text{I} = -\lambda G(Du,y_0) +r\lambda + \text{II} +\text{I}.
\end{equation}
We now claim that the sum of the leftover terms $\text{I}$ and $\text{II}$ is bounded:
\begin{equation}
\label{esterror}
\text{II}+\text{I}\leq C\lambda+ \frac{\lambda}{2} G(Du,y) .
\end{equation}
We point out that once the bound (\ref{esterror}) is established, the proof of (\ref{wantH}), and hence of the proposition, will be complete. Indeed, using (\ref{esterror}) to estimate the right-hand side of (\ref{HandDpH}) yields 
\[
0\leq - \frac{\lambda}{2} G(Du(y_0,t_0), y_0)+C\lambda,
\]
which, upon rearranging and dividing by $\lambda>0$ yields (\ref{wantH}).

\textbf{Proof of bound (\ref{esterror}) on error terms.} 
 Let us start with $\text{I}$. We will prove,
\begin{equation}
\label{estonstar}
 \text{I}\leq - \frac{\zeta}{\ep}\tr\left[A^2 \cdot D^2u\cdot D^2u \right] +\frac{\lambda}{4} G(Du,y).
\end{equation}
We use the  expressions (\ref{tHpp}) for $\tG_{pp}$ and $\tG_{py}$ to  rewrite $\text{I}$ as
\begin{equation}
\label{traceinstar}
\begin{split}
\text{I} =\tG_t(Du,y) - \frac{2\zeta}{\ep} \tr\left[A^2 D^2uD^2u\right] - \frac{4\psi}{\ep}\tr\left[A^2\cdot Du\otimes D\psi  D^2u \right] 
-4\zeta\tr\left[ A W D^2u \right] -\tr\left[A\tG_{yy}\right].
\end{split}
\end{equation}
Let us bound from above the third term in $\text{I}$ by applying the  inequality of Lemma \ref{matrixlem} with $M=A^2$, $X=Du\otimes D\psi$, $Y=\psi D^2u$ and $\beta=2$. We obtain: 
\begin{equation}
\label{estwithD2u}
- \frac{4\psi}{\ep}\tr\left[A^2 Du\otimes D\psi  D^2u \right]\leq \frac{8}{\ep} \tr(A^2 (Du\otimes D\psi)(Du\otimes D\psi)^T)+\frac{\psi^2}{2\ep}\tr(A^2 D^2u D^2u).
\end{equation}
Let us apply Lemma \ref{matrixlem}  again, this time with $M=Id$, $X=W$, $Y=AD^2u$ and $\beta=2$. We obtain the following upper bound for the fourth term in $\text{I}$:
\[
- \zeta \tr\left[A W D^2u \right]\leq 8 \zeta  \tr\left(WW^T\right)+\frac{\zeta}{2}\tr(A^2 D^2u D^2u) = 8\zeta u_1^2  +\frac{\zeta}{2}\tr(A^2 D^2u D^2u),
\]
where we have also used the second statement in Lemma \ref{matrixlem}. We use  (\ref{estwithD2u}) and the previous line to bound from above the third and fourth terms, respectively, in on the right-hand side of (\ref{traceinstar}). Notice that the terms involving $A^2 D^2u D^2u$ in the previous line and in (\ref{estwithD2u}) will be ``absorbed" by the second term in $\text{I}$ (we are using that $\psi^2=\zeta$). We obtain:
\begin{equation}
\label{bdforRHStracealmost}
\text{I}\leq \tG_t(Du,y) - \frac{\zeta}{\ep}\tr\left[A^2  D^2u D^2u \right] +\frac{8}{\ep}\tr(A^2 (Du\otimes D\psi)(Du\otimes D\psi)^T) + 8\zeta u_{y_1}^2 -\tr\left[A\tG_{yy}(Du,y)\right].
\end{equation}
The first term on the right-hand side of (\ref{bdforRHStracealmost}) is simply $\zeta_t G(Du,y)$, which is less than $\frac{\lambda}{16} G(Du,y)$. In addition, we have $8\zeta u_{y_1}^2\leq 8 \frac{\theta}{\theta_m}u_{y_1}^2\leq \frac{\lambda}{16}G(Du,y)$. We use this to bound the right-hand side of (\ref{bdforRHStracealmost}) from above and find,
\begin{equation}
\label{bdforRHStrace}
\text{I}\leq  \frac{\lambda}{8}G(Du,y)- \frac{\zeta}{\ep}\tr\left[A^2  D^2u D^2u \right] +\frac{8}{\ep}\tr(A^2 (Du\otimes D\psi)^2) -\tr\left[A\tG_{yy}(Du,y)\right].
\end{equation}
We will show now show that each of the last two terms on the right-hand side of the previous line is less than $\frac{\lambda}{16} G(Du, y)$. Once we show this, the estimate (\ref{estonstar}) will be established. To this end, we use that $\psi$ is independent of $y_2$, to compute
\[
\tr(A^2 (Du\otimes D\psi)(Du\otimes D\psi)^T) = y_2^2 \ep^2 u^2_{y_1}\psi^2_{y_1} +\frac{\alpha^2}{\ep^2} u_{y_2}\psi_{y_1}^2. 
\]
Multiplying by $8/\ep$ and using the definitions of $G$ and $\lambda$ we obtain,
\begin{equation*}
\frac{8}{\ep}\tr(A^2 (Du\otimes D\psi)(Du\otimes D\psi)^T)
\leq 8\theta_M ||\psi_{y_1}||_\infty y_2 u_{y_1}^2+ \frac{8\alpha||\psi_{y_1}||_\infty}{\ep} \frac{\alpha u_{y_2}^2 }{\ep^2}
\leq \frac{\lambda}{16}G(Du, y).
\end{equation*}
Let us estimate  the last term of (\ref{bdforRHStrace}). We compute $\tG_{yy}$ in terms of the derivatives of $G$ and $\zeta$ and find, 
\[
\tG_{yy}=G_y\otimes D\zeta +D\zeta \otimes G_y+ G D^2\zeta = 
\left(\begin{matrix}
0&u_{y_1}^2\zeta_{y_1}\\
u_{y_1}^2\zeta_{y_1}&0
\end{matrix}\right) 
+ G \left(\begin{matrix}
0&0\\
0&\zeta_{y_2 y_2}
\end{matrix}\right)
\]
We multiply both sides of the previous line by $-A(y)$ and take trace. The first term gives zero. The second is simply $-G(Du,y)\zeta_{y_2 y_2} \frac{\alpha}{\ep}$. We summarize this as,
\begin{align*}
-\tr(A\tilde{G}_{yy}(Du,y))=-G(Du,y)\zeta_{y_2 y_2} \frac{\alpha}{\ep} \leq \frac{||D^2\zeta||_\infty \alpha}{\ep} G(Du,y)
& \leq \frac{\lambda}{16}G(Du, y).
\end{align*}
Thus we have proved (\ref{estonstar}). Now for $\text{II}$. We have
 \[
\text{II} = -G_p(Du,y)\cdot  D\zeta  G(Du, y)  + \zeta G_p(Du,y)\cdot Df(y)+ \zeta G_p(Du,y)\cdot  \tr(DAD^2u)).
\]
Since  since $D\zeta=(\zeta_{y_1},0)$, the first term in $\text{II}$ is simply
\[
- 2y_2 \zeta u_{y_1} G(Du, y) \leq 2\theta_M \left( \sup_{Q'}(\zeta u_{y_1})\right)G(Du, y)\leq \frac{\lambda}{16}G(Du,y).
\]
For the second term in $\text{II}$, we use that $Df=(f_{y_1},0)$,  the Cauchy-Schwarz inequality, the assumption (\ref{fyasump}), and the definitions of $G$ and $\lambda$ to find,
\[
\zeta G_p(Du,y)\cdot Df(y) = \zeta 2y_2  u_{y_1} f_{y_1} \leq 2y_2 \zeta (u_{y_1}^2+1) \linfty{f_{y_1}}{Q'}
\leq \frac{2M}{\ep}(G(Du,y)+\theta_M)  \leq \frac{\lambda}{16}G(Du,y)+ \frac{\lambda}{16},
\]
Finally we will bound the last term in $\text{II}$. We have $D_{y_1}A \equiv 0$, so the last term is simply
\begin{align*}
\zeta G_{p_2}(Du, y) \tr(D_{y_2}A D^2u) &= \zeta\frac{ u_{y_2}}{\ep^2}  y_2 \ep u_{y_1,y_1} 
\leq \frac{\zeta}{\ep}\left( \frac{u_{y_2}^2}{2\ep}+  \frac{\ep^2y_2^2u_{y_1 y_1}^2}{2}\right)\\
&\leq \frac{\zeta}{\ep}\left(\frac{1}{2\alpha} G(Du,y) +\frac{1}{2}\tr(A^2D^2uD^2u)\right)\\
&\leq \frac{\lambda}{16} G(D^2u,y) + \frac{\zeta}{2\ep} \tr(A^2D^2uD^2u)
\end{align*}
where the first inequality follows by applying Cauchy Schwarz, the second from the definitions of $A$ and $G$, and the third from our choice of $\lambda$. We therefore find,
\[
\text{II}\leq \frac{3\lambda}{16}G(Du,y) +\frac{\lambda}{16}+ \frac{\zeta}{2\ep} \tr(A^2D^2uD^2u).
\]
Adding our upper bound (\ref{estonstar}) on $\text{I}$ to the previous line we obtain,
\[
\text{I}+\text{II}\leq \frac{\lambda}{2}G(Du,y) +\frac{\lambda}{16} -  \frac{\zeta}{2\ep} \tr(A^2D^2uD^2u) \leq  \frac{\lambda}{2}G(Du,y) +\frac{\lambda}{16} ,
\]
as desired. Thus our analysis of the case that $(y_0,t_0)$ is in the  interior of $Q'\times\Theta$ is complete. 

\textit{Completing the proof of (\ref{locmaxz}): analysis of the second case.} We now tackle the second case, and suppose  $(y_0,t_0)\in Q'\times \partial \Theta$. We do not have that (\ref{atmaxz}) holds; let us first determine the ``replacement" for  (\ref{atmaxz}) in this case. Since the $y_1$ and $t$ coordinates of the maximum $(y_0,t_0)$ of $z$ are interior, we have $z_t=0$, $z_{y_1}=0$, and $z_{y_1 y_1}\leq 0$ at $(y_0,t_0)$.  We now need to consider $z_{y_2}$ and $z_{y_2 y_2}$. To this end, we use the definitions of $z$ and $G$ to compute,
\[
z_{y_2}=\zeta (u_1^2+2y_2u_1u_{12}+2\frac{\alpha}{\ep}u_{22}u_2)+ \lambda u_2.
\]
Since $u_2\equiv 0$ in $\bdry \Theta$, we have $u_{12}\equiv 0$ on $\bdry \Theta$ as well.  Hence,  on $Q'\times \partial \Theta$, we find
\[
z_{y_2}(y,t) = \zeta u_1^2.
\]
Taking another derivative of $z$ in  $y_2$ and using that $u_{12}\equiv 0$ on $\bdry \Theta$, we obtain
\[
z_{y_2 y_2}\equiv 0 \text{ on }\bdry \Theta.
\]
Putting everything together, we find that instead of  (\ref{atmaxz}) we have,
\begin{equation}
\label{atmaxzsecond}
0\leq z_t(y_0,t_0)-\tr (AD^2_{yy}z(y_0,t_0)), \text{ and } D z(y_0,t_0)=(0,\zeta u_1^2).
\end{equation}
We recall that we used (\ref{atmaxz}) in two places. The first was to find that the left-hand side of  (\ref{eqforz}) is non-negative at $(y_0,t_0)$, which allowed us to obtain the lower bound of $0$ for the left-hand side of (\ref{HandDpH}). We see that, since (\ref{atmaxzsecond}) holds, we still have this lower bound. The second place where we used (\ref{atmaxz}) was line (\ref{Dz0}), to say  $Dz(y_0,t_0)=0$. Now, we instead have $D z(y_0,t_0)=(0,\zeta u_1^2)$.  Let us proceed as in the proof of the first case (we will see that, because we will soon take dot product with a vector of the form $(\cdot, 0)$, this extra term will not affect the argument). Line (\ref{Dz0}) no longer has $0$ on the left-hand side but becomes,
\[
(0,\zeta u_1^2)=Dz(y_0,t_0)= D(\tG(Du,y_0))+\lambda Du= G(Du, y_0)D\zeta  +\zeta D(G(Du,y_0)) +\lambda Du.
\]
We  rearrange and find,
\[
\zeta D(G(Du,y_0)) =- \lambda Du-  G(Du, y_0)D\zeta +(0,\zeta u_1^2).
\]
The next step is to substitute the right-hand side of the previous line for the first term of the right-hand side of (\ref{secondterm}). Doing this we obtain, at $(y_0,t_0)$,
\begin{equation*}
\zeta (Du_t -\tr(AD^3u) )=- \lambda Du- G(Du, y_0)D\zeta +(0,\zeta u_1^2) + \zeta Df(y_0)+\zeta \tr(DAD^2u).
\end{equation*}
We take dot product with $G_p(y_0,t_0)$. However, the key here is that $G_p(y_0,t_0)$ is simpler than before: we use the definition of $G$ to compute,
\[
G_p(y_0,t_0)= (2y_2u_1,2\frac{\alpha}{\ep^2}u_2)=(2y_2u_1, 0),
\]
where the second inequality follows since $(y_0, t_0)\in Q'\times \bdry\Theta$, and $u_2\equiv 0$ on $\bdry \Theta$. Hence, the term $(0,\zeta u_1^2)$ disappears upon taking dot product with $G_p(y_0,t_0)$, and therefore (\ref{secondtermuse}) remains unchanged. Thus the remainder of the argument goes through as in the first case, and the proof of Proposition \ref{prop:gradbd} is complete.

\end{proof}

\section{Limits of the $u^\ep$ as $\ep$ approaches zero}
\label{sec:limitsuep}
This section is devoted to studying the half-relaxed limits $\bar{u}$ and $\underline{u}$ of the $u^\ep$ that we mentioned in the introduction. For the convenience of the reader, we recall their definitions here:
\[
\bar{u}(x,t)=\lim_{\ep\rightarrow 0} \sup \{u^{\ep'}(y,\theta,s):\,  \ep'\leq \ep,  |y-x|,  |t-s|\leq \ep, \theta\in \Theta \}
\]
and
\[
\underline{u}(x,t)=\lim_{\ep\rightarrow 0}\inf\{u^{\ep'}(y,\theta,s):\, \ep'\leq \ep, |y-x|,  |t-s|\leq \ep, \theta\in \Theta\}.
\]
We also summarize the results of the previous sections. We have established Corollary \ref{supbd:cor}, which says that there exists a constant $C$ so that $\linfty{n^\ep}{\rr\times\Theta\times [0,\infty)}\leq C$. We have also proved Proposition \ref{bd on utheta}, which says that if $Q\subset \subset \rr\times  (0,\infty)$, then there exists a constant $C$ that depends on $Q$ such that for  $(x,t)\in Q$ and $\theta\in \Theta$, we have
\[
 u^\ep(x,\theta,t)\geq -C
\]
and
\[
|u^\ep_\theta (x,\theta, t)|\leq \ep^{1/2} C.
\]
In particular, these results imply that $\bar{u}$ and $\underline{u}$ are finite everywhere on $\rr\times (0,\infty)$ (although they may be infinite at time $t=0$, as we will demonstrate in the proof of Proposition \ref{main result}). In addition, we obtain
that $\bar{u}$ and $\underline{u}$ are non-positive: indeed, since $\linfty{n^\ep}{\rr\times\Theta\times [0,\infty)}\leq C$, we have $u^\ep(x,\theta,t) \leq \ep \ln C$ for any $(x,\theta, t)\in \rr\times [0,\infty)\times \Theta$. Taking $\liminf$ or $\limsup$ as $\ep\rightarrow 0$ implies,
\begin{equation}
\label{sign}
\bar{u}(x,t)\leq 0 \text{ and } \underline{u}(x,t)\leq 0 \text{ for all }(x,t)\in \rr\times [0,\infty).
\end{equation}

For the reader's convenience, we recall what it means for $u$ to be a sub- or super- viscosity solution of the constrained Hamilton-Jacobi equation (\ref{HJ}). 
\begin{defn}
We say $u$ is a sub- (resp. super-) viscosity solution of 
\[
\max\{u_t - H(Du), u\} =0 \text{ on }\rr\times (0,\infty)
\]
if, for any point $(x,t)$ and any smooth test function $\phi$ such that $u-\phi$ has a local maximum (resp. minimum) at $(x,t)$, we have 
\[
u(x,t) \leq 0 \text{ and } \phi_t(x,t)-H(D\phi) \leq 0
\]
\[
(\text{resp. }  u(x,t) \geq 0 \text{ or } \phi_t(x,t)-H(D\phi) \geq 0).
\]
\end{defn}

In subsection \ref{sec:pf main prop} we prove that $\bar{u}$ and $\underline{u}$ are, respectively, a sub- and super- viscosity solution of the Hamilton-Jacobi equation (\ref{HJ}). This is part Part (I) of Proposition \ref{main result}. We use  a perturbed test function  argument \cite{EvansPTF} and some techniques similar to the proofs of \cite[Theorem 1.1]{EvansSouganidis}, \cite[Propositions 3.1 and 3.2]{BarlesEvansSouganidis}, and \cite[Proposition 1]{BouinMirrahimi}. We will perturb our test functions by $\ep \ln Q(\theta)$, where  $Q$ satisfies the spectral  problem (\ref{spectral}) for an appropriate $\lambda$. From first glance this is slightly different from the perturbations in, say, \cite{EvansPTF}, due to the presence of  $\ln$; however, this is natural seeing as we have defined $u^\ep=\ep \ln n^\ep$.

In subsection \ref{sec:at time 0} we study the behavior of $\bar{u}$ and $\underline{u}$ at $t=0$ and establish Part (II) of Proposition \ref{main result}. We  follow the strategy of \cite[Propositions 3.1, 3.2]{BarlesEvansSouganidis}.

\subsection{Proof of Part (I) of Proposition \ref{main result}}
\label{sec:pf main prop}
Throughout this section we employ the notational convention we mentioned in Section \ref{notation}: 
if $Q\subset\rr\times (0,\infty)$, then we will use  $Q\times \Theta$ to denote,
\[
Q\times \Theta = \{(x,\theta,t):\  \ (x,t)\in Q \text{ and } \theta\in \Theta\}.
\]

\subsubsection{An auxillary lemma}
We formulate the following lemma, which is similar  to \cite[Lemma 6.1]{User's guide}.
\begin{lem}
\label{lem:u*}
Let $v(x,t)$ and $Q(\theta)$ be smooth functions. Suppose $\underline{u}-v$  (resp. $\bar{u}-v$) has a local minimum (resp. maximum) at $(x_0,t_0)$. Define $v^\ep(x,\theta,t)= v(x,t) + \ep Q(\theta)$. Then there exists a subsequence $\{\ep_n\}_{n=1}^{\infty}$ such that 
\begin{enumerate}
\item \label{itemlimit} $(x_{\ep_n},t_{\ep_n}) \rightarrow (x_0,t_0)$,
\item \label{itemmin} $u^{\ep_n} - v^{\ep_n}$ has a local minimum (resp. maximum) at $(x_{\ep_n},\theta_{\ep_n},t_{\ep_n})$ for some $\theta_{\ep_n}$, and
\item \label{itemu} $\lim_{n\rightarrow \infty} u^{\ep_n} (x_{\ep_n},\theta_{\ep_n},t_{\ep_n}) = \underline{u}(x_0,t_0)$ (resp. $=\bar{u}(x_0,t_0)$).
\end{enumerate}
\end{lem}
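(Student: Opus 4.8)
## Proof plan

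The plan is to adapt the standard perturbed-test-function bookkeeping lemma (such as \cite[Lemma 6.1]{User's guide}) to our setting, where the extra feature is the $\theta$-variable that disappears in the limit. I will treat the supersolution case (minimum of $\underline u - v$); the subsolution case is symmetric, replacing $\inf$ by $\sup$ and minima by maxima throughout. Without loss of generality I may assume the local minimum at $(x_0,t_0)$ is strict, say on a closed cylinder $\bar B = \bar B_\delta(x_0)\times[t_0-\delta,t_0+\delta] \subset\subset \rr\times(0,\infty)$, by the usual trick of subtracting $|x-x_0|^4 + |t-t_0|^4$ from $v$; this modification changes neither the hypothesis nor the first- and second-order derivatives of $v$ at $(x_0,t_0)$, so it is harmless for the intended application.

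First I would introduce $v^\ep(x,\theta,t) = v(x,t) + \ep Q(\theta)$ and, for each $\ep$, choose $(x_\ep,\theta_\ep,t_\ep)$ to be a minimum point of $u^\ep - v^\ep$ over the compact set $\bar B\times\bar\Theta$. Such a point exists by continuity of $u^\ep$ (each $n^\ep$ is a classical, in particular continuous, solution, and $u^\ep = \ep\ln n^\ep$ is continuous wherever $n^\ep>0$; on the relevant compact set Proposition \ref{bd on utheta} gives $u^\ep \geq -C$, so $n^\ep>0$ there and $u^\ep$ is genuinely continuous). Since $\bar B\times\bar\Theta$ is compact, I can extract a subsequence $\{\ep_n\}$ along which $(x_{\ep_n},\theta_{\ep_n},t_{\ep_n}) \to (\bar x,\bar\theta,\bar t)$ for some limit point in $\bar B\times\bar\Theta$. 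The goal is to identify $(\bar x,\bar t)=(x_0,t_0)$, which gives item (\ref{itemlimit}) and, since the minimum is then interior to $\bar B$, item (\ref{itemmin}); item (\ref{itemu}) will come out of the same comparison of values.

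The heart of the argument is the following chain of inequalities. By definition of the half-relaxed $\liminf$, for any $(y,s)$ near $(x_0,t_0)$ we have $\underline u(y,s) = \lim_{\ep\to0}\inf\{u^{\ep'}(\cdot):\ \ep'\le\ep,\ |\cdot - (y,\theta,s)|\le\ep\}$, so there is a sequence $(y_n,\theta_n',s_n)\to(x_0,t_0)$ with $\ep_n' \le \ep_n$ and $u^{\ep_n'}(y_n,\theta_n',s_n) \to \underline u(x_0,t_0)$; after relabelling I may as well assume (passing to a further subsequence and, if necessary, replacing the $\ep_n$ by these $\ep_n'$, which only shrinks $\ep_n$) that these are the same $\ep_n$. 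Evaluating the minimum property of $(x_{\ep_n},\theta_{\ep_n},t_{\ep_n})$ against the point $(y_n,\theta_n',s_n)$ gives
\[
u^{\ep_n}(x_{\ep_n},\theta_{\ep_n},t_{\ep_n}) - v^{\ep_n}(x_{\ep_n},\theta_{\ep_n},t_{\ep_n}) \le u^{\ep_n}(y_n,\theta_n',s_n) - v^{\ep_n}(y_n,\theta_n',s_n).
\]
On the right, $u^{\ep_n}(y_n,\theta_n',s_n) \to \underline u(x_0,t_0)$ and $v^{\ep_n}(y_n,\theta_n',s_n)\to v(x_0,t_0)$ (the $\ep Q(\theta)$ term vanishes since $Q$ is bounded on $\bar\Theta$). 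On the left, $v^{\ep_n}(x_{\ep_n},\theta_{\ep_n},t_{\ep_n})\to v(\bar x,\bar t)$, and by the very definition of $\underline u$ as a $\liminf$, $\liminf_n u^{\ep_n}(x_{\ep_n},\theta_{\ep_n},t_{\ep_n}) \ge \underline u(\bar x,\bar t)$. Combining, $\underline u(\bar x,\bar t) - v(\bar x,\bar t) \le \underline u(x_0,t_0) - v(x_0,t_0)$. But $(x_0,t_0)$ is a \emph{strict} minimum of $\underline u - v$ on $\bar B$, and $(\bar x,\bar t)\in\bar B$, which forces $(\bar x,\bar t) = (x_0,t_0)$. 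This proves (\ref{itemlimit}); moreover the displayed inequalities must all be asymptotic equalities, which forces $u^{\ep_n}(x_{\ep_n},\theta_{\ep_n},t_{\ep_n}) \to \underline u(x_0,t_0)$, giving (\ref{itemu}). Finally, since $(x_0,t_0)$ is interior to $\bar B$, for $n$ large $(x_{\ep_n},t_{\ep_n})$ lies in the interior of $\bar B$, so $u^{\ep_n} - v^{\ep_n}$ has a genuine local (in $(x,t)$, and global-in-$\bar\Theta$) minimum at $(x_{\ep_n},\theta_{\ep_n},t_{\ep_n})$, which is item (\ref{itemmin}).

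The main obstacle — really the only subtle point — is the interplay between the two $\ep$-sequences hidden in the definition of $\underline u$: the ``outer'' $\ep$ indexing the minimum points $(x_\ep,\theta_\ep,t_\ep)$ and the ``inner'' $\ep'\le\ep$ arising when one unpacks $\underline u(x_0,t_0)$ as a double limit. One must be careful to pass to a common subsequence so that the same index $n$ controls both, and to check that shrinking $\ep_n$ to the inner value $\ep_n'$ does not destroy properties already extracted (it does not, since all the estimates from Proposition \ref{bd on utheta} are uniform in $\ep$ and the minimum construction can simply be redone at the smaller value). The rest is routine: boundedness of $Q$ on $\bar\Theta$ kills every $\ep Q(\theta)$ term in the limit, and lower/upper semicontinuity of $\underline u$/$\bar u$ (automatic for half-relaxed limits) supplies the one-sided inequalities needed to pin down the limit point.
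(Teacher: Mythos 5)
Your proposal is correct and follows essentially the same route as the paper's proof: pass to a strict extremum, take extremum points of $u^\ep - v^\ep$ on a compact cylinder times $\bar\Theta$, compare their values against a recovery sequence realizing $\underline{u}(x_0,t_0)$, and use strictness to identify the limit point and force the value convergence. Your explicit handling of the inner index $\ep'\leq\ep$ in the half-relaxed limit (and of the continuity of $u^\ep$ needed for the extremum to be attained) is slightly more careful than the paper's, which glosses over both points, but the argument is the same.
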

We postpone the proof of the lemma until the end of  subsection \ref{sec:pf main prop}. 
\subsubsection{Proof that $\underline{u}$ is a supersolution of (\ref{HJ})} We place one computation into a separate lemma.
\begin{lem} 
\label{lem:for main prop}
Suppose $v(x,t)$ is a smooth function that satisfies, for some $(x_0,t_0)$,
\begin{equation}
\label{v at x0t0}
v_t(x_0,t_0)-H(\partial_x v(x_0,t_0)) = -a .
\end{equation}
We set $\lambda =\partial_x v(x_0,t_0)$ and take $Q(\theta)$  to be the solution of the spectral problem (\ref{spectral}) corresponding to this $\lambda$. We define
\begin{equation}
\label{def vep}
v^\ep(x, \theta,t)= v(x,t) + \ep \ln Q(\theta).
\end{equation}
There exist positive constants $s$, $\ep_1$ such that for all $(x, \theta)\in B_s(x_0,t_0)$, all $\theta\in\Theta$ and all $0<\ep\leq \ep_1$ we have 
\begin{align}
\label{eqn for vep}
v^\ep_t (x,\theta,t)&- \ep \theta v^\ep_{xx} (x,\theta,t)- \theta (v^\ep_x)^2 (x,\theta,t)-\frac{\alpha}{\ep}v^\ep_{\theta \theta}(x,\theta,t )  - \frac{\alpha}{\ep^2} (v^{\ep}_\theta(x,\theta,t))^2 - r \leq -a/2.
\end{align}
\end{lem}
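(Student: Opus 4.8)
The plan is to compute each term in the left-hand side of (\ref{eqn for vep}) at the base point $(x_0,t_0)$ first, then use continuity to extend the estimate to a small ball. The key identity is that $Q(\cdot) = Q(\cdot,\lambda)$ solves the spectral problem (\ref{spectral}) with $\lambda = \partial_x v(x_0,t_0)$, so $\alpha Q_{\theta\theta}(\theta) = (H(\lambda) - \theta\lambda^2 - r)Q(\theta)$. First I would record the derivatives of $v^\ep$: since $v^\ep(x,\theta,t) = v(x,t) + \ep\ln Q(\theta)$, we have $v^\ep_t = v_t$, $v^\ep_x = v_x$, $v^\ep_{xx} = v_{xx}$, $v^\ep_\theta = \ep\, Q_\theta/Q$, and $v^\ep_{\theta\theta} = \ep(Q_{\theta\theta}/Q - (Q_\theta/Q)^2)$. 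Therefore
\[
\frac{\alpha}{\ep} v^\ep_{\theta\theta} + \frac{\alpha}{\ep^2}(v^\ep_\theta)^2 = \alpha\left(\frac{Q_{\theta\theta}}{Q} - \left(\frac{Q_\theta}{Q}\right)^2\right) + \alpha\left(\frac{Q_\theta}{Q}\right)^2 = \frac{\alpha Q_{\theta\theta}}{Q},
\]
which is the crucial cancellation: the two $\theta$-derivative terms of $v^\ep$ combine to $\alpha Q_{\theta\theta}/Q$ with \emph{no} $\ep$-dependence and no leftover gradient-squared term. Using the spectral equation, $\alpha Q_{\theta\theta}/Q = H(\lambda) - \theta\lambda^2 - r$.

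Next I would substitute. At a general point $(x,\theta,t)$, the left-hand side of (\ref{eqn for vep}) equals
\[
v_t(x,t) - \ep\theta v_{xx}(x,t) - \theta v_x(x,t)^2 - \big(H(\lambda) - \theta\lambda^2 - r\big) - r = v_t - H(\lambda) - \ep\theta v_{xx} - \theta\big(v_x^2 - \lambda^2\big).
\]
Now evaluate at $(x_0,t_0)$: there $v_x(x_0,t_0) = \lambda$ so the last term vanishes, and $v_t(x_0,t_0) - H(\lambda) = v_t(x_0,t_0) - H(\partial_x v(x_0,t_0)) = -a$ by hypothesis (\ref{v at x0t0}). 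Hence at the base point the expression equals $-a - \ep\theta_0 v_{xx}(x_0,t_0)$ — wait, more carefully, $-a - \ep\, \theta\, v_{xx}(x_0,t_0)$, which tends to $-a$ as $\ep \to 0$ uniformly in $\theta \in \Theta$ since $\theta$ ranges over a bounded set and $v_{xx}(x_0,t_0)$ is a fixed number. So there is $\ep_1 > 0$ with $-a - \ep\theta v_{xx}(x_0,t_0) \le -3a/4$ for all $0 < \ep \le \ep_1$, $\theta \in \Theta$.

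Finally I would invoke continuity of all the ingredients. The map $(x,t) \mapsto v_t(x,t) - H(\partial_x v(x,t)) - \theta v_x(x,t)^2 + \theta\lambda^2$ is continuous (using continuity of $H$ from Proposition \ref{prop:spectral}), and $\ep\theta v_{xx}(x,t)$ is bounded by $\ep\,\theta_M\,\|v_{xx}\|_{L^\infty(B_1(x_0,t_0))}$; so after possibly shrinking $\ep_1$ and choosing $s > 0$ small enough that the continuous part stays within $a/4$ of its value at $(x_0,t_0)$ on $B_s(x_0,t_0)$, the whole left-hand side is $\le -3a/4 + a/4 = -a/2$ for all $(x,t) \in B_s(x_0,t_0)$, $\theta \in \Theta$, $0 < \ep \le \ep_1$. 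This gives (\ref{eqn for vep}). I do not expect a genuine obstacle here — the lemma is essentially the algebraic verification that the spectral-problem perturbation $\ep\ln Q(\theta)$ exactly kills the singular-in-$\ep$ $\theta$-terms; the only mild care needed is tracking that $\theta$ lives in a bounded interval so that the $\ep\theta v_{xx}$ error is uniformly small, and that $v_x^2 - \lambda^2$ vanishes to first order at $(x_0,t_0)$ so continuity suffices (no need for a second-order expansion).
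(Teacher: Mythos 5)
Your proposal is correct and follows essentially the same route as the paper's proof: the cancellation $\tfrac{\alpha}{\ep}v^\ep_{\theta\theta}+\tfrac{\alpha}{\ep^2}(v^\ep_\theta)^2=\alpha Q_{\theta\theta}/Q$, the substitution via the spectral equation (\ref{spectral}), evaluation at $(x_0,t_0)$ using (\ref{v at x0t0}), and then continuity plus smallness of $\ep$ to absorb the $\ep\theta v_{xx}$ and $\theta(v_x^2-\lambda^2)$ errors on a small ball. No gaps.
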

We postpone the proof of  the lemma  and proceed with:
\begin{proof}[Proof that $\underline{u}$ is a supersolution of (\ref{HJ})]
Let us fix a point $(x_0,t_0)\in \rr\times (0,\infty)$ such that $\underline{u}(x_0,t_0)<0$.  By Proposition  \ref{bd on utheta}, $\underline{u}(x_0,t_0)$ is finite, so we denote $-\delta =\underline{u}(x_0,t_0)$. We aim to prove that $\underline{u}$ is a supersolution of (\ref{HJ}) in the viscosity sense, so let us suppose $\underline{u}-v$ has a local minimum at $(x_0,t_0)$ for some smooth function $v$. We shall show
\begin{equation}
\label{supersol}
v_t(x_0,t_0)-H(\partial_x v(x_0,t_0)) \geq 0. 
\end{equation}
We proceed by contradiction and assume that (\ref{supersol}) does not hold. Therefore, there exists $a>0$ such  that $v$ satisfies (\ref{v at x0t0}). 
We define the perturbed test function $v^\ep(x,\theta,t)$ by (\ref{def vep}). According to Lemma \ref{lem:for main prop}, there exist $s>0$ and $\ep_1>0$ such that $v^\ep$ satisfies (\ref{eqn for vep}) on $B_s(x_0,t_0)\times \Theta$ for all $\ep\leq \ep_1$.

By Proposition \ref{bd on utheta}, there exists some positive constants $c$, $C$, $\ep_0$ such that for all $\ep<\ep_0$ and for all $(x,t,\theta)\in B_{c/2}(x_0,t_0)\times \Theta$,
\begin{equation}
\label{utheta specific}
|u^\ep_\theta (x,\theta, t)|\leq \ep^{1/2} C.
\end{equation}
Let $(x_{\ep_n},\theta_{\ep_n},t_{\ep_n})$ be the sequence of points given by Lemma \ref{lem:u*}.  According to item (\ref{itemu}) of Lemma \ref{lem:u*}, we have
\[
\lim_{n\rightarrow \infty} u^{\ep_n} (x_{\ep_n},\theta_{\ep_n},t_{\ep_n}) = \underline{u}(x_0,t_0) = -\delta.
\]
Therefore, for all $n$ large enough,  we find,
 \[
 u^{\ep_n} (x_{\ep_n},\theta_{\ep_n},t_{\ep_n})\leq -\delta/2.
 \] 
Together with the estimate (\ref{utheta specific}) on $\partial_\theta u^\ep$, this implies that there exists $N_1>0$ such that for $n>N_1$ and for all $\theta\in \Theta$,
 \[
 u^{\ep_n} (x_{\ep_n},\theta,t_{\ep_n})\leq -\delta/4.
 \] 
 The previous estimate implies that $\rho^{\ep_n}(x_{\ep_n},t_{\ep_n}) $ is bounded from above, uniformly in $n$. Indeed, we use the definition of $\rho^{\ep_n}$, the relationship $n^{\ep_n}(x,\theta,t)= e^{\frac{u^{\ep_n}(x,\theta,t)}{\ep_n}}$ and the previous line to obtain
 \[
 \rho^{\ep_n}(x_{\ep_n},t_{\ep_n}) = \int e^{\frac{u^{\ep_n}(x_{\ep_n},\theta,t_{\ep_n})}{\ep_n}} \, d\theta \leq   \int e^{\frac{-\delta}{4\ep_n}} \, d\theta.
 \]
Thus there exists $N_2$ such that for all $n>N_2$, we have
\begin{equation}
\label{rho tiny}
 \rho^{\ep_n}(x_{\ep_n},t_{\ep_n}) \leq \frac{a}{4r}.
 \end{equation}
 
Since  $u^{\ep_n} - v^{\ep_n}$ has a local minimum at $(x_{\ep_n},\theta_{\ep_n},t_{\ep_n})$ 
\begin{equation}
\label{txderivs}
v^{\ep_n}_t(x_{\ep_n},\theta_{\ep_n},t_{\ep_n})=u^{\ep_n}_t(x_{\ep_n},\theta_{\ep_n},t_{\ep_n}), \  \  v^{\ep_n}_{xx}(x_{\ep_n},\theta_{\ep_n},t_{\ep_n})\leq u^{\ep_n}_{xx}(x_{\ep_n},\theta_{\ep_n},t_{\ep_n}),
\end{equation}
and
\begin{equation}
\label{thetaderivs}
v^{\ep_n}_{\theta} (x_{\ep_n},\theta_{\ep_n},t_{\ep_n})= u^{\ep_n}_{\theta}(x_{\ep_n},\theta_{\ep_n},t_{\ep_n}), \  \   v^{\ep_n}_{\theta \theta} (x_{\ep_n},\theta_{\ep_n},t_{\ep_n})\leq u^{\ep_n}_{\theta \theta}(x_{\ep_n},\theta_{\ep_n},t_{\ep_n}).
\end{equation}
We remark that (\ref{thetaderivs}) holds both if $\theta_{\ep_n}$ is an interior point of $\Theta$ and if $\theta_{\ep_n}$ is a boundary point of $\Theta$. Indeed, let us suppose that $\theta_{\ep_n}\in \bdry \Theta$. By the definition of $v^\ep$ in  (\ref{def vep}), we have $v^\ep_\theta = \ep\frac{Q_\theta}{Q}$, where $Q$ satisfies (\ref{spectral}). Since $Q$ is positive and satisfies Neumann boundary conditions, we have that $v^{\ep_n}_\theta$ is also zero on the boundary of $\Theta$. In particular,  we find 
\[
v^{\ep_n}_\theta=u^{\ep_n}_\theta=0 \text{ at }(x_{\ep_n},\theta_{\ep_n},t_{\ep_n}).
\]
 Since $u^{\ep_n}-v^{\ep_n}$ has a local minimum at $(x_{\ep_n},\theta_{\ep_n},t_{\ep_n})$ and the previous line holds, we also deduce that  $v^{\ep_n}_{\theta \theta} \leq u^{\ep_n}_{\theta \theta}$ holds at $(x_{\ep_n},\theta_{\ep_n},t_{\ep_n})$, as desired.

Using (\ref{txderivs}), (\ref{thetaderivs}), and that $u^{\ep_n}$ satisfies (\ref{eqn:uep}), we obtain, at $(x_{\ep_n},\theta_{\ep_n},t_{\ep_n})$,
\begin{equation}
\label{vepgeq}
v^{\ep_n}_t - \ep_n \theta_{\ep_n} v^{\ep_n}_{xx} -\theta_{\ep_n} (v^{\ep_n}_x)^2-\frac{\alpha}{\ep_n}v^{\ep_n}_{\theta \theta}  - \alpha\left(\frac{v^{\ep_n}_\theta}{\ep_n}\right)^2 -r(1-\rho^{\ep_n}) \geq 0 . 
\end{equation}
According to Lemma \ref{lem:for main prop}, $v$ satisfies (\ref{eqn for vep}) in $B_s(x_0,t_0)\times \Theta$. 
Since $(x_{\ep_n},t_{\ep_n})\rightarrow (x_0,t_0)$, we have   $(x_{\ep_n},t_{\ep_n})\in B_s(x_0,t_0)$ for $n$ large enough. Hence,  at the point $(x_{\ep_n},\theta_{\ep_n},t_{\ep_n})$, we have,
\[
v^{\ep_n}_t - {\ep_n} \theta_{\ep_n} v^{\ep_n}_{xx}- \theta_{\ep_n} (v^{\ep_n}_x)^2 -\frac{\alpha}{{\ep_n}}v^{\ep_n}_{\theta \theta}  - \frac{\alpha}{{\ep_n}^2} (v^{\ep_n}_\theta)^2 - r \leq -a/2.
\]
Subtracting the previous line from (\ref{vepgeq}) yields,
\[
r\rho^{\ep_n}(x_{\ep_n},t_{\ep_n}) \geq a/2.
\]
But this contradicts (\ref{rho tiny}). We have reached the desired contradiction and conclude that $\underline{u}$ is indeed a supersolution of (\ref{HJ}).
\end{proof}

The proof of Lemma \ref{lem:for main prop} is a simple computation. We include it for the sake of completeness.
\begin{proof}[Proof of Lemma \ref{lem:for main prop}]
We observe 
\[
v^\ep_\theta = \ep \frac{Q_\theta}{Q}; \   \  v^\ep _{\theta\theta} = \ep \frac{Q Q_{\theta \theta} - Q_\theta^2}{Q^2},
\]
so that we have, 
\begin{align*}
v^\ep_t &- \ep \theta v^\ep_{xx} - \theta (v^\ep_x)^2 -\frac{\alpha}{\ep}v^\ep_{\theta \theta}  - \frac{\alpha}{\ep^2} (v^{\ep}_\theta)^2 - r(1 -\rho^\ep) =\\
&=v_t - \ep \theta v_{xx} - \theta (v_x)^2 - \alpha \frac{Q Q_{\theta \theta} - Q_\theta^2}{Q^2} - \alpha \frac{Q_\theta^2}{Q^2}  - r(1-\rho^\ep) \\
&= v_t - \ep \theta v_{xx} - \theta (v_x)^2 - \alpha \frac{Q_{\theta \theta} }{Q}  - r(1- \rho^\ep).
\end{align*}
Thus we have, for some $s$ small, and for all $(x,\theta,t)\in B_s(x_0,t_0)\times\Theta$ and all $\ep$ small enough,
\begin{align*}
v^\ep_t (x,\theta,t)&- \ep \theta v^\ep_{xx} (x,\theta,t)- \theta (v^\ep_x)^2 (x,\theta,t)-\frac{\alpha}{\ep}v^\ep_{\theta \theta}(x,\theta,t)  - \frac{\alpha}{\ep^2} (v^{\ep}_\theta(x,\theta,t))^2 - r(1- \rho^\ep(x,t)) =\\
&\leq v_t(x_0,t_0)  - \theta (v_x)^2(x_0,t_0) - \alpha \frac{Q_{\theta \theta} (\theta) }{Q(\theta) } - r(1- \rho^\ep(x,t))   + a/2\\
& = -a/2 + H(\partial_x v(x_0,t_0))  - \theta (v_x)^2(x_0,t_0) - \alpha \frac{Q_{\theta \theta}(\theta)  }{Q(\theta) }  -  r(1- \rho^\ep(x,t))  \\
&= -a/2 + H( \lambda)  - \theta \lambda^2 - \alpha \frac{Q_{\theta \theta}(\theta)  }{Q(\theta) }  - r(1- \rho^\ep(x,t)) \\
&= -a/2 +r\rho^\ep(x,t),
\end{align*}
where the last equality follows since $Q$ satisfies (\ref{spectral}).
\end{proof}

\subsubsection{Proof that $\bar{u}$ is a subsolution of (\ref{HJ})}
\begin{proof}[Proof that $\bar{u}$ is a subsolution of (\ref{HJ})]
This proof is less involved than that for $\underline{u}$. According to (\ref{sign}) we have $\overline{u}\leq 0$. 

Next, let us suppose that $\overline{u}-v$ has a local maximum at $(x_0,t_0)$. We proceed as in the proof for $\underline{u}$:  for contradiction, we assume  
\[
v_t(x_0,t_0)-H(\partial_x v(x_0,t_0)) = a, 
\]
for some $a>0$. We set $\lambda =\partial_x v(x_0,t_0)$ and take $Q(\theta)$  to be the solution of the spectral problem (\ref{spectral}) corresponding to this $\lambda$. 
As in the previous proof, we define $v^\ep$ by (\ref{def vep}) 
and find that for some $s$ small,  for all $(x,t, \theta)\in B_s(x_0,t_0)\times\Theta$, and for all $\ep$ small enough,
\begin{equation}
\label{eqn:vepsuper}
v^\ep_t (x,\theta,t)- \ep \theta v^\ep_{xx} (x,\theta,t)- \theta (v^\ep_x)^2 (x,\theta,t)-\frac{\alpha}{\ep}v^\ep_{\theta \theta}(x,\theta,t)  - \frac{\alpha}{\ep^2} (v^{\ep}_\theta(x,\theta,t))^2 - r \geq 
  a/2 .
\end{equation}
According to Lemma \ref{lem:u*} there exists a subsequence  $\{\ep_n\}$ and points $(x_{\ep_n}, \theta_{\ep_n}, t_{\ep_n})$  such that
$u^{\ep_n} -v^{\ep_n}$ has a local maximum at $(x_{\ep_n}, \theta_{\ep_n}, t_{\ep_n})$, and $(x_{\ep_n},  t_{\ep_n})\rightarrow (x_0,t_0)$. Since $u^{\ep_n}$ satisfies (\ref{eqn:uep}), we obtain, at $(x_{\ep_n}, \theta_{\ep_n}, t_{\ep_n})$ ,
\begin{equation}
\label{eqn:vepsuper2}
 v^{\ep_n}_t - {\ep_n} \theta_{\ep_n} v^{\ep_n}_{xx} - \theta_{\ep_n} (v^{\ep_n}_x)^2 -\frac{\alpha}{\ep_n}v^{\ep_n}_{\theta \theta}  - \frac{\alpha}{\ep_n^2} (v^{\ep_n}_\theta)^2 - r(1- \rho^{\ep_n})\leq 0.
\end{equation}
(The previous line holds even if $\theta_{\ep_n}$ is a boundary point of $\Theta$, by an argument similar to that in the previous part of the proof of this Proposition). 
For $n$ large enough, $(x_{\ep_n}, t_{\ep_n})\in B_s(x_0,t_0)$, so both (\ref{eqn:vepsuper}) and (\ref{eqn:vepsuper2}) hold at $(x_{\ep_n}, \theta_{\ep_n}, t_{\ep_n})$. Subtracting (\ref{eqn:vepsuper}) from (\ref{eqn:vepsuper2}) yields
\begin{equation}
\label{rhoimpossible}
r\rho^{\ep_n}(x_{\ep_n},t_{\ep_n})\leq  -a/2 ,
\end{equation}
which is impossible since $\rho^{\ep_n}\geq0$.
\end{proof}

\subsubsection{Proof of the auxillary lemma}
Now we present the proof of Lemma \ref{lem:u*}.  
\begin{proof}[Proof of Lemma \ref{lem:u*}]
We will give the proof of the case that $\underline{u}-v$ has a local minimum at $(x_0,t_0)$. The proof of the other case is similar. 

Without loss of generality we assume that $\underline{u}-v$ has a strict local minimum at $(x_0,t_0)$ in $B_{r}(x_0,t_0)$ for some $r>0$. Let $(x_\ep, \theta_\ep, t_\ep)$ be a local minimum of $u^{\ep_n} - v^{\ep_n}$ in $B_{r/2}(x_0,t_0)\times \Theta$.

By the definition of $\underline{u}(x_0,t_0)$, there exists a sequence $(\bar{x}_\ep, \bar{\theta}_{\ep}, \bar{t}_\ep)$ with $(\bar{x}_\ep,  \bar{t}_\ep) \rightarrow (x_0,t_0)$ and such that $u^{\ep}(\bar{x}_\ep, \bar{\theta}_{\ep}, \bar{t}_\ep) \rightarrow \underline{u}(x_0,t_0)$.

We proceed with the proof of item (\ref{itemlimit}) of the lemma. Let $\{(x_{\ep_n},\theta_{\ep_n}, t_{\ep_n})\}_{n=0}^{\infty}$ be any subsequence of $(x_\ep,  t_\ep)$ with $(x_{\ep_n},t_{\ep_n})\rightarrow (y,s)\in \bar{B}_{r/2}(x_0,t_0)$. For $n$ large enough we have $(\bar{x}_\ep, \bar{t}_\ep)\in B_{r/2}(x_0,t_0)$. Since $(x_{\ep_n},\theta_{\ep_n}, t_{\ep_n})$ is a local minimum of $u^{\ep_n} - v^{\ep_n}$ on $B_{r/2}(x_0,t_0)\times \Theta$, we obtain,
\begin{equation}
 \label{eq:uepveplocmin}
u^{\ep_n}(x_{\ep_n},\theta_{\ep_n}, t_{\ep_n}) - v^{\ep_n}(x_{\ep_n},\theta_{\ep_n}, t_{\ep_n}) \leq u^{\ep_n}(\bar{x}_{\ep_n},\bar{\theta}_{\ep_n}, \bar{t}_{\ep_n}) - v^{\ep_n}(\bar{x}_{\ep_n},\bar{\theta}_{\ep_n}, \bar{t}_{\ep_n}).
\end{equation}
We take $\limsup_{n\rightarrow \infty}$ of both sides of (\ref{eq:uepveplocmin}).  
The definition of $(\bar{x}_\ep, \bar{\theta}_{\ep}, \bar{t}_\ep)$  implies that the first term on the right-hand side converges to $\underline{u}(x_0,t_0)$. In addition, since $v$ is continuous and the sequences $(x_{\ep_n},t_{\ep_n})$ and $(\bar{x}_\ep,  \bar{t}_\ep)$ converge to $(y,s)$ and $(x_0,t_0)$, respectively, we find,
\begin{equation}
\label{afterlimsup}
\limsup_{n\rightarrow \infty}(u^{\ep_n}(x_{\ep_n},\theta_{\ep_n}, t_{\ep_n})) - v(y,s) \leq \underline{u}(x_0,t_0)-v(x_0,t_0).
\end{equation}
Since $(x_{\ep_n},t_{\ep_n})\rightarrow (y,s)$, the definition of $\underline{u}(y,s)$ implies,
\begin{equation*}
\limsup_{n\rightarrow \infty}(u^{\ep_n}(x_{\ep_n},\theta_{\ep_n}, t_{\ep_n})) \geq \liminf_{n\rightarrow \infty}(u^{\ep_n}(x_{\ep_n},\theta_{\ep_n}, t_{\ep_n})) \geq \underline{u}(y,s).
\end{equation*}
We use the previous line to estimate the left-hand side of (\ref{afterlimsup}) from below and obtain  $\underline{u}(y,s)-v(y,s) \leq \underline{u}(x_0,t_0)-v(x_0,t_0)$. Since $(x_0,t_0)$ is a strict local maximum, we see $(y,s)=(x_0,t_0)$. This completes the proof of items (\ref{itemlimit}) and (\ref{itemmin}). 

Next let us take $\liminf_{n\rightarrow \infty}$ of both sides of (\ref{eq:uepveplocmin}). Since we now know $(x_\ep,  t_\ep)\rightarrow (x_0,t_0)$ and $(\bar{x}_\ep, \bar{t}_\ep)\rightarrow (x_0,t_0)$, the terms with $v$ are equal. In addition, the definition of $(\bar{x}_\ep, \bar{\theta}_{\ep}, \bar{t}_\ep)$  implies that the first term on the right-hand side converges to $\underline{u}(x_0,t_0)$. Thus we find
\[
 \liminf_{n\rightarrow \infty}(u^{\ep_n}(x_{\ep_n},\theta_{\ep_n}, t_{\ep_n}))  \leq \underline{u}(x_0,t_0).
\]
Since $(x_\ep,  t_\ep)\rightarrow (x_0,t_0)$, the definition of $\underline{u}(x_0,t_0)$ implies that equality holds in the above. This completes the proof of the lemma.
\end{proof}

\subsection{Proof of Part (II) of Proposition \ref{main result}} 
\label{sec:at time 0}
In the previous subsection we showed that $\bar{u}$ and $\underline{u}$ are a supersolution and subsolution, respectively, of (\ref{HJ}) on $\rr\times (0,\infty)$.  In this section we study the behavior of $\bar{u}$ and $\underline{u}$ at time $t=0$.

\begin{proof}[Proof that $\bar{u}$ satisfies (\ref{eqn:us at time 0})]
First we show that if $x\in J$ then $\bar{u}(x,0) = 0$. Indeed, if $x\in J$ then there exists a point $\theta^*\in \Theta$ such that $n_0(x,\theta^*) = a >0$. Hence
\[
\bar{u}(x,0)\geq \lim_{\ep\rightarrow 0} \ep \ln n^\ep(x,\theta^*) =  \lim_{\ep\rightarrow 0} \ep \ln a =0.
\]
And, according to our observation (\ref{sign}) we have $\bar{u}(x,0)\leq 0$, so we obtain $\bar{u}(x,0)= 0$. 

In the remainder of the proof we will analyze the behavior of $\bar{u}(x,0)$ for $x\notin \bar{J}$. 
To this end, we observe that, by the definition of $J$,
\[
\rr\setminus J = \{x: \   n_0(x,\theta)=0 \text{ for all }\theta\in \Theta.\}
\]
We fix a constant $\mu>0$ and a cutoff function $\zeta\in C^{\infty}(\rr)$ that satisfies
\begin{equation}
\label{cutoff}
\begin{cases}
\zeta = 0 \text{ on } \bar{J}, \    \zeta>0 \text{ on }\rr\setminus{\bar{J}}&\\
0\leq \zeta\leq 1.&
\end{cases}
\end{equation}

\textbf{First step:} We claim that $\bar{u}$ is a viscosity subsolution of
\begin{equation}
\label{var ineq at 0}
\min\{\bar{u}_t-H(\bar{u}_x), \bar{u}+\mu \zeta\}\leq 0 \text{ on } \rr\times \{0\},
\end{equation}
by which we mean if $\bar{u}-v$ has a local maximum at $(x_0,0)$ for some test function $v$, then either 
\begin{equation}
\label{ugeq0}
\bar{u}(x_0,0)  \leq  -\mu \zeta(x_0)
\end{equation}
or
\begin{equation}
\label{eqnvx0}
v_t- H(v_x) \leq 0 \text{ at }(x_0,0).
\end{equation}
For $x\in J$, we have $\bar{u}(x_0,0)=0$ and $\mu \zeta(x_0) =0$, so we find (\ref{ugeq0}) holds.  

Now let us suppose $x_0\in \rr\setminus \bar{J}$, (\ref{ugeq0}) doesn't hold, and $\bar{u} - v$ has a local maximum at $(x_0,0)$ for some smooth $v$.  We proceed as in the proof of Part (I) of Proposition \ref{main result}: let us assume for contradiction that (\ref{eqnvx0}) also does not holds, so that 
\begin{equation}
\label{noteqnvx0}
v_t- H(v_x) =a>0 \text{ at }(x_0,0)
\end{equation}
for some $a>0$. We set $\lambda = \partial_xv(x_0,0)$ and take $Q(\theta)$ to be the solution of the spectral problem (\ref{spectral}) corresponding to this $\lambda$.  We  define $v^\ep$ by (\ref{def vep}), and find that 
there exist points $(x_{\ep_n}, \theta_{\ep_n}, t_{\ep_n})$  such that  $(x_{\ep_n},  t_{\ep_n})\rightarrow (x_0,t_0)$  as $n\rightarrow \infty$, 
$u^{\ep_n} -v^{\ep_n}$ has a local maximum at $(x_{\ep_n}, \theta_{\ep_n}, t_{\ep_n})$,  and $\bar{u}(x_0,0)=\lim u^{\ep_n}(x_{\ep_n}, \theta_{\ep_n}, t_{\ep_n})$. We claim  
\begin{equation}
 \label{tep}
t_{\ep_n}>0 \text{ for all $n$ large enough}.
\end{equation}
If (\ref{tep}) holds, then $u^{\ep_n}$ satisfies the equation (\ref{eqn:uep}) at $(x_{\ep_n}, \theta_{\ep_n}, t_{\ep_n})$. Hence the argument in Part (I) of the proof of Proposition \ref{main result} applies in this situation as well and leads to a contradiction (namely, we will find that (\ref{eqn:vepsuper}), (\ref{eqn:vepsuper2}) both hold, which again yields (\ref{rhoimpossible}), and the latter cannot hold). Thus, once we show (\ref{tep}), we will find that $v$ satisfies (\ref{eqnvx0}) and so we will have established (\ref{var ineq at 0}).

We proceed by contradiction and assume that (\ref{tep}) does not hold. Thus, there exists a subsequence of $\{\ep_n\}$, also denoted $\{\ep_n\}$, with $t_{\ep_n}=0$. Since $x_0\in \rr\setminus \bar{J}$, there exists $s>0$ such that  $B_s(x_0)\subset (\rr\setminus \bar{J})$. We have that for all $n$ large enough, $x_{\ep_n}\in B_s(x_0)$ and hence $n_0(x_{\ep_n}, \theta_{\ep_n})=0$ for all $n$ large enough. Since, by definition, we have $u^\ep(\cdot, \cdot, 0)=\ep \ln n_0(\cdot, \cdot)$, we obtain,
\[
\bar{u}(x_0,0)=\lim_{n\rightarrow \infty} u^{\ep_n}(x_{\ep_n}, \theta_{\ep_n}, t_{\ep_n}) =-\infty.
\]
But we had assumed (\ref{ugeq0}) doesn't hold, so that $\bar{u}(x_0,0)>-\mu \zeta(x_0,0)>-\infty$. We have reached the desired contradiction, hence (\ref{tep}) must hold, and so we conclude that $\bar{u}$ is a viscosity subsolution of (\ref{var ineq at 0}).

\textbf{Second step:} We will now use that $\bar{u}$ is a viscosity subsolution of (\ref{var ineq at 0}) to prove $\bar{u} (x, 0)= -\infty$ on $\rr\setminus \bar{J}$. To this end, let us fix any $x_0\in \rr\setminus \bar{J}$ and assume for contradiction  
\begin{equation}
\label{uM}
\bar{u}(x_0,0)=-M>-\infty.
\end{equation}
For $\delta>0$, let us define the test functions
\[
v^\delta(x,t) = \frac{|x-x_0|^2}{\delta} + \nu(\delta) t,
\]
where we use $\nu(\delta)$ to  denote
\begin{equation}
\label{choicemu}
\nu(\delta)=\frac{8\theta_M M}{\delta}+r.
\end{equation}
Since $\bar{u}$ is upper-semicontinuous, there exist $(x_\delta,t_\delta)$  such that $\bar{u}-v^\delta$ has a maximum at $(x_\delta,t_\delta)$ in $\rr\times [0,\infty)$. In particular, we have 
\[
 (\bar{u}-v^\delta)(x_\delta,t_\delta) \geq (\bar{u}-v^\delta)(x_0,0)=\bar{u}(x_0,0) =-M,
\]
where the  equalities follow since $v^\delta(x_0,0)=0$ and from (\ref{uM}). We now use  that $\bar{u}\leq 0$ 
 and the definition of $v^\delta$ to estimate the left-hand side of the previous line from above by $-\frac{|x_\delta-x_0|^2}{\delta}$. Thus we obtain an estimate on the distance between $x_\delta$ and $x_0$:
\begin{equation}
\label{estxdelx0}
\frac{|x_\delta-x_0|^2}{\delta} \leq M \text{ for all }\delta.
\end{equation}

We will now establish the inequality
\begin{equation}
\label{estonquantity}
\nu(\delta) - H\left(\frac{2(x_\delta-x_0)}{\delta}\right)>0 \text{ for all }\delta.
\end{equation}
Indeed, according to Proposition \ref{prop:spectral}, we have $-H(\lambda) \geq -\lambda^2 \theta_M-r$ for all $\lambda$. We apply this with $\lambda =\frac{2(x_\delta-x_0)}{\delta}$ and obtain,
\[
-H\left(\frac{2(x_\delta-x_0)}{\delta}\right) \geq -\frac{4(x_\delta-x_0)^2}{\delta^2}\theta_M -r \geq 
-\frac{4\theta_M M}{\delta}-r,
\]
where the second inequality follows from (\ref{estxdelx0}).  Let us add $\nu(\delta)$ to both sides of the previous line. The left-hand side becomes exactly the left-hand side of (\ref{estonquantity}). The right-hand side becomes  $\frac{4\theta_M M}{\delta}$, due to our choice of $\nu$ in (\ref{choicemu}). Thus we find (\ref{estonquantity}) holds.

We now recall that $\bar{u}-v^\delta$ has a maximum at $(x_\delta,t_\delta)$ in $\rr\times [0,\infty)$. 
Let us suppose that  $t_\delta>0$ for some $\delta$. According to Part (I) of Proposition \ref{main result}, $\bar{u}$ is a subsolution of (\ref{HJ}) in $\rr\times (0,\infty)$, which implies
\begin{equation}
\label{eqn v at xdelta}
\nu(\delta) - H\left(\frac{2(x_\delta-x_0)}{\delta}\right) \leq 0.
\end{equation}
But this is impossible, as we have just established (\ref{estonquantity}).  Therefore, we must have $t_\delta=0$ for all $\delta$. But we also know that 
$\bar{u}$ is a subsolution of (\ref{var ineq at 0}) on $\rr\times \{0\}$. Therefore, we have
\[
\min\left\{ \nu(\delta) - H\left(\frac{2(x_\delta-x_0)}{\delta}\right) , \bar{u}(x_\delta,0) + \mu \zeta(x_\delta)\right\} \leq 0.
\]
But, again according to (\ref{estonquantity}),  we have that the first term inside the $\min$ must be strictly positive. Therefore, in order for the previous line to hold, the second term inside the $\min$ must be non-positive. Thus we have,
\begin{equation}
\label{bumuzeta}
\bar{u}(x_\delta,0) \leq -\mu \zeta(x_\delta).
\end{equation}
Since  $(x_\delta, 0)$ is a local maximum of $\bar{u}-v^\delta$, we have
\[
\bar{u}(x_0,0)-v^\delta(x_0,0) \leq (\bar{u}-v^\delta)(x_\delta,0) .
\]
Because  $-M=\bar{u}(x_0,0)$ and $v^\delta(x_0,0)=0$, we have that the left-hand side of the previous line is exactly $-M$.  In addition, we use that $v^\delta$ is non-negative and the estimate (\ref{bumuzeta}) to bound the right-hand side of the previous line from above. We find,
\[
 -M \leq -\mu \zeta(x_\delta).
\]
Since $x_\delta \rightarrow x_0$ as $\delta\rightarrow 0$ and $\zeta$ is continuous, we obtain $-M\leq -\mu \zeta(x_0)$, which is impossible since $\zeta(x_0)>0$ and $\mu$ is arbitrary. We have obtained the desired contradiction and thus  (\ref{uM}) cannot hold. We conclude $\bar{u}(x_0,0)=-\infty$, and hence the proof is complete.
\end{proof}

\begin{proof}[Proof that $\underline{u}$ satisfies (\ref{eqn:u under at 0})]
This proof is similar to the one for $\bar{u}$. First we show that if $x_0\in \rr\setminus K$, then $\underline{u}(x_0,0)=-\infty$.  Indeed, since $x_0\in \rr\setminus K$, there exists $\theta_0$ with $n_0(x_0,\theta_0)=0$. Therefore,
\[
 \underline{u}(x_0,0)\leq \liminf_{\ep\rightarrow 0} u^\ep(x_0,\theta_0,0) = \liminf_{\ep\rightarrow 0} \ep \ln (n_0(x_0,\theta_0))=-\infty.
\]

We will now prove that $\underline{u}(x_0,0)=0$ for $x_0\in K$. 

\textbf{First step:} We prove that $\underline{u}$ is a supersolution of
\begin{equation}
 \label{var ineqn underu}
 \max\{\underline{u},\underline{u}_t - H(\underline{u}_x)\}\geq 0 \text{ on }K\times \{0\}.
\end{equation}
To this end, let us suppose  $x_0\in K$, 
\begin{equation}
\label{uux0}
\underline{u}(x_0,0)<0,
\end{equation}
and $\underline{u}-v$ has a local minimum at $(x_0,0)$ for some test function $v$. We proceed as in the proof of Part (I) of Proposition \ref{main result}: we  define $v^\ep$ by (\ref{def vep}), and find that
there exist points $(x_{\ep_n}, \theta_{\ep_n}, t_{\ep_n})$  such that $(x_{\ep_n},  t_{\ep_n})\rightarrow (x_0,t_0)$ as $n\rightarrow \infty$, 
$u^{\ep_n} -v^{\ep_n}$ has a local minimum at $(x_{\ep_n}, \theta_{\ep_n}, t_{\ep_n})$,   and $\underline{u}(x_0,0)=\lim_{n\rightarrow \infty} u^{\ep_n}(x_{\ep_n}, \theta_{\ep_n}, t_{\ep_n})$. We claim  
\begin{equation}
 \label{tep for uunder}
t_{\ep_n}>0 \text{ for all $n$ large enough}.
\end{equation}
If (\ref{tep}) holds, then the argument in the proof of Part (I) of Proposition \ref{main result} applies in this situation as well. Thus, once we show (\ref{tep for uunder}), we will find that $v$ satisfies $v_t+v_xc(v_x)\geq 0$ at $(x_0,0)$ and so we will have established that (\ref{var ineqn underu}) in the viscosity sense.

We proceed by contradiction and assume that (\ref{tep for uunder}) does not hold. Thus, there exists a subsequence, also denoted $\{\ep_n\}$, with $t_{\ep_n}=0$. Since $x_0\in K$, there exists $s>0$ such that  $B_s(x_0)\subset K$. Since $x_n\rightarrow x_0$, we have that for all $n$ large enough, $x_{\ep_n}\in B_s(x_0)$. Therefore, there exists $m>0$ so that $n_0(x_{\ep_n},\theta_{\ep_n})\geq m$ for all $n$ large enough. Hence we find,
\[
 \underline{u}(x_0,0)=\lim_{n\rightarrow \infty} u^{\ep_n}(x_{\ep_n}, \theta_{\ep_n}, 0) =\lim_{n\rightarrow \infty} \ep_n \ln(n_0(x_{\ep_n},\theta_{\ep_n})) \geq \lim_{n\rightarrow \infty} \ep_n \ln(m) =0,
\]
where the first equality follows from the definition of the sequence $(x_{\ep_n}, \theta_{\ep_n}, t_{\ep_n})$. But the previous line contradicts our assumption (\ref{uux0}). Therefore, (\ref{tep for uunder}) must hold, and we conclude that $\underline{u}$ is a viscosity supersolution of (\ref{var ineqn underu}).

\textbf{Second step:} Let us fix $x_0\in K$. We will prove $\underline{u}(x_0,0)\geq 0$, which, together with (\ref{sign}), implies $\underline{u}(x_0,0)=0$. Let us suppose for contradiction 
\begin{equation}
\label{undux0}
\underline{u}(x_0,0)<0.
\end{equation} 
We point out that  $\underline{u}(x_0,0)$  is finite. Indeed, if $\underline{u}(x_0,0)=-\infty$ then $\underline{u}-v$ has a minimum at $(x_0,0)$ for all $v$, and since we know $\underline{u}$ is a supersolution of (\ref{var ineqn underu}), we find $v_t-H(v_x)\geq 0$ at $(x_0,0)$ for all $v$, which is of course impossible.

Since $\underline{u}$ is lower-semicontinuous and finite at $(x_0,0)$, there exists a neighborhood $Q$ of $(x_0,0)$ and some finite $M>0$  such that if  $(x,t)\in Q$, then $\underline{u}(x,t)>-M$.

We define, for $\delta>0$, the test functions
\[
v^\delta(x,t) = -\frac{|x-x_0|^2}{\delta} - \nu(\delta) t,
\]
where we define $\nu$ by (\ref{choicemu}) as in the previous proof. 
There exists a sequence $(x_\delta,t_\delta)\rightarrow (x_0,0)$ such that $\underline{u}-v^\delta$ has a local minimum at  $(x_\delta,t_\delta)\in Q$. We find, as in the previous proof, 
%\[
%-M +\frac{|x_\delta-x_0|^2}{\delta} \leq (\underline{u}-v^\delta)(x_\delta,t_\delta)\leq (\underline{u}-v^\delta)(x_0,0)\leq 0,
%\]
%which implies 
that the upper bound   (\ref{estxdelx0}) on $|x_\delta-x_0|^2$  holds here as well. We use (\ref{estxdelx0}), the definition of $\nu(\delta)$  and the properties of $H$ given in Proposition \ref{prop:spectral} to obtain, for all $\delta$,
\begin{equation}
\label{negubar}
-\nu(\delta) + H\left(-\frac{2(x_\delta-x_0)}{\delta}\right) < 0 \text{ for all }\delta.
\end{equation}
Let us suppose $t_\delta>0$. Since $\underline{u}-v^\delta$ has a minimum at $(x_\delta,t_\delta)$ and, according to Part (I) of Proposition \ref{main result}, $\underline{u}$ is a supersolution of (\ref{HJ}) in $\rr\times (0,\infty)$, we find that $\geq$ must hold in \ref{negubar}). But this is impossible, since we have already established (\ref{negubar}). 
%\[
%-\nu(\delta) + H\left(-\frac{2(x_\delta-x_0)}{\delta}\right) \geq 0,
%\]
%which is impossible, as we have established (\ref{negubar}). 
 Therefore, we find $t_\delta=0$ for all $\delta$. According to (\ref{var ineqn underu}), we have that $\underline{u}$ is a supersolution of (\ref{var ineqn underu}) on $K\times \{0\}$, so we find
 \[
 \max\left\{ \underline{u}(x_\delta,0), -\mu + H\left(-\frac{2(x_\delta-x_0)}{\delta}\right)\right\} \geq 0.
 \] 
 According to (\ref{negubar}) we see that the second term in the $\max$ is strictly negative. Hence we obtain  $\underline{u}(x_\delta,0)\geq 0$ for all $\delta$. Together with  $v^\delta(x_0,0)=0$, the fact that  $(x_\delta, 0)$ is a minimum of $\underline{u}-v^\delta$, and $v^\delta\leq 0$, this implies,
\[
 \underline{u}(x_0,0) =  \underline{u}(x_0,0)- v^\delta(x_0,0) \geq \underline{u}(x_\delta,0)- v^\delta(x_\delta,0)\geq 0.
\]
But this contradicts our assumption (\ref{undux0}). Thus we find $\underline{u}(x_0,t)\geq 0$ and the proof is complete.
\end{proof}

\section{Proof of the main result}
\label{sec:result on n}
In this short section we  use the results that we've established in the rest of the paper to give the proof of our main result, Theorem \ref{result on n}. The arguments are similar to those in the proofs of \cite[Theorem 1.1]{EvansSouganidis} and   \cite[Theorem 1]{BarlesEvansSouganidis}.

\begin{proof}[Proof of Theorem \ref{result on n}]
Since (\ref{HJ}) satisfies the comparison principle (see Proposition \ref{prop:prelem}), Proposition \ref{main result} and the definitions of $u_J$ and $u_K$  imply $\bar{u}\leq u_J$ and $\underline{u}\geq u_K$ on $\rr\times (0,\infty)$.

Let us fix some $\bar{B}_s(x_0,t_0)\subset \{u_J<0\}$. Since $u_J$ is continuous, there exists $a>0$ such that  $u_J< -a$ for all $(x,t)\in \bar{B}_s(x_0,t_0)$. Since $\bar{u}\leq u_J$, we have $\bar{u}(x,t) <-a$ for all $(x,t)\in \bar{B}_s(x_0,t_0)$ as well. Therefore, $u^\ep(x,\theta,t)\leq -\frac{a}{2}$ for all $(x,t)\in \bar{B}_s(x_0,t_0)$, for all $\theta\in \Theta$ and for all $\ep$ small enough. Thus, for all $(x,t)\in \bar{B}_s(x_0,t_0)$ and for all $\theta$, we have
\[
0\leq n^\ep(x,\theta,t)=e^{\frac{u^\ep(x,\theta,t)}{\ep}}\leq e^{\frac{-a}{2\ep}},
\]
so we find that $n^\ep(x,\theta,t)\rightarrow 0$ uniformly at an exponential rate on $\bar{B}_s(x_0,t_0)\times \Theta$.

Now let us suppose $(x_0,t_0)$ is a point in the interior of $\{u_K=0\}$, so that $B_s(x_0,t_0)\subset \{u_K=0\}$ for some $s>0$. According to (\ref{sign}), we have $\underline{u}\leq 0$ on $\rr\times(0,\infty)$.  Since $\underline{u}\geq u_K$, we have $\underline{u}\geq 0$ on $B_s(x_0,t_0)$, so we see $\underline{u}\equiv 0$ on $B_s(x_0,t_0)$. We define the test function
\[
 \phi(x,t)=-(x-x_0)^2-(t-t_0)^2.
\]
Since $\underline{u}\equiv 0$ on $B_s(x_0,t_0)$, we find that $\underline{u}-\phi$ has a local minimum at $(x_0,t_0)$. Therefore there exists a sequence $(x_\ep,\theta_\ep, t_\ep)$ with $(x_\ep,t_\ep)\rightarrow (x_0,t_0)$ such that $u^\ep-\phi$ has a local minimum at $(x_\ep,\theta_\ep, t_\ep)$. Since $u^\ep$ is satisfies (\ref{eqn:uep}) in the viscosity sense, we find, at $(x_\ep,\theta_\ep, t_\ep)$,
\[
 \phi_t\geq\ep \theta_\ep \phi_{xx}  +\theta (\phi_x)^2 +r(1-\rho^\ep).
\]
Using the definition of $\phi$ and rearranging yields,
\[
 r\rho^\ep(x_\ep,t_\ep) \geq r -2\ep\theta_\ep +2\theta (x_\ep-x_0)^2 + 2(t_\ep-t_0).
\]
Taking the limit  $\ep\rightarrow 0$  of the previous line yields,
\[
 \lim_{\ep\rightarrow 0}\rho^\ep(x_\ep,t_\ep) \geq 1.
\]
Recalling the definition of $\limsup^*$ completes the proof.
\end{proof}

\subsection{Proofs of the corollaries}
\label{subsect:pf of cors}
Corollary \ref{corc*informal} follows easily from our main result, Theorem \ref{result on n}, and the following lemma, which is essentially a restatement of a result of Majda and Sougandis \cite{MS}. Before stating the lemma, we introduce one more bit of notation. Given a bounded interval $\Omega\subset \rr$, we use $g_\Omega:\rr\rightarrow\rr$ to denote a $C^1$  bounded function that is positive on $\Omega$, negative on $\rr\setminus \bar{\Omega}$, with a maximum at some $\bar{x}\in\Omega$,  strictly increasing to the left of $\bar{x}$ and strictly decreasing to the right of $\bar{x}$. 

\begin{lem}
\label{lemfornewcor}
Assume $\Omega\subset \rr$ is a bounded interval. Let $w_\Omega$ be the unique viscosity solution of 
\begin{equation}
\label{eqn:wOm}
\begin{cases}
\partial_t w_\Omega = c^* |\partial_x w_\Omega| \text{ on }\rr\times (0,\infty),\\
w_\Omega(x,0)=g_\Omega (x) \text{ on } \rr.
\end{cases}
\end{equation}
Let $u_\Omega$ be given by Lemma 1.1. Then:
\begin{enumerate}
\item \label{itemMS} $\{u_\Omega <0\}\supseteq \{w_\Omega <0\}$ and $\{u_\Omega =0\}\supseteq\{w_\Omega \geq 0\}$; and,
\item \label{itemchar} $\{x: w_\Omega(x,t) <0\} = \{x: d(x,\Omega) > c^*t\}$ and  $\{x: w_\Omega(x,t) > 0\} = \{x: d(x,\Omega) < c^*t\}$.  
\end{enumerate}
\end{lem}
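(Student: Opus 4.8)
The plan is to prove the two items separately, with item (\ref{itemchar}) being the more elementary and item (\ref{itemMS}) the one requiring the comparison principle for (\ref{HJ}).

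For item (\ref{itemchar}): the equation $\partial_t w_\Omega = c^*|\partial_x w_\Omega|$ is a standard first-order Hamilton-Jacobi equation whose solution is given by a Hopf-Lax-type formula. First I would invoke the known representation $w_\Omega(x,t) = \sup\{g_\Omega(y) : |y-x|\leq c^*t\}$, which holds because the Hamiltonian $p\mapsto c^*|p|$ has Legendre transform equal to $0$ on $[-c^*,c^*]$ and $+\infty$ outside. From this formula, $w_\Omega(x,t)>0$ iff there exists $y$ with $|y-x|\leq c^*t$ and $g_\Omega(y)>0$, i.e.\ iff the ball $\bar B_{c^*t}(x)$ meets $\Omega$ (using that $g_\Omega>0$ exactly on $\Omega$ and the monotonicity/shape hypotheses on $g_\Omega$, which guarantee $\sup_{\bar B_{c^*t}(x)} g_\Omega > 0$ precisely when $\dist(x,\Omega)<c^*t$; one must handle the boundary case $\dist(x,\Omega)=c^*t$ carefully, noting $g_\Omega$ vanishes on $\partial\Omega$ and the supremum over a closed ball touching $\bar\Omega$ only at a boundary point is $0$). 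Similarly $w_\Omega(x,t)<0$ iff $\bar B_{c^*t}(x)\cap\bar\Omega=\emptyset$, i.e.\ $\dist(x,\Omega)>c^*t$. This gives both identities in item (\ref{itemchar}).

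For item (\ref{itemMS}): the idea is to compare $w_\Omega$ and $u_\Omega$ using the comparison principle for (\ref{HJ}) (Proposition \ref{prop:prelem} in the appendix). Observe that (\ref{bdonc}) gives $H(\lambda)\leq \lambda^2\theta_M + r$, hence $H$ is finite, and more importantly (\ref{Hc*}) encodes that $c^*$ is the ``effective'' speed; the point of \cite{MS} is that $\max\{u,\partial_t u - H(\partial_x u)\}=0$ has the same zero-level-set dynamics as the eikonal-type equation with speed $c^*$. Concretely, I would show that $\min\{w_\Omega, \partial_t w_\Omega - c^*|\partial_x w_\Omega|\}$ being $\leq 0$ (in fact $=0$ away from where $w_\Omega$ would need constraining) lets $w_\Omega$, suitably truncated at $0$ from above, serve as a subsolution of (\ref{HJ}) with initial data dominated by $u_\Omega(\cdot,0)$. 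Since $w_\Omega(x,0)=g_\Omega(x) \le 0$ outside $\Omega$ and $w_\Omega \le \|g_\Omega\|_\infty$ is bounded while $u_\Omega(x,0)=-\infty$ off $\bar\Omega$, one needs the initial-data comparison for (\ref{HJ}) with infinite data on one side; the truncation $\min\{w_\Omega,0\}$ (or rather the observation that on $\{w_\Omega\geq 0\}$ we want $u_\Omega=0$, and on $\{w_\Omega<0\}$ we want $u_\Omega<0$) is what translates the speed-$c^*$ dynamics into the claimed set inclusions. I expect the cleanest route is: (a) $\{w_\Omega\geq 0\}\subseteq\{u_\Omega=0\}$ by noting $\min\{w_\Omega,0\}$ is a subsolution of (\ref{HJ}) with initial data $\leq u_\Omega(\cdot,0)$, so $\min\{w_\Omega,0\}\leq u_\Omega\leq 0$, forcing $u_\Omega=0$ wherever $w_\Omega\geq 0$; and (b) $\{w_\Omega<0\}\subseteq\{u_\Omega<0\}$, which follows because at points where $w_\Omega<0$, i.e.\ $\dist(x,\Omega)>c^*t$, a barrier argument using $H(\lambda)\geq \theta_m\lambda^2 + r$ and $c^*\geq 2\sqrt{\theta_m r}$ (inequality (\ref{boundsonc*})) shows $u_\Omega$ must be strictly negative — alternatively, one directly uses the Majda-Souganidis characterization of the zero set of the solution of (\ref{HJ}) as $\{d(x,\Omega)\leq c^* t\}$ and reads off both inclusions.

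The main obstacle I anticipate is the careful treatment of the comparison principle when one side has infinite initial data and the Hamiltonian in (\ref{HJ}) is only controlled between two quadratics rather than being exactly $c^*|\lambda|$ — in other words, making precise that the result of \cite{MS} (stated there for a class of Hamiltonians) applies to our specific $H$ and that the constrained equation $\max\{u,\partial_t u - H(\partial_x u)\}=0$ genuinely has propagation speed exactly $c^*$. The bounds (\ref{bdonc}) and (\ref{Hc*})--(\ref{boundsonc*}) from Proposition \ref{prop:spectral} are exactly the inputs needed, but verifying that \cite{MS} covers $H$ (convex, superlinear, with the scaling identity (\ref{Hc*})) and that the appendix's comparison principle is strong enough to handle $u_\Omega$ versus a bounded truncated subsolution is where the real work lies; the rest is bookkeeping with the Hopf-Lax formula.
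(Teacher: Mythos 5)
Your treatment of item (\ref{itemchar}) via the Hopf--Lax (sup-convolution) representation $w_\Omega(x,t)=\sup\{g_\Omega(y):|y-x|\le c^*t\}$ is correct and is a genuinely different route from the paper, which instead runs the method of characteristics following \cite{Barles85} and uses that $w_\Omega$ is constant along characteristics of slope $\pm 1/c^*$. Both arguments use the shape hypotheses on $g_\Omega$ in the same way, and yours is arguably shorter, provided you justify the representation formula for the concave Hamiltonian $p\mapsto -c^*|p|$.

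For item (\ref{itemMS}), however, your primary route has a genuine gap. The truncation $\min\{w_\Omega,0\}$ has initial datum $\min\{g_\Omega,0\}=g_\Omega$ on $\rr\setminus\bar{\Omega}$, which is finite and strictly greater than $u_\Omega(\cdot,0)=-\infty$ there; so the hypothesis ``initial data $\le u_\Omega(\cdot,0)$'' that you need in order to invoke the comparison principle of Proposition \ref{prop:prelem} fails exactly where it matters, and the inequality $\min\{w_\Omega,0\}\le u_\Omega$ cannot be obtained by a naive comparison. (A secondary issue: the pointwise minimum of two subsolutions is not in general a viscosity subsolution, so even the subsolution property of $\min\{w_\Omega,0\}$ requires an argument at points where $w_\Omega=0$.) The barrier you propose for the other inclusion is likewise insufficient: a supersolution built from the quadratic bounds (\ref{bdonc}) propagates at speed $2\sqrt{\theta_M r}$, not at the sharp speed $c^*$ (and the lower bounds $H(\lambda)\ge\theta_m\lambda^2+r$, $c^*\ge 2\sqrt{\theta_m r}$ you invoke point in the wrong direction for bounding $u_\Omega$ from above). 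Capturing the exact speed $c^*$ requires the variational identity (\ref{Hc*}), i.e., optimizing over the one-parameter family of exponential profiles, which is precisely the content of Propositions 2.3 and 2.4 of \cite{MS}. Your fallback --- citing the Majda--Souganidis characterization directly --- is exactly what the paper does (after the dictionary $H=\tilde{H}+r$, $V\equiv 0$, $F(p)=|p|c^*$), and is the correct way to close the argument; but as written, the ``real work'' you defer is the entire content of item (\ref{itemMS}).
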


In \cite{MS}, the authors analyze the properties of the front $\Gamma_t = \bdry \{ x: u (x,t) <0\}$, where $u$ solves an equation of the form (\ref{HJ}). It turns out that, for a general Hamiltonian $H$ that depends on space and time, this front may not be very regular. However,  our situation is rather simple and no degeneracy appears -- indeed, item (\ref{itemMS})  says  that our front is geometric. %In item \ref{itemchar} of \ref{lemfornewcor}, we  even  deduce a precise formula for the location of the front using the results of \cite{MS}, our characterization of $H$ in terms of the function $\cc$, and some basic techniques from the method of characteristics.

\begin{proof}[Proof of Lemma \ref{lemfornewcor}]
Item (\ref{itemMS}) is a special case of Propositions 2.3 and 2.4 of \cite{MS}. Indeed, what we denote $\Omega$, $u_\Omega$, $w_\Omega$ are denoted $G_0$, $Z$, $u$ in \cite{MS}. And, if we use $\tilde{H}$ to denote the Hamiltonian $H$ of \cite{MS}, reserving $H$ itself for our Hamiltonian, then we have the correspondence $H(p) \equiv \tilde{H}(p)+r$ between the two notations. In addition, the velocity field $V$ of \cite{MS} is $0$ in our situation, so the hypotheses of both propositions are satisfied. Finally, we may use (\ref{Hc*}) to find that the general nonlinearity $F$ in line (2.5) of \cite{MS} is, in our situation, simply given by $F(p)=|p|c^*$. (For the benefit of the reader, we also point out two typos in section 2.2 of \cite{MS}. First, ``(1.6)" should read ``(1.9)" throughout that section. Second, ``$u(x,t)=0$" should read ``$u(x,t)\geq 0$" in the conclusion of Proposition 2.3.)

 The proof of item (\ref{itemchar}) is based on the method of characteristics.  The characteristics for equations of the form $w_t+\tilde{F}(Dw)=0$ are analyzed in detail in Barles' \cite[page 17]{Barles85}. Taking $\tilde{F}(p)=- |p|c^*$ in order to study (\ref{eqn:wOm})  yields that the characteristic  emanating from $x_0$ is 
 \[
 x_{x_0}(t)=x_0-t \sign(g'(x_0)) c^*,
 \]
(where $\sign(\alpha) = 1$ if $\alpha>0$, $-1$ if $\alpha<0$, and $0$ if $\alpha=0$),  and,
 \[
 w_\Omega( x_{x_0}(t), t) = g(x_0) \text{ for all }t>0.
 \]
Let us use the properties of $g_\Omega$ to  continue our analysis. We recall that $g_\Omega$ has a maximum at $\bar{x}$, is strictly increasing to the left of $\bar{x}$ and strictly decreasing to the right. Hence, if $x_0$ is to the left of  $\bar{x}$, then the characteristic has constant slope $-1/c^*$ in the $(x,t)$ plane; while, if $x_0$ is to the right of $\bar{x}$, then the slope is $1/c^*$. We remark that  the characteristics never cross. And, $w_\Omega$ is constant along these characteristics, so,  in particular, the sign of the initial condition is preserved along them.

Now, let $(x,t)$ be a point with $d(x,\Omega) > c^*t$. We shall show $w_\Omega(x,t) <0$.  Let us suppose $x$ lies to the right of $\bar{x}$ (the other case is similar). Then $(x,t)$ must  lie on a characteristic with slope $1/c^*$, so that $(x,t)= (x_0+tc^*, t)$ for some $x_0$. We compute,
\[
d(x_0,\Omega) \geq d(x_0+tc^*, \Omega)- tc^*= d(x, \Omega)- tc^* >0,
\]
where the equality follows since $(x,t)= (x_0+tc^*, t)$, and the second inequality since $d(x,\Omega) > c^*t$.  Since $x_0$ lies outside $\Omega$, $g_\Omega$ is negative there. Because $w$ is constant along characterisics, we conclude $w_\Omega(x,t) = g_\Omega(x_0) <0$. We omit the similar arguments that establish the remainder of the proposition.
\end{proof}

We now proceed with:
\begin{proof}[Proof of Corollary \ref{corc*informal}]
Let $u_J$ and  $u_K$ be as in Lemma \ref{lem intro} and let $w_J$ and $w_K$ be as in Lemma \ref{lemfornewcor}. Combining the two parts of Lemma \ref{lem intro} yields,
\[
\{(x,t): d(x, J) > c^*t\} \subseteq \{(x,t): u_J(x,t) <0\} \text{ and } \{(x,t): d(x, K) < c^*t\} \subseteq \{(x,t): u_K(x,t) =0\}. 
\]
On the other hand, Theorem \ref{result on n} implies,
\begin{itemize}
\item If  $u_J(x,t) <0$, then $\displaystyle \lim_{\ep\rightarrow 0} n^\ep(x,\theta, t) =0$ for all $\theta\in \Theta$.
\item If $(x,t)$ is in the interior of $\{u_K(x,t) =0\}$, then  $\displaystyle \limsup_{\ep\rightarrow 0}{ }^* \rho^\ep(x,t)\geq 1$.
\end{itemize}
Combining this with the previous line yields the  Corollary. %The second part is simply a special case of the first and follows easily.
\end{proof}

We now use Corollary \ref{corc*informal} and the bounds (\ref{boundsonc*}) on $c^*$ to present:
\begin{proof}[Proof of Corollary \ref{cor:onn}]
To establish the first statement of the corollary, let  $(x,t)$ be such that $\dist(x, J)>2t\sqrt{\theta_M r}$. According to  the second inequality in (\ref{boundsonc*}), we also have $\dist(x, J)>tc^*$. Using Corollary \ref{corc*informal} completes the proof of the first statement; the proof of the second is analogous.
\end{proof}

\appendix
\section{}
\label{Appendix}

In this appendix, we state an existence result and  a comparison principle for (\ref{HJ}) with infinite initial data (Proposition \ref{prop:prelem}).  These were established in \cite[Section 4]{BarlesEvansSouganidis} but not explicitely stated there, and we could not locate another reference in the literature. Because of this, we  carefully explain how to obtain them from \cite[Section 4]{BarlesEvansSouganidis}.  %Similarly, because we could not locate in the literature the proof of the exact existence result we need, Lemma \ref{lem intro}, we sketch its proof here.
In addition to \cite{EvansSouganidis, BarlesEvansSouganidis}, we also refer the reader to Crandall, Lions and Souganidis \cite{CLS} for more about Hamilton-Jacobi equations with infinite initial data. 
\begin{prop}
\label{prop:prelem}
There exists  a viscosity solution $w$ of (\ref{HJ}) on $\rr\times (0, \infty)$ with with initial data (\ref{infinite}). Moreover, suppose $\underline{w}$ and $\bar{w}$ are, respectively,  a viscosity subsolution and a viscosity supersolution of (\ref{HJ}) on $\rr\times (0, \infty)$, and  both with  the initial data (\ref{infinite}). Then we have
\begin{equation}
\label{ws}
\underline{w}\leq w\leq \bar{w}.
\end{equation}
In particular, the solution to (\ref{HJ}) with initial data (\ref{infinite}) is unique.
\end{prop}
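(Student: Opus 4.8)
The plan is to obtain both statements from the arguments of \cite[Section~4]{BarlesEvansSouganidis}, which treat exactly this kind of constrained Hamilton--Jacobi equation with $\{0,-\infty\}$ initial data; the only thing I would really need to check is that our Hamiltonian lies in the class those arguments require. By Proposition~\ref{prop:spectral}, $H$ is continuous and convex, and---this is the crucial point---satisfies the two-sided quadratic bound (\ref{bdonc}), $\lambda^2\theta_m+r\le H(\lambda)\le\lambda^2\theta_M+r$. The coercivity (super-linear growth) that (\ref{bdonc}) provides is precisely what powers the barrier constructions and the comparison argument in \cite[Section~4]{BarlesEvansSouganidis}, and the positivity $H\ge r>0$ makes the obstacle $\max\{u,u_t-H(u_x)\}=0$ behave as there. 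So the first step would be to record this hypothesis check, after which all the lemmas of that section are available.

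For existence I would build $w$ as a monotone limit. Pick continuous $g_n\colon\rr\to(-\infty,0]$ with $g_n\equiv 0$ on $\bar\Omega$, nonincreasing in $n$, and $g_n\to-\infty$ locally uniformly on $\rr\setminus\bar\Omega$ (so $g_n$ decreases pointwise to the right side of (\ref{infinite})); let $w_n$ be the unique, continuous, nonpositive viscosity solution of (\ref{HJ}) with $w_n(\cdot,0)=g_n$. The standard comparison principle for (\ref{HJ}) with bounded continuous data gives that $w_n$ is nonincreasing in $n$, so I would set $w:=\inf_n w_n=\lim_n w_n$. Stability under decreasing limits gives that $w$ is a viscosity supersolution of (\ref{HJ}); for the subsolution property one first needs $w$ to be locally finite, which I would get from a lower barrier---comparing $w_n$ from below with the solution having data $0$ on a small ball $B\subset\subset\Omega$ and $-\infty$ off $B$, which \cite[Section~4]{BarlesEvansSouganidis} controls and which stays $\ge-\omega(t)$ near $B$ with $\omega(t)\to0$ as $t\to0$. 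This barrier also shows $w(x,0)=0$ for $x\in\Omega$, while $w\le w_n$ and $g_n\to-\infty$ locally uniformly off $\bar\Omega$ give $w(x,0)=-\infty$ there; and $w\le0$ as a decreasing limit of the $w_n\le0$. Hence $w$ solves (\ref{HJ}) with initial data (\ref{infinite}).

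For comparison, let $\underline{w}$ and $\bar{w}$ be a subsolution and a supersolution of (\ref{HJ}) on $\rr\times(0,\infty)$, both with data (\ref{infinite}). The inequality $\underline{w}\le w$ is easy: $\underline{w}^*(\cdot,0)\le 0=g_n$ on $\bar\Omega$ and $\underline{w}^*(\cdot,0)=-\infty\le g_n$ off $\bar\Omega$, so continuous-data comparison gives $\underline{w}\le w_n$ for every $n$, and letting $n\to\infty$ yields $\underline{w}\le w$ (one also has $\underline w\le 0$, since $\underline w$ is a subsolution of the obstacle equation, consistent with $w\le0$). The inequality $w\le\bar{w}$ is where the real work lies. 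Here I would use that $w$ is itself a subsolution of (\ref{HJ}) whose relaxed initial trace is exactly (\ref{infinite}), so that $w^*(\cdot,0)\le\bar{w}_*(\cdot,0)$ holds pointwise---on $\Omega$ because $\bar w_*(\cdot,0)\ge0$, and off $\bar\Omega$ because the left side is $-\infty$---and then invoke the comparison theorem of \cite[Section~4]{BarlesEvansSouganidis} (see also \cite{CLS}): a doubling-of-variables argument with a penalization handling the boundary $\{t=0\}$, in which the finite speed of propagation coming from (\ref{bdonc}) ensures that only the ordering of the initial data on compact sets matters and the $-\infty$ values of $w$'s data are harmless. Equivalently, one can first prove $u_{\Omega'}\le\bar w$ for every $\Omega'\subset\subset\Omega$---for which $u_{\Omega'}^*(\cdot,0)\le\bar w_*(\cdot,0)$ is immediate since $\bar w_*(\cdot,0)\ge0$ on $\Omega\supseteq\bar{\Omega'}$---and then let $\Omega'\uparrow\Omega$, using $\sup_{\Omega'\subset\subset\Omega}u_{\Omega'}=w$. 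Combining the two inequalities gives $\underline{w}\le w\le\bar{w}$, and uniqueness is the case $\underline{w}=\bar{w}$.

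I expect the second comparison inequality $w\le\bar{w}$ to be the main obstacle: unlike $\underline{w}\le w$, it cannot be reduced to the continuous-data theorem by approximating $\bar{w}$'s data from below (the approximation breaks down near $\partial\Omega$), so one must reproduce the $t=0$ analysis of \cite[Section~4]{BarlesEvansSouganidis} and verify line by line that the estimates survive with our Hamiltonian---which is exactly where (\ref{bdonc}) enters. Everything else (the hypothesis check, the decreasing-limit construction and its barriers, and the $\underline{w}\le w$ direction) should be routine.
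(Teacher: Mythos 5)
Your proposal is sound and draws on the right source, but it takes a more laborious route than the paper. The paper's proof is a pure translation: it observes that under the substitution $u\mapsto -u$ (so that one studies $\min\{u,\,u_t+H(-u_x)\}=0$ with data $0$ on $\Omega$ and $+\infty$ off $\bar{\Omega}$), and with $\tilde{H}(p):=H(-p)$, $G_0:=\Omega$, the proposition \emph{is} the statement established in \cite[Section 4]{BarlesEvansSouganidis} --- one identifies $I=-w$, $v_*=-\underline{w}$, $v^*=-\bar{w}$ and reads off $v^*\leq I\leq v_*$ from lines (4.2) and (4.5) there. No reconstruction of the existence or comparison machinery is needed, only the sign bookkeeping. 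Your version instead rebuilds that machinery: a monotone approximation $g_n\downarrow$ of the singular data for existence, barriers for local finiteness and the initial trace, and a two-step comparison. That reconstruction is essentially how \cite{BarlesEvansSouganidis} proceeds internally, so nothing in it is wrong in spirit, but several of your intermediate assertions are themselves nontrivial and would have to be extracted from the same reference rather than taken for granted: the subsolution property of $w=\inf_n w_n$ under a decreasing limit (the half-relaxed limit $\limsup^*$ is what is a subsolution, and identifying it with $w$ requires the barrier argument you sketch), the identity $\sup_{\Omega'\subset\subset\Omega}u_{\Omega'}=w$, and above all the $t=0$ comparison with $-\infty$ data, which you correctly flag as the crux. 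So the net effect of your approach is to re-prove the cited result rather than to cite it; both are legitimate, but the paper's observation that the proposition is literally a restatement (after reflection) is the more economical proof, and your hypothesis check on $H$ (continuity, convexity, the bound (\ref{bdonc})) is the only part of your write-up that the paper's translation argument also implicitly requires.
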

\begin{proof}
We explain why Proposition \ref{prop:prelem} is exactly what was proven in \cite[Section 4]{BarlesEvansSouganidis}. Since both we and  \cite{BarlesEvansSouganidis} use $H$ as notation for the Hamiltonian, for the purposes of this proof we use  $\tilde{H}$ to denote the Hamiltonian of  \cite{BarlesEvansSouganidis}. 

%First, we point out that $\tilde{H}$ is assumed to be convex, but this is not used in the arguments of \cite[Section 4]{BarlesEvansSouganidis}.  
There is a difference in sign between our paper and \cite[Section 4]{BarlesEvansSouganidis}, which we now address. We have that $-w$, $-\underline{w}$ and $-\bar{w}$ are, respectively, a viscosity solution, supersolution, and subsolution of 
\[
\min\{ u, u_t +H(-u_x)\}=0
\]
with initial data
\begin{equation*}
\begin{cases}
0 &\text{ for } x\in \Omega\\
\infty &\text{ for } x \in \rr\setminus \bar{\Omega}.
\end{cases}
\end{equation*}
Thus we see that, if we take $\tilde{H}(p)=H(-p)$ and $G_0=\Omega$, then we are exactly in the situation of \cite[Section 4]{BarlesEvansSouganidis}. We may take $I=-w$, $v_*=-\underline{w}$, and $v^*=-\bar{w}$. Thus we've established the existence of a solution. Plus, the conclusion of the comparison of the arguments in \cite[Section 4]{BarlesEvansSouganidis} (specifically, lines (4.2) and (4.5)) is
\[
v^*\leq I\leq v_*.
\]
Translating back to our notation, we see (\ref{ws}) holds. 
\end{proof}

\begin{comment}
Proposition \ref{prop:prelem} implies that there exists \emph{at most one} viscosity solution to (\ref{HJ}) with infinite initial data (\ref{infinite}). We briefly explain why there \emph{exists} a solution to (\ref{HJ}) with infinite initial data (\ref{infinite}), as we could not find this exact result in the literature (in  \cite{EvansSouganidis, BarlesEvansSouganidis} the convex structure of the Hamiltonian is used to establish existence). Hence we provide:
\begin{proof}[Sketch of proof of Lemma \ref{lem intro}] Let $\zeta$ be a cutoff function satisfying (\ref{cutoff}) but with $\Omega$ instead of $J$. Then (\ref{HJ}) with initial data $-\mu\zeta(x)$ has a unique solution $u_{\mu}$, according to Ishii's \cite[Theorem 1]{Ishii1986} (we use this particular result because it applies to equations set in the entire space $\rr\times (0,\infty)$). 

We define $u_*$ and $u^*$ as, respectively, the half relaxed limits $\liminf_*$ and $\limsup^*$ as $\mu\rightarrow \infty$ of the $u_\mu$. By definition, we have $u_*\leq u^*$. 

According to standard stability results for viscosity solutions (see, for example \cite[Section 6]{User's guide}), we have that  $u_*$ and  $u^*$  are, respectively, a viscosity super- and  sub- solution of (\ref{HJ}) with initial data (\ref{infinite}). So, by Proposition \ref{prop:prelem} we obtain with $u_*\geq u^*$. But $u_*$ and  $u^*$ also satisfy the opposite inequality, hence they agree and equal the unique viscosity solution of (\ref{HJ}) with initial data (\ref{infinite}).
\end{proof}
\end{comment}

\section*{Acknowledgements} 
The author thanks her thesis advisor,  Takis Souganidis, for  his guidance and encouragement. 
The author is grateful to Vincent Calvez and Sepideh Mirrahimi for   reading earlier drafts of this paper very thoroughly. Their remarks were invaluable. 
In addition, the author thanks Benoit Perthame for a stimulating discussion that led to the developement of Corollary \ref{corc*informal}. 
The author also thanks the anonymous referees. Their comments have greatly helped improve the exposition   and  raised several interesting questions; for example,  Remarks \ref{remark: relationship} and \ref{remark:choice of nonlinearity} were motivated by their reports.

\end{document}